\titleformat{\section}[block]
 {\bfseries}
 {\thesection.}
 {\fontdimen2\font}
 {}
\newcommand{\periodafter}[1]{#1.}
\titleformat{\subsection}[runin]
 {\bfseries}
 {\thesubsection.}
 {\fontdimen2\font}
 {\periodafter}
\setlist{noitemsep}
\newtheorem{theorem}{Theorem}[section]
\newtheorem{corollary}[theorem]{Corollary}
\newtheorem{proposition}[theorem]{Proposition}
\newtheorem{lemma}[theorem]{Lemma}
\theoremstyle{definition}
\newtheorem{question}{Question}
\DeclareMathOperator{\cut}{ct}
\DeclareMathOperator{\noncut}{nct}
\DeclareMathAlphabet{\mathpzc}{OT1}{pzc}{m}{it}
\DeclareMathOperator{\sel}{\mathpzc{V\mkern-5mu_{cs}}}
\renewcommand{\emptyset}{\varnothing}
\numberwithin{equation}{section}
\begin{document}

\author{Valentin Gutev}
  \address{Institute of Mathematics and Informatics, Bulgarian Academy of Sciences,
Acad. G. Bonchev Street, Block 8, 1113 Sofia, Bulgaria}

\email{\href{mailto:gutev@math.bas.bg}{gutev@math.bas.bg}}

\subjclass[2010]{54A20, 54B20, 54C65}

\keywords{Vietoris topology, continuous selection, cut point,
  butterfly point.}

\title{Butterfly Points and Hyperspace Selections}
\begin{abstract}
  If $f$ is a continuous selection for the Vietoris hyperspace
  $\mathscr{F}(X)$ of the nonempty closed subsets of a space $X$, then
  the point $f(X)\in X$ is not as arbitrary as it might seem at first
  glance. In this paper, we will characterise these points by local
  properties at them. Briefly, we will show that $p=f(X)$ is a strong
  butterfly point precisely when it has a countable clopen base in
  $\overline{U}$ for some open set $U\subset X\setminus\{p\}$ with
  $\overline{U}=U\cup\{p\}$.  Moreover, the same is valid when $X$ is
  totally disconnected at $p=f(X)$ and $p$ is only assumed to be a
  butterfly point. This gives the complete affirmative solution to a
  question raised previously by the author. Finally, when $p=f(X)$
  lacks the above local base-like property, we will show that
  $\mathscr{F}(X)$ has a continuous selection $h$ with the stronger
  property that $h(S)=p$ for every closed $S\subset X$ with $p\in S$.
\end{abstract}

\date{\today}
%\date{}
\maketitle

\section{Introduction}

All spaces in this paper are infinite Hausdorff topological
spaces. Let $\mathscr{F}(X)$ be the set of all nonempty closed subsets
of a space $X$. We endow $\mathscr{F}(X)$ with the \emph{Vietoris
  topology} $\tau_V$, and call it the \emph{Vietoris hyperspace} of
$X$. Let us recall that $\tau_V$ is generated by all collections of
the form
\[
\langle\mathscr{V}\rangle = \left\{S\in \mathscr{F}(X) : S\subset
  \bigcup \mathscr{V}\ \ \text{and}\ \ S\cap V\neq \emptyset,\
  \hbox{whenever}\ V\in \mathscr{V}\right\},
\]
where $\mathscr{V}$ runs over the finite families of open subsets of
$X$. A map $f:\mathscr{F}(X)\to X$ is a \emph{selection} for
$\mathscr{F}(X)$ if $f(S)\in S$ for every $S\in\mathscr{F}(X)$, and
$f$ is \emph{continuous} if it is continuous with respect to the
Vietoris topology on $\mathscr{F}(X)$. The set of all continuous
selections for $\mathscr{F}(X)$ will be denoted by
$\sel[\mathscr{F}(X)]$.\medskip

The following question was posed by the author in an
unpublished note.

\begin{question}
  \label{question-rem-obser-v4:3}
  Let $\omega_1$ be the first uncountable ordinal and
  $X=(\omega_1+1)\cup_{\omega_1=0}[0,1]$ be the adjunction space
  obtained by identifying $\omega_1$ and $0\in[0,1]$
  into a single point $p\in X$. Does there exist a continuous
  selection $f:\mathscr{F}(X)\to X$ with $f(X)=p$?
\end{question}

David Buhagiar has recently proposed, in a private communication to
the author, a negative solution to this question. However, his
arguments were heavily dependent on stationary sets in $\omega_1$ and
the pressing down lemma.\medskip

In this paper, we will give a purely topological description of the
points $p\in X$ with the property that ${p=f(X)}$ for some
$f\in \sel[\mathscr{F}(X)]$. To this end, let us recall that a point
$p\in X$ of a connected space $X$ is \emph{cut} if $X\setminus \{p\}$
is not connected or, equivalently, if $X\setminus\{p\}=U\cup V$ for
some subsets $U,V\subset X$ with
$\overline{U}\cap \overline{V}=\{p\}$. Extending this interpretation
to an arbitrary space $X$, a point $p\in X$ was said to be \emph{cut}
\cite{gutev-nogura:03a}, see also \cite{gutev:00e,gutev-nogura:00d},
if $X\setminus\{p\}=U\cup V$ and $\overline{U}\cap \overline{V}=\{p\}$
for some subsets $U,V\subset X$. Cut points were also introduced in
\cite{Dow2008}, where they were called \emph{tie-points}. A somewhat
related concept was introduced in \cite{gutev:05a}, where $p\in X$ was
called \emph{countably-approachable} if it is either isolated or has a
countable clopen base in $\overline{U}$ for some open set
$U\subset X\setminus\{p\}$ with $\overline{U}=U\cup\{p\}$. In these
terms, we consider the following two subsets of $X$:
\begin{equation}
  \label{eq:HSAP-v20:2}
  \begin{cases}
    X_\Theta=\left\{f(X): f\in \sel[\mathscr{F}(X)]\right\}, \\
    X_\Omega=\{p\in X: p\ \text{is countably-approachable}\}.
  \end{cases}
\end{equation}
Intuitively, $X_\Theta$ can be regarded as the $X$-`Orbit' with
respect to the `action' of $\sel[\mathscr{F}(X)]$ on the hyperspace
$\mathscr{F}(X)$.  The non-isolated countably-approachable points were
called \emph{$\omega$-approachable} in \cite{gutev:05a}, so $X_\Omega$
is also intuitive. \medskip

Each non-isolated countably-approachable point $p\in X$ is a cut point
of $X$, see the proof of \cite[Corollary 3.2]{gutev-nogura:03a}. Going
back to the adjunction space $X$ in
Question~\ref{question-rem-obser-v4:3}, it is evident that the point
$p\in X$ is cut, but it is not countably-approachable (i.e.\
$p\notin X_\Omega$). The reason for the negative answer to this
question (i.e.\ for the fact that $p\notin X_\Theta$) is now fully
explained by our first main result.

\begin{theorem}
  \label{theorem-HSAP-v7:1}
  Let $X$ be a space with $\sel[\mathscr{F}(X)]\neq\emptyset$. Then
  \begin{equation}
    \label{eq:HSAP-v20:4}
    X_\Omega=\{p\in X_\Theta: p\ \text{is either isolated or cut}\}.
  \end{equation}
\end{theorem}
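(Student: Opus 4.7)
The plan is to prove the two inclusions in~\eqref{eq:HSAP-v20:4} separately.

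For the forward inclusion $X_\Omega\subseteq\{p\in X_\Theta:p\text{ isolated or cut}\}$, the fact that a countably-approachable $p$ is isolated or cut is already on record via the cited \cite[Corollary~3.2]{gutev-nogura:03a}. The remaining content is to produce $f\in\sel[\mathscr{F}(X)]$ with $f(X)=p$; the isolated case is trivial, so I would focus on the non-isolated one. Fix a decreasing countable clopen base $B_1\supset B_2\supset\cdots$ at $p$ in $\overline{U}=U\cup\{p\}$ with $\bigcap_nB_n=\{p\}$, and pass to the pairwise disjoint clopen ``annuli'' $A_n=B_n\setminus B_{n+1}\subset U$, each of which is clopen in $X$ since $U$ is open. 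Picking $a_n\in A_n$, define $f(S)=a_n$ on closed sets $S$ with $p\in S\subset\overline{U}$ and $n$ minimal subject to $S\cap A_n\neq\emptyset$ (and $f(\{p\})=p$), and extend to the remaining $S$ by patching with a fixed $g\in\sel[\mathscr{F}(X)]$ along the clopen decomposition. Continuity is forced by the clopen structure of the $A_n$'s.

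For the reverse inclusion, let $f\in\sel[\mathscr{F}(X)]$ satisfy $p=f(X)$ and assume $p$ is cut, the isolated case being trivial; write $X\setminus\{p\}=U\cup V$ with $\overline{U}\cap\overline{V}=\{p\}$. Extracting a countable clopen base at $p$ in $\overline{U}$ would proceed by picking a decreasing sequence $(W_n)$ of open neighborhoods of $p$, using the Vietoris continuity of $f$ at $X$ to find finite open covers $\mathscr{V}_n$ of $X$ with $f(\langle\mathscr{V}_n\rangle)\subset W_n$, and then carving out of each $\mathscr{V}_n$ a clopen neighborhood $B_n\subset\overline{U}$ of $p$, guided by how the members of $\mathscr{V}_n$ distribute themselves across the two sides of the cut.

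The principal obstacle is the construction of these $B_n$. Clopenness in $\overline{U}$ should be forced by the disjointness $\overline{U}\cap\overline{V}=\{p\}$, which allows the $\overline{V}$-contribution of $\mathscr{V}_n$ to be neutralized on $\overline{U}$ after suitable refinement; the base property $\bigcap_nB_n=\{p\}$ is the deeper step, where for each $x\in\overline{U}\setminus\{p\}$ one must test $f$ against a carefully chosen closed set $S$ containing $x$ and lying in $\langle\mathscr{V}_n\rangle$, invoking both the selection property $f(S)\in S$ and the continuity-driven bound $f(S)\in W_n$ to push $x$ out of $B_n$ for some $n$. This three-way interaction --- Vietoris continuity of $f$, the selection axiom $f(S)\in S$, and the cut structure at $p$ --- is the heart of the argument.
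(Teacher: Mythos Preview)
Your forward-inclusion sketch has a slip: your formula assigns $a_n$ (for the minimal $n$) to any $S\ni p$ contained in $\overline{U}$ that meets an annulus, and defers to a fixed $g$ otherwise; neither branch produces $f(X)=p$, since $X\not\subset\overline{U}$ in general and $g(X)$ is arbitrary. Since this direction is already on record you can simply invoke \cite[Lemma~4.2]{gutev:05a} rather than reconstruct it.

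The reverse inclusion is where the substance lies, and here your plan has a genuine gap. You propose to carve clopen neighbourhoods $B_n\subset\overline{U}$ of $p$ directly out of finite Vietoris covers $\mathscr{V}_n$, but nothing in a finite open cover of $X$ manufactures a clopen set in $\overline{U}$: the cut equation $\overline{U}\cap\overline{V}=\{p\}$ separates the two sides without creating any clopen structure inside either. The paper's route is quite different and, as far as I can see, not bypassable. One first proves (Theorem~\ref{theorem-HSAP-v7:2}, via Propositions~\ref{proposition-Sequences-v1:1} and~\ref{proposition-Sequences-v1:2}) that $p$ is the limit of a nontrivial sequence in $X\setminus\{p\}$, and then uses the weak-selection order $\leq_f$ together with Proposition~\ref{proposition-Sequences-v3:2} to convert that sequence into a countable descending family of clopen sets on one $\leq_f$-side of $p$; only then does one have the raw material to feed into the key technical Lemma~\ref{lemma-Sequences-v3:1}. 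Moreover, the argument must branch on whether $\mathscr{C}[p]=\{p\}$: when the component is nontrivial (Section~\ref{sec:appr-nontr-comp}) one side of any $p$-cut contains a nondegenerate connected set, and the paper has to reduce to the totally disconnected case by passing to a subspace $Y\cup Z$ with $Z\subset\mathscr{C}[p]$ connected (Proposition~\ref{proposition-HSAP-v23:1}) before the clopen machinery applies. None of these ingredients---the convergent sequence, the order-interval argument, the component dichotomy---appears in your outline, and the ``testing $f$ against a carefully chosen $S$'' step, as you describe it, does not supply them.
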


Regarding the proper place of Theorem \ref{theorem-HSAP-v7:1}, let us
remark that the inclusion $X_\Omega\subset X_\Theta$ was actually
established in \cite[Lemma 4.2]{gutev:05a}, see also \cite[Lemma
2.3]{gutev-nogura:00b}. It is included in \eqref{eq:HSAP-v20:4} to
emphasise on the equality. The other inclusion is naturally related to
the so called butterfly points.  A point $p\in X$ is called
\emph{butterfly} (or, a \emph{b-point}) \cite{Sapirovskii1975} if
$\overline{F\setminus\{p\}}\cap \overline{G\setminus\{p\}}=\{p\}$ for
some closed sets $F,G\subset X$. Evidently, $p\in X$ is butterfly
precisely when it is a cut point of some closed subset of $X$. In
fact, in some sources, cut points (equivalently, tie-points) are often
called \emph{strong butterfly points}. Finally, let us agree that a
space $X$ is \emph{totally disconnected at $p\in X$} if the singleton
$\{p\}$ is an intersection of clopen sets.\medskip

A crucial role in the proof of Theorem \ref{theorem-HSAP-v7:1} will
play the following result. 

\begin{theorem}
  \label{theorem-HSAP-v7:2}
  A point $p\in X_\Theta$ is butterfly if and only if it is the limit
  of a sequence of points of $X\setminus\{p\}$. Moreover, if $X$ is
  totally disconnected at a butterfly point $p\in X_\Theta$, then $p$
  is also a cut point of $X$.
\end{theorem}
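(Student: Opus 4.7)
The plan is to prove the two assertions separately.

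The ``if'' direction of the first assertion is routine. Given $x_n\in X\setminus\{p\}$ with $x_n\to p$, I pass to a subsequence of pairwise distinct points (possible since $X$ is Hausdorff and all $x_n\neq p$). The sets $F=\{p\}\cup\{x_{2n}:n\ge 1\}$ and $G=\{p\}\cup\{x_{2n-1}:n\ge 1\}$ are then closed in $X$ (each is a convergent sequence with its limit in a Hausdorff space), meet only at $p$, and satisfy $\overline{F\setminus\{p\}}=F$ and $\overline{G\setminus\{p\}}=G$; hence $\overline{F\setminus\{p\}}\cap\overline{G\setminus\{p\}}=\{p\}$ and $p$ is butterfly.

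For the ``only if'' direction, fix $f\in\sel[\mathscr{F}(X)]$ with $f(X)=p$ and closed $F,G\subset X$ with $\overline{F\setminus\{p\}}\cap\overline{G\setminus\{p\}}=\{p\}$. The idea is to exploit the Vietoris continuity of $f$ at $X$ together with the two-sidedness at $p$ provided by the butterfly structure. Recursively, I plan to construct a decreasing chain $X=S_0\supset S_1\supset S_2\supset\cdots$ of closed subsets, each containing $p$, together with points $x_n\in S_n\setminus\{p\}$, in such a way that $x_n\to p$. At stage $n$, using continuity of $f$ at $S_n$ I extract a finite open cover of $S_n$ whose associated basic Vietoris neighborhood maps into a chosen small open neighborhood of $f(S_n)$; using the disjointness of the two arms $\overline{F\setminus\{p\}}$ and $\overline{G\setminus\{p\}}$ away from $p$, I then pick $x_n$ in one of these arms (alternating the choice with each stage) and cut out from $S_n$ a relatively open neighborhood of $x_n$ whose closure is kept away from $p$, yielding $S_{n+1}$. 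The main obstacle is verifying that this inductive scheme really produces a sequence rather than only a net; the disjointness of the two arms is what permits an alternating, countably-indexed choice of $x_n$'s, and without it the claim fails (e.g.\ at $p=\omega_1$ in $X=\omega_1+1$, where $p\in X_\Theta$ via the max selection but $p$ is not a sequential limit, consistently with the fact that $p$ is not butterfly there).

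For the second assertion, I first invoke the already-established first assertion to obtain a sequence $x_n\to p$ with $x_n\in X\setminus\{p\}$, and by a refinement of the construction above I may arrange that $x_{2n}\in\overline{F\setminus\{p\}}$ and $x_{2n-1}\in\overline{G\setminus\{p\}}$. Total disconnectedness at $p$ then provides, for each $n$, a clopen $C_n\subset X$ with $p\in C_n$ and $x_n\notin C_n$; replacing $C_n$ by $\bigcap_{k\le n}C_k$ I may assume $C_1\supset C_2\supset\cdots$. The idea is to partition $X\setminus\{p\}$ into two sides $U,V$ that respect both the butterfly arms and the clopens $C_n$: the arm $\overline{F\setminus\{p\}}\setminus\{p\}$ goes into $U$, the arm $\overline{G\setminus\{p\}}\setminus\{p\}$ goes into $V$, and the ``third region'' $X\setminus(\overline{F\setminus\{p\}}\cup\overline{G\setminus\{p\}}\cup\{p\})$ is distributed between $U$ and $V$ using the clopens $X\setminus C_n$ (alternating on even/odd indices to match the alternation in the arms). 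Because the $C_n$ are clopen and the two arms are disjoint away from $p$, no point other than $p$ can lie in both $\overline{U}$ and $\overline{V}$; meanwhile, the alternating sequence $x_n\to p$ forces $p\in\overline{U}\cap\overline{V}$, and hence $p$ is cut.
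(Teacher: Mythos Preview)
Your ``if'' direction is fine. The remaining two parts, however, each have a genuine gap.

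For the ``only if'' direction, the recursion you describe never supplies the mechanism that forces $x_n\to p$. You pick $x_n$ as an \emph{arbitrary} point in one of the arms and delete a neighbourhood of it; nothing ties $x_n$ to the selection $f$, and after the first deletion $f(S_1)$ need not equal $p$, so the inductive hypothesis is lost. ``Alternating between the two arms'' does not by itself produce convergence---one could alternate between two fixed points. The paper's argument rests on a dichotomy you are missing: for a local base $\mathscr{B}$ at $p$, either $f((X\setminus B)\cup\{p\})\neq p$ for every $B$ in some base, or $f((X\setminus B)\cup\{p\})=p$ for every $B$ in some base. In the first case (Proposition~\ref{proposition-Sequences-v1:1}) one builds a decreasing $\{B_n\}\subset\mathscr{B}$ with $f\bigl((X\setminus B_n)\cup\overline{B_{n+1}}\bigr)\in X\setminus B_n$ and observes that $(X\setminus B_n)\cup\overline{B_{n+1}}$ is $\tau_V$-convergent to $X$; the butterfly hypothesis is not used at all here. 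Only in the second case (Proposition~\ref{proposition-Sequences-v1:2}) does the butterfly structure enter: continuity of $f$ at $(X\setminus B_n)\cup\{p\}$ yields $x_n$ in the prescribed arm with $f((X\setminus B_n)\cup\{x_n\})=x_n$, the sets $T_n=(X\setminus B_n)\cup\{x_n\}$ are \emph{increasing} (hence $\tau_V$-convergent), so $x_n=f(T_n)$ converges, and alternation forces the limit into $\overline{F\setminus\{p\}}\cap\overline{G\setminus\{p\}}=\{p\}$. Your sketch conflates the two cases and provides neither argument.

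For the second assertion, your clopen sets $C_n$ only separate $p$ from the particular points $x_n$; there is no reason why $\bigcap_n C_n=\{p\}$. For an arbitrary $a\in\overline{F\setminus\{p\}}\setminus\{p\}$ one may have $a\in\bigcap_n C_n$, and then nothing in your distribution of the ``third region'' prevents points of $R$ assigned to $V$ from accumulating at $a$, so $a\in\overline{U}\cap\overline{V}$. The paper does \emph{not} build the cut from the butterfly arms. Instead it uses the continuous weak selection $\sigma=f\uhr\mathscr{F}_2(X)$: the convergent sequence lies in one $\leq_\sigma$-interval $\Delta_p\in\{(\gets,p)_{\leq_\sigma},(p,\to)_{\leq_\sigma}\}$, and a nontrivial external fact (Proposition~\ref{proposition-Sequences-v3:2}, resting on \cite{garcia-ferreira-gutev-nogura-sanchis-tomita:99} and \cite{gutev-nogura:09a}) gives that $p$ is a \emph{countable intersection of clopen subsets of $\overline{\Delta_p}$}. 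This is the key ingredient you lack: it yields decreasing clopens $H_n\subset\overline{\Delta_p}$ with $\bigcap_n H_n=\{p\}$, and the $p$-cut is then $U=\bigcup_n (H_{2n}\setminus H_{2n+1})$, $V=X\setminus\overline{U}$, both of whose closures are controlled because the shells $H_n\setminus H_{n+1}$ are clopen in $\overline{\Delta_p}$ and contained in the open set $\Delta_p$.
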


Evidently, each $p\in X$ which is the limit of a nontrivial sequence
of points of $X$ is also a butterfly point. Thus, the essential
contribution in the first part of Theorem \ref{theorem-HSAP-v7:2} is
that each butterfly point $p\in X_\Theta$ is the limit of a nontrivial
sequence of points $X$. However, butterfly points $p\in X_\Theta$ are
not necessarily cut (i.e.\ strong butterfly). For instance, the
endpoints $0,1\in [0,1]$ are noncut, they belong to $[0,1]_\Theta$ and
are both butterfly. In contrast, according to Theorem
\ref{theorem-HSAP-v7:1}, the second part of Theorem
\ref{theorem-HSAP-v7:2} implies the following consequence which
settles \cite[Problem 4.15]{gutev-2013springer} in the affirmative.

\begin{corollary}
  \label{corollary-HSAP-v7:1}
  If $X$ is totally disconnected at a butterfly point $p\in X_\Theta$,
  then $p\in X_\Omega$ or, equivalently, this point is
  countably-approachable.
\end{corollary}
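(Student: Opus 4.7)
The plan is to obtain Corollary~\ref{corollary-HSAP-v7:1} as a direct two-step consequence of the two main theorems already stated, so essentially all the substantive work has been front-loaded into Theorems~\ref{theorem-HSAP-v7:1} and~\ref{theorem-HSAP-v7:2}.

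Assume the hypotheses of the corollary: $p \in X_\Theta$ is a butterfly point and $X$ is totally disconnected at $p$. First, I would invoke the \emph{moreover} clause of Theorem~\ref{theorem-HSAP-v7:2}, which under exactly these hypotheses asserts that $p$ is a cut point of $X$. Second, having $p \in X_\Theta$ together with $p$ being cut, I would apply the equality \eqref{eq:HSAP-v20:4} of Theorem~\ref{theorem-HSAP-v7:1} to conclude $p \in X_\Omega$. The equivalence with being countably-approachable is then merely unpacking the definition of $X_\Omega$ in \eqref{eq:HSAP-v20:2}.

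The only subtlety worth flagging is that the ``isolated or cut'' dichotomy in \eqref{eq:HSAP-v20:4} is unambiguously resolved by the ``cut'' branch here: a butterfly point cannot be isolated, since its defining condition requires $p \in \overline{F \setminus \{p\}} \cap \overline{G \setminus \{p\}}$, while an isolated $p$ lies in the closure of no subset of $X \setminus \{p\}$. Consequently there is no genuine internal obstacle in this argument; the real hard step is the one already packaged as the moreover part of Theorem~\ref{theorem-HSAP-v7:2}, namely producing a genuine cut decomposition of $X$ at $p$ from the local total disconnectedness plus the butterfly hypothesis.
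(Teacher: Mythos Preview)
Your proposal is correct and follows exactly the route indicated in the paper: the text immediately preceding the corollary states that, by Theorem~\ref{theorem-HSAP-v7:1}, the second part of Theorem~\ref{theorem-HSAP-v7:2} yields Corollary~\ref{corollary-HSAP-v7:1}, which is precisely your two-step argument. Your side remark that a butterfly point cannot be isolated is accurate but not strictly needed, since Theorem~\ref{theorem-HSAP-v7:2} already gives ``cut'' directly.
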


Let us remark that \cite[Problem 4.15]{gutev-2013springer} was stated
for a space $X$ which has a clopen $\pi$-base. A family $\mathscr{P}$
of open subsets of $X$ is a \emph{$\pi$-base} (called also a
\emph{pseudobase}, Oxtoby \cite{oxtoby:60}) if every nonempty open
subset of $X$ contains some nonempty member of $\mathscr{P}$. In our
case, we also have that $\sel[\mathscr{F}(X)]\neq \emptyset$ and
according to \cite[Corollary 2.3]{gutev:05a}, such a space $X$ must be
totally disconnected. \medskip

Our second main result deals with the elements of the set
$X_\Theta\setminus X_\Omega$. To this end, let us recall that a point
$p\in X$ is called \emph{selection maximal} \cite{gutev-nogura:03a},
see also \cite{garcia-ferreira-gutev-nogura-sanchis-tomita:99,
  gutev-nogura:00b}, if there exists a continuous selection $f$ for
$\mathscr{F}(X)$ such that $f(S)=p$ for every $S\in \mathscr{F}(X)$
with $p\in S$. In this case, the selection $f$ is called
\emph{$p$-maximal}. Evidently, each selection maximal point of $X$
belongs to $X_\Theta$, and each point of $X$ which has a
countable clopen base belongs to $X_\Omega$. So, consider the sets:
\begin{equation}
  \label{eq:HSAP-v21:1}
  \begin{cases}
    X_\Theta^*=\left\{p\in X_\Theta: p\ \text{is selection
        maximal}\right\}, \\
    X_\Omega^*=\left\{p\in X_\Omega: p\ \text{has a countable clopen
        base}\right\}.
  \end{cases}
\end{equation}

In this paper, we will also prove the following theorem.

\begin{theorem}
  \label{theorem-HSAP-v7:3}
    Let $X$ be a space with $\sel[\mathscr{F}(X)]\neq\emptyset$. Then
    \begin{equation}
      \label{eq:HSAP-v21:3}
      X_\Theta\setminus X_\Omega\subset X_\Theta^*\quad
      \text{and}\quad X_\Theta^*\cap X_\Omega=X_\Omega^*.
    \end{equation}
\end{theorem}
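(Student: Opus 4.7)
The plan is to prove each of the two assertions in \eqref{eq:HSAP-v21:3} separately, using Theorems \ref{theorem-HSAP-v7:1} and \ref{theorem-HSAP-v7:2} as input.

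For the inclusion $X_\Theta\setminus X_\Omega\subset X_\Theta^*$, fix $p\in X_\Theta\setminus X_\Omega$ and a selection $f\in\sel[\mathscr{F}(X)]$ with $f(X)=p$. By Theorem \ref{theorem-HSAP-v7:1}, $p$ is neither isolated nor a cut point. The plan is to construct a $p$-maximal selection $h$ by declaring $h(S)=p$ for $p\in S$ and $h(S)\in S$ for $p\notin S$ via a suitable modification of $f$. Continuity on the Vietoris-open set $\{S:p\notin S\}$ is automatic; continuity at $S\ni p$ is the content. If it fails, a net $S_\alpha\to S$ with $p\notin S_\alpha$ forces the selected values to accumulate at some $q\in S\setminus\{p\}$. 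Separating $p$ and $q$ by disjoint open sets and exploiting the Vietoris continuity of $f$, I would carve $X\setminus\{p\}$ into two subsets $A,B$ with $\overline A\cap\overline B=\{p\}$, contradicting the non-cutness of $p$.

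For the equality $X_\Theta^*\cap X_\Omega=X_\Omega^*$, the inclusion $\supset$ is the standard construction of a $p$-maximal selection from a decreasing countable clopen base $\{U_n\}_{n\geq 0}$ of $p$ in $X$ with $U_0=X$. Given $g\in\sel[\mathscr{F}(X)]$ and $n(S)=\max\{n:S\cap U_n\neq\emptyset\}$ (finite for $p\notin S$, since $\bigcap U_n=\{p\}\notin S$ together with $S$ closed forces some $U_n$ to miss $S$), the formula
\[
  h(S)=\begin{cases}p,&p\in S,\\ g\bigl(S\cap(U_{n(S)}\setminus U_{n(S)+1})\bigr),&p\notin S,\end{cases}
\]
is $p$-maximal and continuous: continuity on $\{S:p\notin S\}$ follows from the local constancy of $n(\cdot)$ guaranteed by the clopenness of the annuli $U_n\setminus U_{n+1}$, while continuity at $S\ni p$ follows from the fact that $n(S_\alpha)\to\infty$ along every Vietoris-convergent net $S_\alpha\to S$ with $p\notin S_\alpha$. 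For the reverse inclusion $\subset$, let $h$ be a $p$-maximal selection and $\{W_n\}$ a countable clopen base of $p$ in $\overline U=U\cup\{p\}$, with $U\subset X\setminus\{p\}$ open. The task is to promote $\{W_n\}$ to a countable clopen base of $p$ in $X$. I would use $p$-maximality ($h(X\setminus U)=p$) together with Vietoris continuity of $h$ at $X\setminus U$ to obtain, for each $n$, a finite basic Vietoris neighbourhood of $X\setminus U$ whose $h$-image is contained in $W_n$; stitching this data together with the clopenness of $W_n$ in $\overline U$ then produces a clopen subset of $X$ containing $p$ and contained in a prescribed neighbourhood of $p$ in $X$, and refining over $n$ yields the desired clopen base.

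The main obstacle is the first inclusion, where the only structural hypothesis available at $p$ is non-cutness. Distilling a failure of Vietoris continuity into a genuine cut of $X$ at $p$ requires manufacturing both sides of the putative cut from the one-sided net datum $f(S_\alpha)\to q\neq p$, and arranging that the closures of the two pieces meet precisely in $\{p\}$—rather than in some larger set—is the delicate technical heart of the argument.
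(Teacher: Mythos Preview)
Your treatment of the equality $X_\Theta^*\cap X_\Omega=X_\Omega^*$ is fine in spirit and matches what the paper simply cites from \cite{gutev-nogura:03a}; no issue there.

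The inclusion $X_\Theta\setminus X_\Omega\subset X_\Theta^*$ is where your proposal has a genuine gap. You define $h(S)=p$ for $p\in S$ and leave $h(S)$ for $p\notin S$ as ``a suitable modification of $f$'' that you never specify; you then say that a failure of continuity at some $S\ni p$ would let you ``carve $X\setminus\{p\}$ into two subsets $A,B$ with $\overline A\cap\overline B=\{p\}$''. You yourself flag this as the delicate heart of the argument and do not indicate what $A$ and $B$ are, nor why their closures would meet \emph{exactly} in $\{p\}$. Knowing only that some net $f(S_\alpha)$ accumulates at $q\neq p$ gives you one side of a putative cut at best; there is no mechanism in your outline producing the second side, and non-cutness by itself does not force the naive $h$ to be continuous. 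As written, this is a hope rather than a proof.

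The paper's route is quite different and does not attempt the naive $h$ at all. It first proves a structural fact you never mention: for $p\in X_\Theta\setminus X_\Omega$ the component $\mathscr{C}[p]$ has a \emph{clopen base} in $X$ (Theorem~\ref{theorem-Sequences-v13:1}). The argument splits on whether $\mathscr{C}[p]=\{p\}$ or not. In the trivial-component case one shows there is $K\in\mathscr{F}(X)$ with $p\notin K$ and $f(K\cup S)=p$ for every closed $S\ni p$; this comes from the dichotomy in Propositions~\ref{proposition-Sequences-v1:1}--\ref{proposition-Sequences-v1:2} (if no such $K$ existed, $p$ would be the limit of a nontrivial sequence, hence cut by Proposition~\ref{proposition-Sequences-v10:1}, hence in $X_\Omega$). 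A maximal-chain argument then yields a clopen base at $p$, and the $p$-maximal selection is built explicitly on a clopen neighbourhood $H$ of $p$ via $h(S)=f(K\cup(S\cap H))$. The nondegenerate-component case is handled similarly using a finite $K\subset X\setminus\mathscr{C}[p]$ and the order structure on $\mathscr{C}[p]$. The clopen base is what makes the piecewise definition of $h$ continuous; your outline has no substitute for it.
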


Theorem \ref{theorem-HSAP-v7:3} is also partially known, the equality
$X_\Theta^*\cap X_\Omega=X_\Omega^*$ was established in \cite[Theorem
3.1 and Corollary 3.2]{gutev-nogura:03a}. It is included to emphasise
on the fact that $X_\Theta\setminus X_\Omega$ is not necessarily equal
to $X_\Theta^*$. According to Theorem \ref{theorem-HSAP-v7:1}, the set
$X_\Theta\setminus X_\Omega$ cannot contain a cut point of $X$. A
point $p\in X$ which is not cut will be called \emph{noncut}, see
Section \ref{sec:point-maxim-select}. Thus, the inclusion
$X_\Theta\setminus X_\Omega\subset X_\Theta^*$ in
\eqref{eq:HSAP-v21:3} actually states that each noncut point
$p\in X_\Theta$ is selection maximal. The crucial property to achieve
this result is that the connected component of each noncut point
$p\in X_\Theta$ has a clopen base (Theorem
\ref{theorem-Sequences-v13:1}). \medskip

The paper is organised as follows. Theorem \ref{theorem-HSAP-v7:2} is
proved in Section \ref{sec:butt-points-conv}. A condition for a point
$p\in X_\Theta$ to be countably-approachable is given in Lemma
\ref{lemma-Sequences-v3:1} of Section
\ref{sec:count-appr-points}. Based on this condition, the proof of
Theorem \ref{theorem-HSAP-v7:1} is accomplished in Sections
\ref{sec:appr-totally-disc} and \ref{sec:appr-nontr-comp}. The final
Section \ref{sec:point-maxim-select} contains the proof of Theorem
\ref{theorem-HSAP-v7:3}.

\section{Butterfly Points and Convergent Sequences} 
\label{sec:butt-points-conv}

A nonempty subset $S$ of a partially ordered set $(P,\leq)$ is
\emph{up-directed} if for every finite subset $T\subset S$ there
exists $s\in S$ with $t\leq s$ for every $t\in T$.  For a space $X$,
the set $\mathscr{F}(X)$ is partially ordered with respect to the
usual set-theoretic inclusion `$\subset$', and each up-directed family
in $\mathscr{F}(X)$ is $\tau_V$-convergent.
\begin{proposition}
  \label{proposition-G-property-v5:2}
  Each up-directed family $\mathscr{S}\subset \mathscr{F}(X)$ is
  $\tau_V$-convergent to $\overline{\bigcup\mathscr{S}}$.
\end{proposition}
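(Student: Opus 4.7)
The plan is to treat the family $\mathscr{S}$ as a net indexed by itself with the inclusion order, and verify directly that it is eventually in every basic Vietoris neighbourhood of $L=\overline{\bigcup\mathscr{S}}$. Since the sets $\langle\mathscr{V}\rangle$ with $\mathscr{V}$ a finite family of open subsets of $X$ form a base of $\tau_V$, it suffices to fix such a $\mathscr{V}=\{V_1,\dots,V_n\}$ with $L\in\langle\mathscr{V}\rangle$ and produce some $S_0\in\mathscr{S}$ such that $S\in\langle\mathscr{V}\rangle$ whenever $S_0\subset S\in\mathscr{S}$.

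First I would dispose of the ``upper'' condition $S\subset\bigcup\mathscr{V}$. This is automatic for every $S\in\mathscr{S}$, with no use of up-directedness: because $S\subset\bigcup\mathscr{S}\subset L\subset\bigcup\mathscr{V}$, the inclusion holds for all members of the family, not just eventually.

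The ``meets'' condition $S\cap V_i\neq\emptyset$ is where up-directedness is used. From $L\cap V_i\neq\emptyset$ and the openness of $V_i$, combined with $L=\overline{\bigcup\mathscr{S}}$, I get $V_i\cap\bigcup\mathscr{S}\neq\emptyset$, hence some $S_i\in\mathscr{S}$ with $S_i\cap V_i\neq\emptyset$. Applying the up-directed property to the finite set $\{S_1,\dots,S_n\}\subset\mathscr{S}$ produces $S_0\in\mathscr{S}$ with $S_i\subset S_0$ for each $i$, so that $S_0\cap V_i\neq\emptyset$ for all $i$; and the same is inherited by every $S\in\mathscr{S}$ with $S_0\subset S$. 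Combined with the first step, this gives $S\in\langle\mathscr{V}\rangle$ for all such $S$, which is the required eventual containment.

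There is no real obstacle here: the argument is a direct unpacking of the Vietoris base together with the defining property of an up-directed family, and the only mildly delicate point is to make clear that ``$\tau_V$-convergent to $L$'' is to be read in the net-theoretic sense, with $\mathscr{S}$ ordered by $\subset$.
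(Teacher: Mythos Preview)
Your proof is correct and follows essentially the same line as the paper's: fix a basic Vietoris neighbourhood $\langle\mathscr{V}\rangle$ of $\overline{\bigcup\mathscr{S}}$, pick finitely many members of $\mathscr{S}$ witnessing the ``meets'' conditions, and use up-directedness to pass to a single upper bound. The paper compresses your two steps into one by noting that $\bigcup\mathscr{T}\in\langle\mathscr{V}\rangle$ for some finite $\mathscr{T}\subset\mathscr{S}$ and then taking $S\supset\bigcup\mathscr{T}$, but the content is identical.
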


\begin{proof}
  Let $\mathscr{V}$ be a finite family of open subsets of $X$ with
  $\overline{\bigcup\mathscr{S}}\in \langle\mathscr{V}\rangle$. Then
  $\bigcup \mathscr{T}\in \langle\mathscr{V}\rangle$ for some finite
  subfamily $\mathscr{T}\subset \mathscr{S}$. Since $\mathscr{S}$ is
  up-directed, $\bigcup\mathscr{T}\subset S$ for some
  $S\in \mathscr{S}$, and each $S\in \mathscr{S}$ with this property
  also belongs to $\langle\mathscr{V}\rangle$.
\end{proof}

Complementary to Proposition \ref{proposition-G-property-v5:2} is the
following further observation about $\tau_V$-convergence of usual
sequences in the hyperspace $\mathscr{F}(X)$. 

\begin{proposition}
  \label{proposition-G-property-v3:1}
  Let $U_n\subset X$, $n<\omega$, be a pairwise disjoint family of
  proper open sets. Then the sequence $S_n=X\setminus U_n$,
  $n<\omega$, is $\tau_V$-convergent to $X$.
\end{proposition}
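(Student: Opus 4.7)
The plan is to verify directly that $S_n \to X$ in $\tau_V$, by checking the definition on a basic $\tau_V$-neighbourhood of $X$. So fix a finite family $\mathscr{V}$ of open subsets of $X$ with $X \in \langle \mathscr{V} \rangle$; I need an index $N$ such that $S_n \in \langle \mathscr{V} \rangle$ whenever $n \geq N$.

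First I would dispose of the ``covering'' half of membership in $\langle \mathscr{V} \rangle$. The assumption $X \in \langle \mathscr{V} \rangle$ forces $\bigcup \mathscr{V} = X$, so $S_n \subset X = \bigcup \mathscr{V}$ is automatic for every $n < \omega$, and moreover each $V \in \mathscr{V}$ is nonempty (since $V = X \cap V \neq \emptyset$).

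The substantive half is that $S_n \cap V \neq \emptyset$ for every $V \in \mathscr{V}$ and all sufficiently large $n$. Since $S_n \cap V = V \setminus U_n$, this reduces to showing that $V \not\subset U_n$ for large $n$. Here the pairwise disjointness of $\{U_n\}$ comes in: for any nonempty $V$, if $V \subset U_n$ and $V \subset U_m$ with $n \neq m$, then $V \subset U_n \cap U_m = \emptyset$, contradicting nonemptiness of $V$. Hence each $V \in \mathscr{V}$ is contained in at most one $U_{n(V)}$.

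Finally, since $\mathscr{V}$ is finite, the set $F = \{ n(V) : V \in \mathscr{V},\ V \subset U_{n(V)}\}$ is a finite subset of $\omega$. Taking $N = 1 + \max F$ (or $N = 0$ if $F = \emptyset$), we get $V \not\subset U_n$ for every $V \in \mathscr{V}$ whenever $n \geq N$, and so $S_n \in \langle \mathscr{V}\rangle$. Since $\mathscr{V}$ was arbitrary, $S_n \to X$ in $\tau_V$. There is no real obstacle here; the only thing to keep track of is that ``proper'' guarantees each $S_n$ actually lies in $\mathscr{F}(X)$, and that disjointness is used solely to bound how often any fixed $V$ can fall inside some $U_n$.
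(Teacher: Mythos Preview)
Your proof is correct and follows essentially the same approach as the paper's: both verify $\tau_V$-convergence directly on a basic neighbourhood $\langle\mathscr{V}\rangle$ of $X$, observe that the covering condition is automatic, and use pairwise disjointness to see that each nonempty $V\in\mathscr{V}$ can be contained in at most one $U_n$, so finiteness of $\mathscr{V}$ gives the required tail. Your presentation is slightly more explicit (notably in isolating the role of ``proper'' to ensure $S_n\in\mathscr{F}(X)$), but the arguments are the same.
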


\begin{proof}
  Take a finite open cover $\mathscr{V}$ of $X$ with
  $X\in \langle\mathscr{V}\rangle$. If $S_k\cap V_0=\emptyset$ for
  some $V_0\in \mathscr{V}$ and $k<\omega$, then $V_0\subset
  U_k$. Since $\{U_n: n<\omega\}$ is pairwise disjoint, this implies
  that $\emptyset\neq V_0\subset U_k\subset S_n$ for every $n\neq k$.
  Since $\mathscr{V}$ is finite, there exists $n_0<\omega$ such that
  $S_n\cap V\neq \emptyset$ for every $V\in \mathscr{V}$ and
  $n\geq n_0$. In other words, $S_n\in \langle\mathscr{V}\rangle$ for
  every $n\geq n_0$.
\end{proof}

In what follows, for a set $Z$, let
\begin{equation}
  \label{eq:HSAP-v21:4}
  \Sigma(Z)=\{S\subset Z: S\ \text{is nonempty and finite}\}.
\end{equation}
The following two general observations about local bases generating
nontrivial convergent sequences furnish the first part of the proof of
Theorem \ref{theorem-HSAP-v7:2}.

\begin{proposition}
  \label{proposition-Sequences-v1:1}
  Let $p=f(X)$ be a non-isolated point for some
  $f\in \sel[\mathscr{F}(X)]$, and $\mathscr{B}$ be a local base at
  $p$ such that $f((X\setminus B)\cup\{p\})\in X\setminus B$ for every
  $B\in \mathscr{B}$.  Then $X\setminus \{p\}$ contains a sequence
  convergent to $p$.
\end{proposition}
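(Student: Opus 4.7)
My plan is to construct by induction a decreasing chain $B_0\supseteq B_1\supseteq\cdots$ of members of $\mathscr{B}$ and set $x_n=f(S_{B_n})$, where $S_{B_n}=(X\setminus B_n)\cup\{p\}$; the hypothesis then gives $x_n\in X\setminus B_n\subseteq X\setminus\{p\}$ for free, so the only real work is arranging $x_n\to p$.

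For the inductive step, given $B_n$, I apply the continuity of $f$ at the element $X\in\mathscr{F}(X)$: since $f(X)=p$ and $B_n$ is an open neighbourhood of $p$, there exists a finite open cover $\mathscr{V}_{n+1}$ of $X$ with $f(\langle\mathscr{V}_{n+1}\rangle)\subseteq B_n$. I then want $B_{n+1}\in\mathscr{B}$, with $B_{n+1}\subseteq B_n$, such that $S_{B_{n+1}}\in\langle\mathscr{V}_{n+1}\rangle$, which would force $x_{n+1}\in B_n$. This reduces to arranging $V\not\subseteq B_{n+1}$ for each of the finitely many $V\in\mathscr{V}_{n+1}$ with $p\notin V$; for each such $V$ I pick a witness $y_V\in V$ and then, using that $\mathscr{B}$ is a filter base at the $T_1$ point $p$, locate $B_{n+1}\in\mathscr{B}$ inside $B_n$ and avoiding the finite set $\{y_V\}$. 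Then $y_V\in V\cap S_{B_{n+1}}$, so $S_{B_{n+1}}\in\langle\mathscr{V}_{n+1}\rangle$, and continuity gives $x_{n+1}\in B_n$.

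To verify $x_n\to p$, I invoke Proposition~\ref{proposition-G-property-v5:2} on the up-directed family $(S_{B_n})_n$: it $\tau_V$-converges to $\overline{\bigcup_n S_{B_n}}=\overline{(X\setminus\bigcap_n B_n)\cup\{p\}}$, and continuity of $f$ then yields $x_n=f(S_{B_n})\to f(X)=p$ provided this closure equals $X$. Equivalently, the requirement is that $\bigcap_n B_n$ has empty interior in $X$, so that the open set $X\setminus\bigcap_n B_n$ is dense.

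I expect the main obstacle to lie precisely in this last requirement. The bare construction only yields $x_{n+1}\in B_n$, which forces convergence along neighbourhoods of the form $B_n$; to handle every neighbourhood of $p$ one must prevent any nonempty open set $V$ with $p\notin V$ from persisting inside all the $B_n$'s. The inductive step, however, still leaves freedom to exclude at stage $n+1$ any additional prescribed finite set of non-$p$ points from $B_{n+1}$, and by diagonalising against witnesses coming from the continuity of $f$ applied to more and more refined covers, every potential offending $V$ gets a point of itself removed from some $B_n$, so that $V\not\subseteq\bigcap_n B_n$. Carrying out this diagonalisation cleanly, using the hypothesis that $\mathscr{B}$ is a local base and the $T_1$-separation together with continuity of $f$ at $X$, is the technically delicate part of the argument.
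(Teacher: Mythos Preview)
Your construction of the sequence $x_n=f(S_{B_n})$ with $S_{B_n}=(X\setminus B_n)\cup\{p\}$ and the arrangement $x_{n+1}\in B_n$ via continuity of $f$ at $X$ are fine, but the convergence step has a genuine gap. You correctly identify that the up-directed family $(S_{B_n})$ $\tau_V$-converges to $X$ only if $\bigcap_n B_n$ has empty interior, and you propose to achieve this by a diagonalisation that, at each stage, excludes an additional finite set of points from $B_{n+1}$. The trouble is that the space $X$ carries no countability hypothesis: your diagonalisation produces only a \emph{countable} set of excluded points, and there is no reason this set should be dense in $X\setminus\{p\}$. The appeal to ``witnesses coming from the continuity of $f$ applied to more and more refined covers'' does not supply such a mechanism, since each cover is finite and you only see countably many of them. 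In a space where $p$ has uncountable character, a countable decreasing chain $(B_n)$ from $\mathscr{B}$ may well have $\bigcap_n B_n$ with nonempty interior regardless of which finitely many points you throw out at each step.

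The paper's proof sidesteps this obstacle entirely by using a different sequence of closed sets. Instead of $S_{B_n}$, it works with $T_n=(X\setminus B_n)\cup\overline{B_{n+1}}$. The inductive step (using continuity at $(X\setminus B_n)\cup\{p\}$ rather than at $X$, together with Proposition~\ref{proposition-G-property-v5:2} applied to finite subsets of $B_{n+1}$) arranges $f(T_n)\in X\setminus B_n$. The point is that $X\setminus T_n=B_n\setminus\overline{B_{n+1}}$, and these sets are \emph{pairwise disjoint} open sets; Proposition~\ref{proposition-G-property-v3:1} then gives $T_n\to X$ in $\tau_V$ outright, with no requirement on the size of $\bigcap_n B_n$. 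This is the missing idea: convergence to $X$ comes not from up-directedness but from the disjointness of the complements.
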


\begin{proof}
  For $B_0\in \mathscr{B}$ and
  $q=f\left((X\setminus B_0)\cup \{p\}\right)\neq p$, there are
  disjoint open sets $O_p,O_q\subset X$ with $q\in O_q$ and
  $p\in O_p\subset B_0$. Hence, by continuity of $f$, there is a
  finite family $\mathscr{V}$ of open sets of $X$ with
  $(X\setminus B_0)\cup \{p\}\in \langle\mathscr{V}\rangle$ and
  $f(\langle\mathscr{V}\rangle)\subset O_q$. Take $B_1\in \mathscr{B}$
  such that $B_1\subset O_p$ and
  $B_1\subset \bigcap\{V\in \mathscr{V}: p\in V\}$. If
  $S\in \Sigma(B_1)$, then
  $f\left((X\setminus B_0)\cup S\right)\in X\setminus B_0$ because
  $(X\setminus B_0)\cup S\in \langle\mathscr{V}\rangle$ and
  $S\subset O_p\subset X\setminus O_q$.  Since
  $\left\{(X\setminus B_0)\cup S: S\in \Sigma(B_1)\right\}$ is
  up-directed and $\bigcup\Sigma(B_1)=B_1$, by Proposition
  \ref{proposition-G-property-v5:2},
  $f\left((X\setminus B_0)\cup \overline{B_1}\right)\in X\setminus
  B_0$. Thus, by induction, there is a decreasing sequence
  $\{B_n\}\subset \mathscr{B}$ such that
  $f\left((X\setminus B_n)\cup \overline{B_{n+1}}\right)\in X\setminus
  B_n$ for every $n<\omega$. Then $B_n\setminus\overline{B_{n+1}}$,
  $n<\omega$, is a pairwise disjoint family of proper open subsets of
  $X$. Hence, by Proposition \ref{proposition-G-property-v3:1}, the
  sequence $T_n=(X\setminus B_n)\cup \overline{B_{n+1}}$, $n<\omega$,
  is $\tau_V$-convergent to $X$. So, $p=f(X)=\lim_{n\to\infty}f(T_n)$
  and $f(T_n)\notin B_n\ni p$, $n< \omega$.
\end{proof}

\begin{proposition}
  \label{proposition-Sequences-v1:2}
  Let $p=f(X)$ be a butterfly point for some
  $f\in \sel[\mathscr{F}(X)]$, and $\mathscr{B}$ be a local base at
  $p$ such that $f((X\setminus B)\cup\{p\})= p$ for every
  $B\in \mathscr{B}$.  Then $X\setminus \{p\}$ contains a sequence
  convergent to $p$.
\end{proposition}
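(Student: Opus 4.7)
My plan is to imitate the construction in Proposition~\ref{proposition-Sequences-v1:1}; the only new ingredient is the manufacture of a starting witness. There the witness was $q=f((X\setminus B_0)\cup\{p\})\ne p$ for free; here the present hypothesis $f((X\setminus B_0)\cup\{p\})=p$ forces me to replace $p$ by a nearby non-$p$ point.

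Fix $B_0\in\mathscr{B}$. Continuity of $f$ at $(X\setminus B_0)\cup\{p\}$ provides a finite family $\mathscr{V}_0$ of open subsets of $X$ with $(X\setminus B_0)\cup\{p\}\in\langle\mathscr{V}_0\rangle$ and $f(\langle\mathscr{V}_0\rangle)\subset W_0$ for some open $W_0$ with $p\in W_0\subset B_0$. Setting $\Omega_0:=W_0\cap\bigcap\{V\in\mathscr{V}_0:p\in V\}$, this is an open neighbourhood of $p$; because $p$ is non-isolated (a consequence of being butterfly), $\Omega_0\setminus\{p\}\ne\emptyset$, and I pick any $x_0\in\Omega_0\setminus\{p\}$. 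A direct check shows $(X\setminus B_0)\cup\{x_0\}\in\langle\mathscr{V}_0\rangle$; combined with $W_0\cap(X\setminus B_0)=\emptyset$ and $x_0\in W_0$, this forces
\[
 f\bigl((X\setminus B_0)\cup\{x_0\}\bigr)\in W_0\cap\bigl((X\setminus B_0)\cup\{x_0\}\bigr)=\{x_0\}.
\]
Thus $f((X\setminus B_0)\cup\{x_0\})=x_0\ne p$, supplying a witness analogous to $q$ in Proposition~\ref{proposition-Sequences-v1:1}.

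With $x_0$ in hand, the Proposition~\ref{proposition-Sequences-v1:1} template applies. Continuity at $(X\setminus B_0)\cup\{x_0\}$ together with Hausdorff separation of $p$ and $x_0$ produces disjoint open sets $O_p^{(0)}\ni p$ and $O_{x_0}\ni x_0$ with $O_{x_0}\subset B_0$, and a finite family $\mathscr{V}_0'$ with $(X\setminus B_0)\cup\{x_0\}\in\langle\mathscr{V}_0'\rangle$ and $f(\langle\mathscr{V}_0'\rangle)\subset O_{x_0}$. Choosing $B_1\in\mathscr{B}$ inside $O_p^{(0)}\cap\bigcap\{V\in\mathscr{V}_0':p\in V\}$, every $S\in\Sigma(B_1)$ gives $(X\setminus B_0)\cup\{x_0\}\cup S\in\langle\mathscr{V}_0'\rangle$; since $O_{x_0}$ is disjoint from both $X\setminus B_0$ and $S$, this forces $f((X\setminus B_0)\cup\{x_0\}\cup S)=x_0$. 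Proposition~\ref{proposition-G-property-v5:2} applied to the resulting up-directed family then yields $f((X\setminus B_0)\cup\{x_0\}\cup\overline{B_1})=x_0$.

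Iterating, I produce nested $B_n\in\mathscr{B}$ and points $x_n$ such that $T_n:=(X\setminus B_n)\cup\{x_n\}\cup\overline{B_{n+1}}$ satisfies $f(T_n)=x_n$. An extra Hausdorff-separation choice at each stage (using a second non-$p$ point of $B_n$, available since $p$ is non-isolated) ensures $U_n:=B_n\setminus(\overline{B_{n+1}}\cup\{x_n\})$ is a nonempty proper open subset of $X$. The inclusions $U_m\subset B_m\subset\overline{B_{n+1}}$ for $m>n$ and $U_n\cap\overline{B_{n+1}}=\emptyset$ show $\{U_n\}$ is pairwise disjoint, so Proposition~\ref{proposition-G-property-v3:1} gives $T_n\to X$ in $\tau_V$. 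Continuity of $f$ then yields $x_n=f(T_n)\to f(X)=p$, and $\{x_n\}\subset X\setminus\{p\}$ is the desired sequence. The main technicality beyond the template is guaranteeing $U_n\ne\emptyset$, which is handled by the extra Hausdorff choice.
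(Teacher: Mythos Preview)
There is a genuine gap, and in fact your argument cannot be repaired along these lines, because you use the butterfly hypothesis only to get that $p$ is non-isolated. If that sufficed, then combined with Proposition~\ref{proposition-Sequences-v1:1} you would have shown that \emph{every} non-isolated $p=f(X)$ is the limit of a nontrivial sequence. But take $X=\omega_1+1$ with the $\max$ selection: then $p=\omega_1=f(X)$ is non-isolated, $f((X\setminus B)\cup\{p\})=p$ for every basic $B=(\alpha,\omega_1]$, yet no sequence in $X\setminus\{p\}$ converges to $p$.

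The precise failure is in your claim that $(X\setminus B_0)\cup\{x_0\}\cup S\in\langle\mathscr{V}_0'\rangle$ for $S\in\Sigma(B_1)$. In the template of Proposition~\ref{proposition-Sequences-v1:1} this works because the anchor set $(X\setminus B_0)\cup\{p\}$ \emph{contains} $p$, so at least one $V\in\mathscr{V}$ must contain $p$, and $B_1\subset\bigcap\{V:p\in V\}$ forces $S\subset\bigcup\mathscr{V}$. Your anchor set $(X\setminus B_0)\cup\{x_0\}$ does \emph{not} contain $p$; hence the basic Vietoris neighbourhood $\langle\mathscr{V}_0'\rangle$ on which $f$ is trapped in $O_{x_0}$ need not have $p\in\bigcup\mathscr{V}_0'$ at all. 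In that case $\{V\in\mathscr{V}_0':p\in V\}=\emptyset$, the intersection is all of $X$, and the constraint on $B_1$ reduces to $B_1\subset O_p^{(0)}$, which in no way forces $S\subset\bigcup\mathscr{V}_0'$. In the $\omega_1+1$ example one must take $\bigcup\mathscr{V}_0'\subset[0,x_0]$ to ensure $f(\langle\mathscr{V}_0'\rangle)\subset O_{x_0}$, and then any $S\subset(x_0,\omega_1]$ escapes the cover.

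The paper's proof uses the butterfly structure in an essential and quite different way. It does not try to push $T_n\to X$ at all. Instead it chooses the points $x_n$ alternately from the two ``wings'' $U=F\setminus\{p\}$ and $V=G\setminus\{p\}$, with $B_{n+1}\subset B_n\setminus\{x_n\}$, so that $T_n=(X\setminus B_n)\cup\{x_n\}$ is an \emph{increasing} sequence of closed sets; by Proposition~\ref{proposition-G-property-v5:2} it is $\tau_V$-convergent (to some $H$, not necessarily $X$), hence $f(T_n)=x_n$ converges. Since the odd-indexed $x_n$ lie in $U$ and the even-indexed in $V$, the limit lies in $\overline{U}\cap\overline{V}=\{p\}$. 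The butterfly hypothesis is exactly what pins down the limit.
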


\begin{proof}
  By definition, there are closed sets $F,G\subset X$ with
  $\overline{F\setminus\{p\}}\cap \overline{G\setminus\{p\}}=\{p\}$.
  Set $U=F\setminus\{p\}$ and $V=G\setminus\{p\}$, and take
  $B_0\in \mathscr{B}$. Since
  $f\left((X\setminus B_0)\cup \{p\}\right)=p$ and $p\in B_0\cap U$,
  there is $x_0\in B_0\cap U$ such that
  $f\left((X\setminus B_0)\cup \{x_0\}\right)=x_0$. For the same
  reason, taking $B_1\subset B_0\setminus\{x_0\}$, there is a point
  $x_1\in B_1\cap V$ with
  $f\left((X\setminus B_1)\cup \{x_1\}\right)=x_1$. Hence, by
  induction, there exists a sequence $\{B_n\}\subset \mathscr{B}$ and
  a sequence of points $\{x_n\}\subset X$ such that
  $B_{n+1}\subset B_n\setminus\{x_n\}$ and
  $$
    f\left((X\setminus B_{2n})\cup \{x_{2n}\}\right)=x_{2n}\in
    U\ \text{and}\ f\left((X\setminus B_{2n+1})\cup
      \{x_{2n+1}\}\right)=x_{2n+1}\in V.
    $$
    Since $T_n=(X\setminus B_n)\cup \{x_n\}$, $n<\omega$, is an
    increasing sequence of closed sets, it is
    $\tau_V$-convergent. Evidently,
    $\lim_{n\to\infty}x_n=\lim_{n\to\infty} f(T_n)\in
    \overline{U}\cap \overline{V}=\{p\}$.
\end{proof}

Let $\mathscr{F}_2(X)=\{S\subset X:1\leq |S|\leq 2\}$.  A selection
$\sigma$ for $\mathscr{F}_2(X)$ is called a \emph{weak selection} for
$X$. It generates a relation $\leq_\sigma$ on $X$ defined for
${x,y\in X}$ by $x\leq_\sigma y$ if $\sigma(\{x,y\})=x$
\cite[Definition 7.1]{michael:51}. This relation is both \emph{total}
and \emph{antisymmetric}, but not necessarily \emph{transitive}. We
write $x<_\sigma y$ whenever $x\leq_\sigma y$ and $x\neq y$, and
use the standard notation for the intervals generated by
$\leq_\sigma$. For instance, $(\leftarrow, p)_{\leq_\sigma}$ will
stand for all $x\in X$ with $x<_\sigma p$;
$(\leftarrow, p]_{\leq_\sigma}$ for that of all $x\in X$ with
$x\leq_\sigma p$; the intervals $(p,\to)_{\leq_\sigma}$,
$[p,\to)_{\leq_\sigma}$, etc., are defined in a similar way.\medskip

A weak selection $\sigma$ for $X$ is \emph{continuous} if it is
continuous with respect to the Vietoris topology on
$\mathscr{F}_2(X)$, equivalently if for every $p,q\in X$ with
$p<_\sigma q$, there are open sets $U,V\subset X$ such that $p\in U$,
$q\in V$ and $x<_\sigma y$ for every $x\in U$ and $y\in V$, see
\cite[Theorem 3.1]{gutev-nogura:01a}.  Thus, if $\sigma$ is continuous
and $p\in X$, then the intervals $(\gets,p)_{\leq_\sigma}$ and
$(p,\to)_{\leq_\sigma}$ are open in $X$ and $(\gets,p]_{\leq_\sigma}$
and $[p,\to)_{\leq_\sigma}$ are closed in $X$, see
\cite{michael:51}. However, the converse is not necessarily true
\cite[Example 3.6]{gutev-nogura:01a}, see also \cite[Corollary 4.2 and
Example 4.3]{gutev-nogura:09a}. The following property is actually
known, it will be found useful also in the rest of this paper.

\begin{proposition}
  \label{proposition-Sequences-v3:2}
  Let $X$ be a space which has a continuous weak selection $\sigma$
  and is totally disconnected at a point $p\in X$. If $\Delta_p$ is
  one of the intervals $(\gets,p)_{\leq_\sigma}$ or
  $(p,\to)_{\leq_\sigma}$, and $p$ is the limit of a sequence of
  points of $\Delta_p$, then $p$ is a countable intersection of clopen
  subsets of $\overline{\Delta_p}$. 
\end{proposition}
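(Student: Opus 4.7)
I would begin by reducing to $\Delta_p=(\gets,p)_{\leq_\sigma}$, the case $(p,\to)_{\leq_\sigma}$ being symmetric. Since $(\gets,p]_{\leq_\sigma}$ is closed in $X$, we get $\overline{\Delta_p}\subset(\gets,p]_{\leq_\sigma}$, and together with $p\in\overline{\Delta_p}$ this forces $\overline{\Delta_p}=\Delta_p\cup\{p\}$. Consequently, every $q\in\overline{\Delta_p}\setminus\{p\}$ satisfies $q<_\sigma p$. A basic observation to be reused throughout: for such $q$, continuity of $\sigma$ applied to $q<_\sigma p$ produces an open $W\ni p$ with $W\subset(q,\to)_{\leq_\sigma}$, and since $x_n\to p$ this forces $q<_\sigma x_n$ for all sufficiently large $n$.

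Next, fix a sequence $\{x_n\}\subset\Delta_p$ with $x_n\to p$. For each $n$, continuity of $\sigma$ applied to $x_n<_\sigma p$ yields open sets $U_n\ni x_n$ and $V_n\ni p$ with $x<_\sigma y$ whenever $x\in U_n$ and $y\in V_n$; in particular $V_n\subset(x_n,\to)_{\leq_\sigma}$. Using total disconnectedness at $p$, I would extract a clopen $D_n\subset X$ with $p\in D_n\subset V_n$, and then set $C_n=D_n\cap\overline{\Delta_p}$. Each $C_n$ is clopen in $\overline{\Delta_p}$, contains $p$, and is contained in $(x_n,p]_{\leq_\sigma}\cap\overline{\Delta_p}$.

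The verification that $\bigcap_n C_n=\{p\}$ is then immediate: any $q\in\bigcap_n C_n$ with $q\neq p$ lies in $(x_n,p]_{\leq_\sigma}$ for every $n$, so $x_n<_\sigma q$ for all $n$, contradicting the basic observation that $q<_\sigma x_n$ eventually. The main obstacle is the refinement step in the construction, namely extracting the clopen $D_n$ inside the open neighbourhood $V_n$: total disconnectedness at $p$ only asserts that $\{p\}$ is an intersection of clopens, not a priori that the clopens containing $p$ form a local base. I would resolve this by exploiting the order-theoretic fact that $V_n$ is disjoint from the closed set $(\gets,x_n]_{\leq_\sigma}$ and combining it with the clopen family witnessing total disconnectedness, so as to produce a clopen $D_n$ that separates $p$ from $(\gets,x_n]_{\leq_\sigma}$ directly; this order-plus-clopen argument is where the real work lies.
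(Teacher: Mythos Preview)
The paper's own proof is not self-contained: it cites \cite[Theorem~4.1]{garcia-ferreira-gutev-nogura-sanchis-tomita:99} for the fact that $p$ is a $G_\delta$-point of $\overline{\Delta_p}$, and then \cite[Proposition~5.6]{gutev-nogura:09a} to upgrade this to a countable intersection of clopen sets. Your direct argument is thus more elementary in spirit, and its $G_\delta$ portion---your ``basic observation'' that every $q\in\overline{\Delta_p}\setminus\{p\}$ eventually satisfies $q<_\sigma x_n$, whence $\bigcap_n\big((x_n,\to)_{\leq_\sigma}\cap\overline{\Delta_p}\big)=\{p\}$---is correct and is essentially what the first citation delivers.

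The gap you flag is real, and your proposed resolution is not yet a proof: total disconnectedness at $p$ yields a clopen set avoiding any prescribed \emph{point}, but separating $p$ from the possibly non-compact closed set $(\gets,x_n]_{\leq_\sigma}$ by a single clopen does not follow from that alone. Here is the concrete ``order-plus-clopen'' trick that closes the gap. For each $n$, pick a clopen $H_n\subset X$ with $p\in H_n$ and $x_n\notin H_n$ (this much total disconnectedness does give), and put
\[
C_n \;=\; H_n\cap [x_n,\to)_{\leq_\sigma}\cap\overline{\Delta_p}.
\]
This is closed in $\overline{\Delta_p}$. Since $x_n\notin H_n$, no point of $C_n$ equals $x_n$; hence also $C_n=H_n\cap (x_n,\to)_{\leq_\sigma}\cap\overline{\Delta_p}$, which is open in $\overline{\Delta_p}$. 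So $C_n$ is clopen, contains $p$, and your basic observation gives $\bigcap_n C_n=\{p\}$ exactly as you argued. The point is that you need not separate $p$ from all of $(\gets,x_n]_{\leq_\sigma}$: separating it from the single point $x_n$ suffices, because the closed half-line $[x_n,\to)_{\leq_\sigma}$ does the remaining work.
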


\begin{proof}
  According to \cite[Theorem
  4.1]{garcia-ferreira-gutev-nogura-sanchis-tomita:99}, see also
  \cite[Remark 3.5]{MR3478342}, $p$ is a $G_\delta$-point in
  $\overline{\Delta_p}$. Hence, since $X$ is totally disconnected
  at this point, it follows from \cite[Proposition
  5.6]{gutev-nogura:09a} that $p$ is a countable intersection of
  clopen subsets of $\overline{\Delta_p}$.  
\end{proof}

The remaining part of the proof of Theorem \ref{theorem-HSAP-v7:2} now
follows from the following observation, and Propositions
\ref{proposition-Sequences-v1:1} and \ref{proposition-Sequences-v1:2}.

\begin{proposition}
  \label{proposition-Sequences-v10:1}
  Let $X$ be a space which is totally disconnected at a point
  $p\in X$. If $X$ has a continuous weak selection and $p$ is the
  limit of a nontrivial convergent sequence, then $p$ is a cut point
  of $X$.
\end{proposition}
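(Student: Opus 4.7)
The plan is to produce the cut using the continuous weak selection $\sigma$, and to refine one side via the clopen structure at $p$ supplied by Proposition~\ref{proposition-Sequences-v3:2} when the convergent sequence approaches $p$ from only one side. Set $U=(\gets,p)_{\leq_\sigma}$ and $V=(p,\to)_{\leq_\sigma}$; since $\leq_\sigma$ is total and antisymmetric, $U$ and $V$ are disjoint open subsets of $X$ with $U\cup V=X\setminus\{p\}$, and being open and mutually disjoint they satisfy $\overline{U}\subset U\cup\{p\}$ and $\overline{V}\subset V\cup\{p\}$. Hence $\overline{U}\cap\overline{V}\subset\{p\}$, so $(U,V)$ already witnesses $p$ as a cut point the moment $p$ lies in the closure of both sides.

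Let $\{x_n\}\subset X\setminus\{p\}$ be the given nontrivial sequence converging to $p$. Passing to a subsequence and possibly swapping the roles of $U$ and $V$, I may assume $\{x_n\}\subset U$, so $p\in\overline{U}=U\cup\{p\}$. If $p\in\overline{V}$ as well, then $(U,V)$ is the required cut. The delicate case is $p\notin\overline{V}$, where I must split the $U$-side itself into two pieces that both accumulate at $p$.

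To effect this split, I apply Proposition~\ref{proposition-Sequences-v3:2} with $\Delta_p=U$ and, after replacing the resulting family by the sequence of its finite intersections, obtain a decreasing sequence $\{C_n\}$ of clopen subsets of $\overline{U}$ with $\bigcap_n C_n=\{p\}$. I then inductively choose a subsequence $y_i=x_{k_i}$ of $\{x_n\}$ together with clopen sets $D_i$ of $\overline{U}$ satisfying $p\in D_i\subset C_i$, $y_i\notin D_i$, and $y_i\in F_i:=D_1\cap\cdots\cap D_{i-1}$ (with $F_1:=\overline{U}$). At each step, since $y_i\neq p$ and $\bigcap_n C_n=\{p\}$, some $C_m$ with $m>i$ misses $y_i$ and may be taken as $D_i$; then $F_{i+1}:=F_i\cap D_i$ is a clopen neighborhood of $p$ in $\overline{U}$, and $x_n\to p$ lets me pick $y_{i+1}$ from a tail of $\{x_n\}$ inside $F_{i+1}$. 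Setting $G_i:=F_i\setminus F_{i+1}$ produces pairwise disjoint clopen subsets of $\overline{U}$ with $y_i\in G_i$; the inclusions $D_i\subset C_i$ force $\bigcap_i F_i\subset \bigcap_i C_i=\{p\}$, whence $U=\overline{U}\setminus\{p\}=\bigcup_i G_i$.

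The cut will be $U':=\bigcup_{i\text{ odd}} G_i$ and $V':=V\cup\bigcup_{i\text{ even}} G_i$. These are disjoint and partition $X\setminus\{p\}$, and the odd and even subsequences of $\{y_i\}$ both converge to $p$, so $p\in\overline{U'}\cap\overline{V'}$. To verify $\overline{U'}\cap\overline{V'}=\{p\}$, I note that $\overline{U'}\subset\overline{U}\subset U\cup\{p\}$ is disjoint from $\overline{V}\setminus\{p\}$, while any $z\in\overline{U'}\setminus\{p\}$ must lie in some $G_j$ (since $\bigcap_i F_i=\{p\}$ and $z\in U$), and if $j$ were even then $G_j$ would be a clopen neighborhood of $z$ in $X$ missing all odd $G_i$'s, hence missing $U'$. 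The principal technical challenge is the dual constraint on each $D_i$: it must be small enough, namely $D_i\subset C_i$, to drive $\bigcap_i F_i$ down to $\{p\}$, yet large enough, namely a neighborhood of $p$, to receive a tail of $\{x_n\}$. The countable intersection $\{p\}=\bigcap_n C_n$ provided by Proposition~\ref{proposition-Sequences-v3:2} is exactly what reconciles these two demands.
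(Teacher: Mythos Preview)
Your proof is correct and follows essentially the same route as the paper's: pass to the $\leq_\sigma$-interval $\Delta_p$ containing a convergent sequence, invoke Proposition~\ref{proposition-Sequences-v3:2} to get a decreasing countable family of clopen sets in $\overline{\Delta_p}$ intersecting to $\{p\}$, and split that side into alternating clopen slices to manufacture the cut. The paper's version is slightly more economical in two respects: it dispenses with your preliminary case split (whether $p\in\overline{V}$), since the odd--even construction works uniformly with the other interval absorbed into $X\setminus\overline{U}$; and in place of your inductive choice of $D_i$, $F_i$, $G_i$ it simply passes to a subsequence so that $x_n\in H_n\setminus H_{n+1}$ and takes $S_n=H_n\setminus H_{n+1}$ directly. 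One small point you leave implicit: to conclude that $G_j$ is a neighbourhood of $z$ \emph{in $X$} (not just in $\overline{U}$), you are using that $G_j\subset U$ is open in $X$, which follows since $G_j$ is relatively open in $\overline{U}$ and contained in the open set $U$.
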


\begin{proof}
  Let $\sigma$ be a continuous weak selection for $X$. Then, by
  condition, $p$ is the limit of a sequence of points of
  $\Delta_p\subset X$, where $\Delta_p$ is one of the intervals
  $(\gets, p)_{\leq_\sigma}$ or $(p,\to)_{\leq_\sigma}$. Hence, by
  Proposition \ref{proposition-Sequences-v3:2}, there is a decreasing
  sequence $\{H_n\}$ of clopen subsets of $\overline{\Delta_p}$ and a
  sequence $\{x_n\}\subset \Delta_p$ convergent to $p$ such that
  $\bigcap_{n<\omega}H_n=\{p\}$ and $x_n\in H_n$, $n<\omega$. Taking
  subsequences if necessary, we can assume
  $x_n\in S_n=H_n\setminus H_{n+1}$ for all $n<\omega$. Then
  $U=\bigcup_{n<\omega}S_{2n}\subset \Delta_p\subset X\setminus \{p\}$
  is an open set with $\overline{U}=U\cup\{p\}$ because
  $\{x_{2n}\}\subset U$. Accordingly, for the set
  $V=X\setminus\overline{U}\subset X\setminus\{p\}$ we also have that
  $\overline{V}=V\cup\{p\}$ because $\{x_{2n+1}\}\subset V$. Thus, $p$
  is a cut point of $X$.
\end{proof}

\section{Countably-Approachable Points}
\label{sec:count-appr-points}

For a space $X$, the \emph{components} (called also \emph{connected
  components}) are the maximal connected subsets of $X$. They form a
closed partition $\mathscr{C}$ of $X$, and each element
$\mathscr{C}[x]\in \mathscr{C}$ containing a point $x\in X$ is called
the \emph{component} of this point.

\begin{proposition}
  \label{proposition-G-property-v1:1}
  Let $X$ be a space and $T,Z\in \mathscr{F}(X)$ be such that $Z$ is
  connected. If $f\in\sel[\mathscr{F}(X)]$ and $q=f(T\cup D)$ for some
  $D\in \mathscr{F}(Z)$, then $f(T\cup S)\in \mathscr{C}[q]$ for every
  $S\in \mathscr{F}(Z)$.
\end{proposition}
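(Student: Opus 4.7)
The plan is to establish the conclusion in three stages -- for singleton, finite, and arbitrary closed extensions of $D$ inside $Z$ -- and then use a symmetry argument to remove $D$ altogether.

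First, for a fixed $E \in \mathscr{F}(X)$, I would verify that the map $\phi_E : Z \to \mathscr{F}(X)$ given by $\phi_E(z) = E \cup \{z\}$ is Vietoris-continuous: for a basic open $\langle \mathscr{V} \rangle$ containing $\phi_E(z_0)$, the set $W = (\bigcup \mathscr{V})\cap\bigcap\{V\in\mathscr{V}: z_0\in V \text{ and } E\cap V=\emptyset\}$ is a neighbourhood of $z_0$ which $\phi_E$ sends into $\langle \mathscr{V}\rangle$. Applying this with $E = T\cup D$, the composition $f\circ\phi_E$ is a continuous map on the connected space $Z$, so its image is connected in $X$; it contains $q$ because $\phi_E(d) = T\cup D$ for any $d\in D$. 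Hence $f(T\cup D\cup\{z\})\in\mathscr{C}[q]$ for every $z\in Z$. Iterating the same construction with $E = T\cup D\cup F$, by induction on $|F|$, promotes this to $f(T\cup D\cup F)\in \mathscr{C}[q]$ for every $F\in\Sigma(Z)$.

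Next, for an arbitrary $S \in \mathscr{F}(Z)$, the family $\{T\cup D\cup F : F \in \Sigma(S)\}$ is up-directed in $\mathscr{F}(X)$ and its union $T\cup D\cup S$ is already closed (being a finite union of closed sets). Proposition \ref{proposition-G-property-v5:2} therefore yields $\tau_V$-convergence of this family to $T\cup D\cup S$. By continuity of $f$, together with the fact that $\mathscr{C}[q]$ is closed (as $X$ is Hausdorff), the limit $f(T\cup D\cup S)$ still lies in $\mathscr{C}[q]$.

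To remove $D$ from the conclusion, I would swap the roles of $D$ and $S$: running the same argument with $(D,S)$ replaced by $(S,D)$ gives $f(T\cup S\cup D)\in\mathscr{C}[f(T\cup S)]$. Since $T\cup D\cup S = T\cup S\cup D$, this common element belongs to both $\mathscr{C}[q]$ and $\mathscr{C}[f(T\cup S)]$, forcing the two components to coincide and yielding $f(T\cup S)\in\mathscr{C}[q]$, as required. The only slightly delicate point is the Vietoris-continuity check for $\phi_E$; everything else reduces to the principle that continuous images of connected sets are connected, closedness of components in a Hausdorff space, and Proposition \ref{proposition-G-property-v5:2}.
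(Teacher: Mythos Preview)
Your argument is correct, but it takes a substantially longer route than the paper's. The paper observes that the map $f_T:\mathscr{F}(Z)\to X$, $f_T(S)=f(T\cup S)$, is continuous, and then simply invokes Michael's theorem \cite[Theorem~4.10]{michael:51} that $\mathscr{F}(Z)$ is $\tau_V$-connected whenever $Z$ is connected; the image $Q=f_T(\mathscr{F}(Z))$ is therefore a connected subset of $X$ containing $q$, hence $Q\subset\mathscr{C}[q]$, and the proof is over in two lines. Your three-stage construction (singletons, finite sets via induction, then arbitrary closed sets via Proposition~\ref{proposition-G-property-v5:2}) together with the final symmetry trick essentially rebuilds, by hand, enough of the connectedness of $\mathscr{F}(Z)$ to reach the same conclusion. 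The payoff of your approach is that it is self-contained---it does not appeal to Michael's hyperspace connectedness theorem and uses only the connectedness of $Z$ itself plus the up-directed convergence already established in the paper---whereas the paper's proof is far shorter and more conceptual once that classical result is available.
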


\begin{proof}
  Define a continuous map $f_T:\mathscr{F}(Z)\to X$ by
  $f_T(S)=f(T\cup S)$, for $S\in \mathscr{F}(Z)$. Then
  $Q=f_T\left(\mathscr{F}(Z)\right)$ is a connected subset of $X$
  because $\mathscr{F}(Z)$ is $\tau_V$-connected, see \cite[Theorem
  4.10]{michael:51}.  Accordingly, $Q\subset \mathscr{C}[q]$ because
  $q\in Q$.
\end{proof}

We now have the following relaxed condition for countably-approachable
points.

\begin{lemma}
  \label{lemma-Sequences-v3:1}
  Let $X$ be a space, $p=f(H)$ for some $f\in\sel[\mathscr{F}(X)]$ and
  ${H\in \mathscr{F}(X)}$, and $U\subset X\setminus\{p\}$ be an open
  set with $\overline{U}=U\cup\{p\}$. Also, let
  $\{H_n\}\subset\mathscr{F}(X)$ be a sequence which is
  $\tau_V$-convergent to $H$ such that for every $n<\omega$,
  \begin{equation}
    \label{eq:Sequences-v4:1}
    f(H_n)\in H_n\cap U\subset H_{n+1}\cap U\quad\text{and\quad
      $H_n\cap U$ is clopen.} 
  \end{equation}
Then $p$ is countably-approachable.
\end{lemma}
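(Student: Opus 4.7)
The plan is to show $p$ is countably-approachable by exhibiting the witness pair $(V,\{B_n\})$ with $V = \bigcup_{n<\omega} K_n$, where $K_n = H_n \cap U$, and $B_n = \overline{V}\setminus K_n$. The set $V$ is open (a union of clopen sets) and sits inside $U \subset X\setminus\{p\}$. Since $f$ is continuous with $H_n \to H$ in $\tau_V$, we get $f(H_n) \to f(H) = p$; as each $f(H_n) \in K_n \subset V$, this places $p \in \overline{V}$, so each $B_n$ is automatically a clopen (in $\overline{V}$) neighborhood of $p$.

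The first task is to verify $\overline{V} = V \cup \{p\}$. Since $\overline{V} \subset \overline{U} = U \cup \{p\}$, any exceptional $y \in \overline{V}\setminus(V\cup\{p\})$ must lie in $U$. Using an open neighborhood of $y$ contained in $U$ and the increasing nature of the $K_n$'s, the $\tau_V$-convergence $H_n \to H$ (via the lower Kuratowski inclusion $H \subset \liminf_n H_n$) shows that every such neighborhood meets $K_n$ for all but finitely many $n$, hence $y \in H$. From here I would introduce the closed sets $T_n = H_n \cup \{y\}$, which are $\tau_V$-convergent to $H \cup \{y\} = H$, and exploit continuity of $f$ together with the clopenness of each $K_n$ in $X$ and the hypothesis $y \notin K_n$ for every $n$ to locate $f(T_n)$ away from $p$, contradicting $f(T_n) \to f(H) = p$.

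The second task is to show $\{B_n\}$ is a base at $p$ in $\overline{V}$. Fix open $O \ni p$ in $\overline{V}$. By $\overline{V} = V\cup\{p\}$, proving $B_n \subset O$ for some $n$ reduces to showing the set $C = \overline{V}\setminus O$ is contained in some $K_n$. Since $C \subset V = \bigcup_m K_m$ and each $K_m$ is clopen in $X$, this would be automatic if $C$ were compact, since the $K_m$ form an increasing open cover. In the absence of compactness I would suppose, toward a contradiction, that escape points $y_n \in C\setminus K_n$ exist for every $n$, and use up-directed families such as $\{H_m \cup F : F \in \Sigma(\{y_k : k<\omega\})\}$ together with Proposition~\ref{proposition-G-property-v5:2} (and, dually, Proposition~\ref{proposition-G-property-v3:1}) to build a $\tau_V$-convergent sequence whose $f$-image must simultaneously converge to $p$ by continuity and remain in the closed sets $X\setminus K_n$ (clopen neighborhoods of $p$) because of the placement of the $y_n$'s.

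The main obstacle across both tasks is identical: without a priori compactness of the closed sets we care about, the proof must use the continuous selection $f$ together with $\tau_V$-convergence $H_n \to H$ to emulate compactness. Concretely, the technical core is organizing the closed-set perturbations—$H_n \cup \{y\}$, $H_n \cup \{y_n\}$, and the up-directed families generated by escape points—so that continuity forces $f$ of the $\tau_V$-limit to be $p$ while positional constraints arising from the clopen filtration $K_n \subset K_{n+1} \subset \cdots$ force $f$ of the terms of the sequence away from $p$. Once both contradictions are carried through, the pair $(V,\{B_n\})$ witnesses countable approachability of $p$.
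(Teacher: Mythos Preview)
Your plan diverges from the paper's and, as written, has a genuine gap in both tasks that stems from the same missing idea: you never obtain any control over where $f$ sends \emph{perturbations} of the $H_n$'s. The hypothesis only tells you that $f(H_n)\in K_n$; it says nothing about $f(H_n\cup\{y\})$, $f(H_m\cup F)$ for finite $F\subset V$, or any of the other modified sets you propose to feed to $f$. In Task~1, after forming $T_n=H_n\cup\{y\}$ you assert you can ``locate $f(T_n)$ away from $p$'', but there is no mechanism for this: $f(T_n)$ is free to land anywhere in $T_n$, possibly on or arbitrarily close to $p$. (Your auxiliary step ``hence $y\in H$'' is also unsupported: the inclusion $H\subset\liminf_n H_n$ is the one that follows from $\tau_V$-convergence, and it points the wrong way for concluding $y\in H$.) In Task~2 the same obstruction bites harder: from escape points $y_n\in C\setminus K_n$ you propose up-directed families such as $\{H_m\cup F:F\in\Sigma(\{y_k\})\}$, but you give no reason why their $f$-images should lie in $X\setminus K_n$, and the hypotheses provide none. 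The filtration $K_0\subset K_1\subset\cdots$ constrains \emph{membership}, not the values of $f$.

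The paper's proof supplies exactly the missing control. It does \emph{not} try to show that $V=\bigcup_n K_n$ itself witnesses countable approachability; instead it passes (after a subsequence) to $F_n=L_n\cup(H_{n+1}\setminus U)$, uses the sandwich $F_n\subset T_n\cup\{x\}\subset H_{n+1}$, and---crucially---invokes Proposition~\ref{proposition-G-property-v1:1} (the connected-component transfer) to manufacture nonempty clopen sets $S_n\subset L_{n+1}\setminus L_n$ with the pointwise identity $f(T_n\cup\{x\})=x$ for every $x\in S_n$. This identity is what pins down $f$ on the perturbed sets; combined with the sandwich and the continuity of $f$ at $H$, it forces $S_n\to\{p\}$ in $\tau_V$, from which countable approachability follows. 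Without an analogue of this step, your contradictions in Tasks~1 and~2 do not close.
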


\begin{proof}
  In this proof, a crucial role will be played by the sets
  $L_n=H_n\cap U$ and $F_n=L_n\cup (H_{n+1}\setminus U)$, $n<\omega$.
  According to \eqref{eq:Sequences-v4:1},
  $f(H_n)\in L_n\subset L_{n+1}$ for every $n<\omega$. Since
  $\lim_{n\to\infty}f(H_n)=f(H)=p\notin U$, taking a subsequence if
  necessary, we can assume that
  \begin{equation}
    \label{eq:Sequences-v3:1}
    f(H_{n+1})\in L_{n+1}\setminus L_n=H_{n+1}\setminus F_n\quad
    \text{for every $n<\omega$.} 
  \end{equation}
  In these terms, let us also observe that 
  \begin{equation}
    \label{eq:HSAP-v31:1}
    \{F_n\}\subset \mathscr{F}(X)\ \ \text{is $\tau_V$-convergent to
      $H$.}
  \end{equation}
  Indeed, by \eqref{eq:Sequences-v4:1},
  $\{L_n\}\subset \mathscr{F}(X)$ is $\tau_V$-convergent to
  $L=\overline{\bigcup_{n<\omega}L_n}\subset H$. Take a finite open
  cover $\mathscr{V}$ of $H$ with $H\in \langle\mathscr{V}\rangle$,
  and set $\mathscr{V}_L=\{V\in \mathscr{V}:V\cap L\neq
  \emptyset\}$. Then there is $k<\omega$ such that
  $L_n\in \langle\mathscr{V}_L\rangle$ and
  $H_n\in \langle\mathscr{V}\rangle$ for every $n\geq k$. If $n\geq k$
  and $L_n\cap W=\emptyset$ for some $W\in\mathscr{V}$, then
  $W\notin \mathscr{V}_L$ and, therefore,
  $(H_{n+1}\setminus U)\cap W\neq \emptyset$. Accordingly,
  $F_n=L_n\cup (H_{n+1}\setminus U)\in
  \langle\mathscr{V}\rangle$. \smallskip

  Now, as in the proof of \cite[Lemma 4.4]{gutev:05a}, for every
  $n<\omega$ we will construct a closed set $T_n\subset X$ and a
  nonempty clopen set $S_n\subset L_{n+1}\setminus L_n$ such that
  \begin{equation}
    \label{eq:HSAP-v39:2}
    F_n\subset T_n \subset H_{n+1}\setminus S_n\quad \text{and}\quad
    f(T_n\cup\{x\})=x,\,\ \text{for every
      $x\in S_n$.}
\end{equation}
Briefly, $F_n\subset H_{n+1}$ and by \eqref{eq:Sequences-v4:1},
$H_{n+1}\setminus F_n=L_{n+1}\setminus L_n$ is clopen. Moreover, by
\eqref{eq:Sequences-v3:1}, $f(H_{n+1})\in H_{n+1}\setminus F_n$ and,
therefore, $q=f(F_n\cup E)\in E$ for some finite set
$E\subset H_{n+1}\setminus F_n$. Accordingly, we also have that
$\mathscr{C}[q]\subset H_{n+1}\setminus F_n$. Thus, setting
$D= E\cap \mathscr{C}[q]$, $K=E\setminus \mathscr{C}[q]$ and
$T_n=F_n\cup K$, it follows that
$D\subset \mathscr{C}[q]\subset H_{n+1}\setminus T_n$. Hence, by
Proposition~\ref{proposition-G-property-v1:1}, $f(T_n\cup\{y\})=y$ for
every $y\in \mathscr{C}[q]$ because $f(T_n\cup D)= f(F_n\cup
E)=q$. Finally, since $K$ is a finite set and $H_{n+1}\setminus F_n$
is clopen, $\mathscr{C}[q]\subset S$ for some clopen set
$S\subset H_{n+1}\setminus T_n$. Thus, the sets $T_n$ and
$S_n=\{x\in S: f(T_n\cup\{x\})=x\}$ are as required in
\eqref{eq:HSAP-v39:2}.\smallskip

To finish the proof, it only remains to show that
$\{S_n\}\subset \mathscr{F}(X)$ is $\tau_V$-convergent to $\{p\}$, see
\cite[Section 4]{gutev:05a}. So, take an open set $W$ containing $p$
and a finite family $\mathscr{V}$ of open sets such that
$H\in \langle\mathscr{V}\rangle$ and
$f(\langle\mathscr{V}\rangle)\subset W$. Then by condition and the
property in \eqref{eq:HSAP-v31:1}, there is $k<\omega$ with
$F_n,H_n\in \langle\mathscr{V}\rangle$ for every $n\geq k$.
Accordingly, for $n\geq k$ and $x\in S_n$, it follows from
\eqref{eq:HSAP-v39:2} that $x=f(T_n\cup\{x\})\in U$ because
$F_n\subset T_n\cup\{x\}\subset H_{n+1}$ implies that
$T_n\cup\{x\}\in \langle\mathscr{V}\rangle$. The proof is complete.
\end{proof}

\section{Approaching Trivial Components}
\label{sec:appr-totally-disc}

The \emph{quasi-component} $\mathscr{Q}[p]$ of a point $p\in X$ is the
intersection of all clopen subsets of $X$ containing this
point. Evidently, $\mathscr{C}[p]\subset \mathscr{Q}[p]$ for every
$p\in X$, but the converse is not necessarily true. However, these
components coincide for spaces with continuous weak selections, see
\cite[Theorem 4.1]{gutev-nogura:00b}. Hence, in this case, $X$ is
totally disconnected at $p\in X$ precisely when $\mathscr{C}[p]=\{p\}$
is trivial. \medskip

Here, we will prove the special case of Theorem
\ref{theorem-HSAP-v7:1} when the component of $X$ at $p\in X_\Theta$
is trivial. So, throughout this section, $f\in \sel[\mathscr{F}(X)]$
is a fixed selection such that $p=f(X)$ is a \emph{cut point} of $X$,
and $X$ is \emph{totally disconnected at $p$}. In this setting, the
trivial case is when $p$ is a $G_\delta$-point of $X$.

\begin{proposition}
  \label{proposition-Sequences-v10:2}
  If $p$ is a countable intersection of clopen subsets of $X$, then it
  is countably-approachable.
\end{proposition}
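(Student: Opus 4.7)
The plan is to apply Lemma~\ref{lemma-Sequences-v3:1} with $H=X$. First I would upgrade the cut structure at $p$ to an open one: by hypothesis, there exist $U,V\subset X$ with $X\setminus\{p\}=U\cup V$ and $\overline{U}\cap\overline{V}=\{p\}$, and a short argument shows $V=X\setminus\overline{U}$ and symmetrically $U=X\setminus\overline{V}$. Indeed, $V\cap\overline{U}\subset\overline{V}\cap\overline{U}=\{p\}$ with $p\notin V$, while $X\setminus\overline{U}\subset X\setminus U=V\cup\{p\}$ and $p\in\overline{U}$. Consequently $U$ and $V$ are automatically open with $\overline{U}=U\cup\{p\}$ and $\overline{V}=V\cup\{p\}$, so $U$ already meets the open-set requirement in the definition of countable-approachability.

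Next I would take a decreasing sequence $\{C_n\}$ of clopen subsets of $X$ with $\bigcap_{n<\omega}C_n=\{p\}$, and set $H_n=X\setminus C_n$. Each $H_n$ is clopen, $\{H_n\}$ is increasing, and $\bigcup_{n<\omega}H_n=X\setminus\{p\}$ is dense in $X$ (since $p$ is non-isolated, being cut); by Proposition~\ref{proposition-G-property-v5:2} the sequence $\{H_n\}$ is $\tau_V$-convergent to $X$. The trace $H_n\cap U=U\setminus C_n$ is open, and also closed because $\overline{U\setminus C_n}\subset\overline{U}\cap(X\setminus C_n)=U\setminus C_n$; hence $H_n\cap U$ is clopen, and the sequence $\{H_n\cap U\}$ is clearly increasing.

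The one step that is not a direct verification is arranging $f(H_n)\in H_n\cap U$. Since $p\in C_n$, each $f(H_n)$ lies in $H_n\subset U\cup V$, so it falls in exactly one side of the cut. By pigeonhole, along a subsequence the side is constant, and after interchanging $U$ and $V$ if necessary I may assume $f(H_n)\in U\cap H_n=U\setminus C_n=H_n\cap U$ for all $n$ of that subsequence. All hypotheses of Lemma~\ref{lemma-Sequences-v3:1} are then in force for $H=X$, the open set $U$, and this subsequence $\{H_n\}$, and the lemma concludes that $p$ is countably-approachable. The only (very mild) obstacle is this side-ambiguity of the cut, resolved by the symmetry between $U$ and $V$.
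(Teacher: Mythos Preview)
Your argument is correct, but the paper's proof is considerably shorter because it sidesteps the very obstacle you spend effort on. Instead of working with one side of the $p$-cut, the paper simply takes $U=X\setminus\{p\}$ (which is open with $\overline{U}=U\cup\{p\}$ since the cut point $p$ is non-isolated) and the same clopen exhaustion $H_n=X\setminus C_n$. With this choice $H_n\cap U=H_n$, so the condition $f(H_n)\in H_n\cap U$ in Lemma~\ref{lemma-Sequences-v3:1} is automatic from $f$ being a selection, and the pigeonhole/swap step disappears entirely. Your route has the merit of producing the approachability explicitly on one half of the cut, but for the bare statement the paper's choice of $U$ is the cleaner move.
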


\begin{proof}
  Since $U=X\setminus\{p\}=\bigcup_{n<\omega}H_n$ for some increasing
  sequence $\{H_n\}\subset \mathscr{F}(X)$ of clopen sets, the
  property follows from Lemma \ref{lemma-Sequences-v3:1} by taking
  $H=X$.
\end{proof}

The rest of this section deals with the nontrivial case when $p$
is not a countable intersection of clopen sets. To this end, we shall
say that a pair $(U,V)$ of subsets of $X$ is a \emph{$p$-cut} of $X$
if $X\setminus\{p\}=U\cup V$ and
$\overline{U}\cap \overline{V}=\{p\}$.

\begin{proposition}
  \label{proposition-Sequences-v10:3}
  If $p$ is not a countable intersection of clopen subsets of $X$,
  then $X$ has a $p$-cut $(U,V)$ such that
  \begin{enumerate}[label=\upshape{(\roman*)}]
  \item\label{item:Sequences-v10:5} $p$ is a countable intersection of
    clopen subsets of\/ $\overline{U}$,
  \item\label{item:Sequences-v10:6} $V$ doesn't contain a sequence
    convergent to $p$.
  \end{enumerate}
\end{proposition}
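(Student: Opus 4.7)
The plan is to let $\sigma$ be a continuous weak selection on $X$, obtained by restricting any $f\in\sel[\mathscr{F}(X)]$ to $\mathscr{F}_2(X)$, set $A=(\gets,p)_{\leq_\sigma}$ and $B=(p,\to)_{\leq_\sigma}$, and show that $(U,V)=(A,B)$ works. These are disjoint open sets with $A\cup B=X\setminus\{p\}$, and the inclusions $\overline{A}\subset(\gets,p]_{\leq_\sigma}$ and $\overline{B}\subset[p,\to)_{\leq_\sigma}$ already force $\overline{A}\cap\overline{B}\subset\{p\}$. Since $p\in X_\Theta$ is a cut point, hence in particular butterfly, Theorem \ref{theorem-HSAP-v7:2} furnishes a nontrivial sequence converging to $p$, and after passing to a subsequence lying entirely in one of $A$ or $B$ and, if necessary, reversing $\sigma$, we may assume this sequence lies in $A$. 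Proposition \ref{proposition-Sequences-v3:2} then produces clopen sets $E_n\subset\overline{A}$ with $\{p\}=\bigcap_{n<\omega}E_n$, which is exactly condition (i).

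To upgrade $\overline{A}\cap\overline{B}\subset\{p\}$ to the equality required for a $p$-cut, I must show $p\in\overline{B}$. If instead $p\notin\overline{B}$, then $\overline{B}=B$ is closed, so $\overline{A}=A\cup\{p\}$ is clopen in $X$; consequently each $E_n$, being clopen in a clopen subspace, is clopen in $X$ itself, contradicting the hypothesis that $p$ is not a countable intersection of clopen subsets of $X$. Hence $(A,B)$ is a genuine $p$-cut.

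For condition (ii), I argue by contradiction: if some sequence in $B$ converges to $p$, Proposition \ref{proposition-Sequences-v3:2} applied to $B$ yields clopen sets $F_n\subset\overline{B}$ with $\{p\}=\bigcap_{n<\omega}F_n$, and in particular $p\in E_n\cap F_n$ for every $n$. The key observation is a ``wedge'' construction: because $X=\overline{A}\cup\overline{B}$ with $\overline{A}\cap\overline{B}=\{p\}$, one has $\overline{A}=A\cup\{p\}$ and $\overline{B}=B\cup\{p\}$, so the set $W_n=E_n\cup F_n$ satisfies $X\setminus W_n=(\overline{A}\setminus E_n)\cup(\overline{B}\setminus F_n)$, a union of two closed subsets of $X$; hence each $W_n$ is clopen in $X$. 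Furthermore any $x\in A\setminus\{p\}$ misses every $F_n$ (since $x\notin\overline{B}$), so $x\in\bigcap_{n<\omega}W_n$ would force $x\in\bigcap_{n<\omega}E_n=\{p\}$, and symmetrically for $x\in B\setminus\{p\}$. Thus $\bigcap_{n<\omega}W_n=\{p\}$ with each $W_n$ clopen in $X$, again contradicting the hypothesis. Setting $U=A$ and $V=B$ completes the proof. The main obstacle I foresee is identifying the wedge construction that converts separate countable clopen chains in $\overline{A}$ and $\overline{B}$ into a single countable clopen chain in $X$; with this trick in hand, Proposition \ref{proposition-Sequences-v3:2} takes care of both (i) and the contradiction driving (ii).
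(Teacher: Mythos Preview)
Your proof is correct and follows essentially the same route as the paper. Both arguments take $U$ to be the $\leq_f$-half-interval containing a sequence convergent to $p$ (guaranteed by Theorem~\ref{theorem-HSAP-v7:2}), invoke Proposition~\ref{proposition-Sequences-v3:2} for~(i), and derive both the $p$-cut property and~(ii) by contradiction with the hypothesis on $p$; the only difference is that you spell out in full the ``wedge'' construction $W_n=E_n\cup F_n$ which the paper compresses into the phrase ``for the same reason''.
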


\begin{proof}
  Since $p\in X_\Theta$ is a cut point, by Theorem
  \ref{theorem-HSAP-v7:2}, it is the limit of a sequence of points of
  $X\setminus\{p\}$. Therefore, $p$ is the limit of a sequence
  of points of $U\subset X$, where $U$ is one of the intervals
  $(\gets,p)_{\leq_f}$ or $(p,\to)_{\leq_f}$.  Hence, by
  Proposition \ref{proposition-Sequences-v3:2}, $p$ is a countable
  intersection of clopen subsets of $\overline{U}$. This implies that
  $V=X\setminus \overline{U}$ is not clopen in $X$ because $p$ is not
  a countable intersection of clopen subsets of $X$. For the same
  reason, $V$ doesn't contain a sequence convergent to
  $p$. Accordingly, this $p$-cut $(U,V)$ of $X$ is as required.
\end{proof}

The following two mutually exclusive cases finalise the proof of
Theorem \ref{theorem-HSAP-v7:1} when ${\mathscr{C}[p]=\{p\}}$. They
are based on two alternatives for the selection $f$ with respect
to the $p$-cut $(U,V)$ constructed in Proposition
\ref{proposition-Sequences-v10:3}, the set ${Y=\overline{V}}$ and a
fixed increasing sequence $\{T_n\}\subset \mathscr{F}(X)$ of clopen
sets with $\bigcup_{n<\omega}T_n=U$.

\begin{proposition}
  \label{proposition-Sequences-v5:3}
  Suppose that for every $S\in \mathscr{F}(Y)$ with $p\notin S$,
  \begin{equation}
    \label{eq:Sequences-v5:3}
    f(T_n\cup \{p\}\cup S)\neq p\quad \text{for all but finitely many
      $n<\omega$.} 
  \end{equation}
  Then $p$ is countably-approachable. 
\end{proposition}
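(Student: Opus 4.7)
The plan is to apply Lemma \ref{lemma-Sequences-v3:1} with $H=\overline{U}\cup S$ and $H_n=T_n\cup\{p\}\cup S$ for a suitably chosen $S\in\mathscr{F}(Y)$ with $p\notin S$ and $f(\overline{U}\cup S)=p$.

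First I would reduce to the case $f(T_n\cup Y)=p$ for every $n$. Since $\{T_n\cup Y\}$ is an increasing sequence of closed sets with $\bigcup_n(T_n\cup Y)=U\cup Y=X$, Proposition \ref{proposition-G-property-v5:2} gives $T_n\cup Y\to X$ in $\tau_V$, hence $f(T_n\cup Y)\to f(X)=p$ by continuity. Because $T_n$ is clopen in $X$ and disjoint from $Y$, the value $f(T_n\cup Y)$ lies in $T_n$ or in $Y$. If $f(T_n\cup Y)\in T_n$ along an infinite subsequence, Lemma \ref{lemma-Sequences-v3:1} applies directly to that subsequence with $H=X$ and $H_n=T_n\cup Y$. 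Otherwise $f(T_n\cup Y)\in Y$ for all large $n$, and combining $f(T_n\cup Y)\to p$ with Proposition \ref{proposition-Sequences-v10:3}(ii), which forbids any sequence in $V$ converging to $p$, forces $f(T_n\cup Y)=p$ for all large $n$; we may then re-index so that this holds for every $n$.

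The key step is producing $S$. For each $n$, Proposition \ref{proposition-G-property-v5:2} gives the up-directed $\tau_V$-convergence $T_n\cup\{p\}\cup S'\to T_n\cup Y$ as $S'$ ranges over $\Sigma(V)$; since $f(T_n\cup Y)=p$, continuity of $f$ at $T_n\cup Y$ lets us, for any prescribed open neighbourhood $W$ of $p$ disjoint from $T_n$, choose a finite $S'_n\in\Sigma(V)$ with $f(T_n\cup\{p\}\cup S'_n)\in W$---concretely, by selecting one point of $V$ in each member of a suitable finite open cover of $T_n\cup Y$ that is otherwise disjoint from $T_n\cup\{p\}$. The hypothesis, applied to $S'_n$, excludes $p$ as the value, so $f(T_n\cup\{p\}\cup S'_n)$ in fact lies in $W\cap S'_n$. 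Iterating this refinement while shrinking $W$ away from the accumulated $S'_n$'s and then letting $S=\overline{\bigcup_n S'_n}$ yields a closed subset of $Y$; Proposition \ref{proposition-Sequences-v10:3}(ii) again ensures $p\notin\overline{\bigcup_n S'_n}$. The refinement is arranged so that $f(\overline{U}\cup S)=p$.

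With this $S$, set $H_n=T_n\cup\{p\}\cup S$ and $H=\overline{U}\cup S$. The $H_n$ are increasing with $\overline{\bigcup_n H_n}=\overline{U}\cup S=H$, so $H_n\to H$ by Proposition \ref{proposition-G-property-v5:2}; each $H_n\cap U=T_n$ is clopen, and the $T_n$ are increasing. The hypothesis gives $f(H_n)\neq p$ for $n$ large, while $f(H_n)\to f(H)=p$; since $S$ is closed with $p\notin S$, the value $f(H_n)$ eventually lies in a neighbourhood of $p$ disjoint from $S$, and therefore in $T_n$. Lemma \ref{lemma-Sequences-v3:1} then concludes that $p$ is countably-approachable. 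The main obstacle is the construction of $S$: forcing $f(\overline{U}\cup S)$ to equal $p$ exactly, rather than merely approach $p$ in the directed sense, requires a delicate iterative argument exploiting both the continuity of $f$ at each $T_n\cup Y$ and the hypothesis via repeated refinement of the finite subsets of $V$.
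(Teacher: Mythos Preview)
Your case analysis in Step~1 is correct, and Step~3 is also fine \emph{provided} the set $S$ exists. The gap is precisely where you say it is: you have not produced a closed $S\subset Y$ with $p\notin S$ and $f(\overline{U}\cup S)=p$, and there is no reason to believe such an $S$ exists. Your sketch builds finite sets $S'_n\subset V$ with $f(T_n\cup\{p\}\cup S'_n)$ near $p$, but (i) the hypothesis~\eqref{eq:Sequences-v5:3} applies only to a \emph{fixed} $S$ and says nothing about $f(T_n\cup\{p\}\cup S'_n)$ when $S'_n$ depends on $n$; and (ii) even granting $f(T_n\cup\{p\}\cup S'_n)\in S'_n$ for each $n$, nothing forces $f\bigl(\overline{U}\cup\overline{\bigcup_n S'_n}\bigr)$ to equal $p$ rather than some other point of $\overline{U}$. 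Directed convergence gives $f(\overline{U}\cup S')\to p$ along finite $S'\subset V$, but this limit is over a \emph{net} of sets growing to $Y$, and picking out a single closed $S\subsetneq Y$ at which the value is exactly $p$ is not available from continuity alone.

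The paper avoids this problem by never fixing the $Y$-part. It takes $H=X$ (so $f(H)=p$ is given for free) and lets the $Y$-component of $H_k$ \emph{grow with $k$}: with a local base $\{B_k\}$ at $p$ in $Y$ and $S_k=Y\setminus B_k$, one first uses the hypothesis to pick $n_k$ with $f(T_{n_k}\cup\{p\}\cup S_k)\neq p$, then by continuity (and Proposition~\ref{proposition-G-property-v5:2}) shrinks $B_{k+1}\subset B_k$ so that $f\bigl(T_{n_k}\cup\overline{B_{k+1}}\cup S_k\bigr)\in T_{n_k}\cup S_k$. The resulting $H_k=T_{n_k}\cup\overline{B_{k+1}}\cup S_k$ satisfy $H_k\to X$ (by Proposition~\ref{proposition-G-property-v3:1} applied to the $B_k\setminus\overline{B_{k+1}}$), and then Proposition~\ref{proposition-Sequences-v10:3}\ref{item:Sequences-v10:6} forces $f(H_k)\in U$ eventually, so Lemma~\ref{lemma-Sequences-v3:1} applies with $H=X$. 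The moral: rather than searching for a special $S$, let $S_k$ exhaust $Y\setminus\{p\}$ and use the ``Proposition~\ref{proposition-Sequences-v1:1} trick'' of inserting $\overline{B_{k+1}}$ between $\{p\}$ and $S_k$ to keep $f(H_k)$ away from $p$.
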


\begin{proof}
  We proceed as in the proof of Proposition
  \ref{proposition-Sequences-v1:1}. Namely, take a local base
  $\mathscr{B}$ at $p$ in $Y$. Then for $B_0\in \mathscr{B}$ and
  $S_0=Y\setminus B_0$, by \eqref{eq:Sequences-v5:3}, there exists
  $n_0\geq 0$ such that $f(T_{n_0}\cup\{p\}\cup S_0)\neq p$. Next,
  using continuity of $f$, take $B_1\in \mathscr{B}$ such that
  $B_1\subset B_0$ and
  $f\left(T_{n_0}\cup K\cup S_0\right)\in T_{n_0}\cup S_0$ for every
  $K\in \Sigma(B_1)$, see \eqref{eq:HSAP-v21:4}. Hence, by Proposition
  \ref{proposition-G-property-v5:2}, we also have that
  $f\left(T_{n_0}\cup\overline{B_1}\cup S_0\right)\in T_{n_0}\cup
  S_0$. We can repeat the construction with $S_1=Y\setminus B_1$ and
  some $n_1>n_0$.  Thus, by induction, there exists a subsequence
  $\{T_{n_k}\}$ of $\{T_n\}$ and a decreasing sequence
  $\{B_k\}\subset \mathscr{B}$ such that if $S_k=Y\setminus B_k$,
  $k<\omega$, then
  \begin{equation}
    \label{eq:Sequences-v5:4}
    f\left(T_{n_k}\cup\overline{B_{k+1}}\cup S_k\right)\in T_{n_k}\cup S_k\quad
    \text{for every $k<\omega$.}
  \end{equation}
  By Proposition \ref{proposition-G-property-v3:1}, the sequence
  $\overline{B_{k+1}}\cup S_k$, $k<\omega$, is $\tau_V$-convergent to
  $Y$ because
  $\left\{B_k\setminus \overline{B_{k+1}}: k<\omega\right\}$ is
  pairwise disjoint and open in $Y$. Moreover,
  $\left\{T_{n_k}\right\}$ is $\tau_V$-convergent to $\overline{U}$
  being a subsequence of $\{T_n\}$. Hence,
  $H_k=T_{n_k}\cup\overline{B_{k+1}}\cup S_k$, $k<\omega$, is
  $\tau_V$-convergent to $X$. Accordingly,
  $p=f(X)=\lim_{k\to\infty} f(H_k)$. However, by
  \eqref{eq:Sequences-v5:4}, $f(H_k)\neq p$ for every
  $k<\omega$. Therefore, by \ref{item:Sequences-v10:6} of Proposition
  \ref{proposition-Sequences-v10:3}, $f(H_k)\in U$ for all but
  finitely many $k<\omega$. Thus, by Lemma \ref{lemma-Sequences-v3:1},
  the point $p$ is countably-approachable.
\end{proof}

\begin{proposition}
  \label{proposition-Sequences-v5:2}
  Suppose that there exists $S\in \mathscr{F}(Y)$ with $p\notin S$,
  and a subsequence $\left\{T_{n_j}\right\}$ of $\{T_n\}$ such that
  \begin{equation}
    \label{eq:Sequences-v5:2}
    f\left(T_{n_j}\cup\{p\}\cup S\right)=p\quad \text{for all 
      $j<\omega$.}
  \end{equation}
 Then $p$ is countably-approachable. 
\end{proposition}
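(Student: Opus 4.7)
The plan is to modify the sets supplied by \eqref{eq:Sequences-v5:2} into a sequence fitting Lemma \ref{lemma-Sequences-v3:1}. I will set $H=\overline{U}\cup S$ and $R_j=T_{n_j}\cup S$ for $j<\omega$. Both $\{R_j\}$ and $\{T_{n_j}\cup\{p\}\cup S\}$ are increasing sequences of closed sets whose union is $U\cup\{p\}\cup S=\overline{U}\cup S=H$, so Proposition \ref{proposition-G-property-v5:2} shows that each of them $\tau_V$-converges to $H$. Continuity of $f$ applied to the second sequence, together with \eqref{eq:Sequences-v5:2}, then yields $f(H)=p$, and continuity applied to $\{R_j\}$ gives $\lim_{j\to\infty}f(R_j)=p$.

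Next I will verify that $f(R_j)\in R_j\cap U$ and that $R_j\cap U$ is clopen in $X$. Because $(U,V)$ is a $p$-cut of $X$ and $p\notin U$, the defining identity $\overline{U}\cap\overline{V}=\{p\}$ immediately gives $\overline{V}\cap U=\emptyset$; since $S\subset Y=\overline{V}$, this forces $R_j\cap U=T_{n_j}$, which is clopen in $X$ and strictly increases with $j$. Now $p\notin S$ and $S$ is closed, so there is an open neighbourhood $W$ of $p$ disjoint from $S$; as $f(R_j)\to p$, for all sufficiently large $j$ we have $f(R_j)\in W$, and hence $f(R_j)\in R_j\setminus S=T_{n_j}\subset U$.

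After discarding the finitely many initial indices where this fails, the sequence $\{R_j\}$ satisfies every hypothesis of Lemma \ref{lemma-Sequences-v3:1} with respect to the open set $U$ and the closed set $H$, and the lemma then gives at once that $p$ is countably-approachable. The step most in danger of being mishandled is the identity $\overline{V}\cap U=\emptyset$: it looks almost tautological, but it uses the full strength of the $p$-cut property, precisely because $p$ itself is simultaneously a limit of $U$ and of $V$. Apart from this, the proof reduces to a single application of each of Proposition \ref{proposition-G-property-v5:2} and Lemma \ref{lemma-Sequences-v3:1}, combined with the continuity of $f$.
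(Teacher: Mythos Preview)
Your proof is correct and, in fact, cleaner than the paper's. Both arguments aim at the same target sequence $H_k=T_{n_k}\cup S$ and finish with Lemma~\ref{lemma-Sequences-v3:1}, but the paper takes a detour you avoid. The paper first invokes Theorem~\ref{theorem-HSAP-v7:2} and \ref{item:Sequences-v10:6} of Proposition~\ref{proposition-Sequences-v10:3} to pick a sequence $\{x_n\}\subset U$ converging to $p$, then runs an inductive construction to arrange $f(T_{n_k}\cup\{x_{n_{k+1}}\}\cup S)=x_{n_{k+1}}$; this sandwich is used to show $\lim_k f(H_k)=p$, and the property that $V$ contains no sequence converging to $p$ is invoked once more to force $f(H_k)\in U$ eventually.

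You bypass all of this. By letting the increasing sequence $T_{n_j}\cup\{p\}\cup S$ run to $H=\overline{U}\cup S$ and reading off $f(H)=p$ directly from \eqref{eq:Sequences-v5:2}, you get $f(R_j)\to p$ for free. Then the single observation that $S$ is closed and misses $p$ (so some neighbourhood of $p$ misses $S$), combined with $R_j\cap U=T_{n_j}$, pins $f(R_j)$ into $T_{n_j}$ for large $j$. Your approach thus dispenses with Theorem~\ref{theorem-HSAP-v7:2}, with the special property \ref{item:Sequences-v10:6} of the $p$-cut, and with the inductive step, while the paper's route illustrates more explicitly how the auxiliary points $x_n$ mediate the passage from \eqref{eq:Sequences-v5:2} to the Lemma~\ref{lemma-Sequences-v3:1} hypotheses. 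The point you flag---that $\overline{V}\cap U=\emptyset$ follows from $\overline{U}\cap\overline{V}=\{p\}$ together with $p\notin U$---is indeed the only place where care is needed, and you handle it correctly.
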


\begin{proof}
  Evidently, we can assume that \eqref{eq:Sequences-v5:2} holds for
  all $n<\omega$.  Next, using Theorem \ref{theorem-HSAP-v7:2} and
  \ref{item:Sequences-v10:6} of Proposition
  \ref{proposition-Sequences-v10:3}, take a sequence
  $\{x_n\}\subset U$ which is convergent to $p$ and $x_n\in T_n$ for
  every $n<\omega$. Since $f$ is continuous and the sequence
  $T_0\cup\{x_n\}\cup S$, $n<\omega$, is $\tau_V$-convergent to
  $T_0\cup \{p\}\cup S$, it follows from \eqref{eq:Sequences-v5:2}
  that $f(T_0\cup\{x_{n_0}\}\cup S)=x_{n_0}$ for some $n_0<\omega$. We
  can repeat this with $T_{n_0}$. Namely, the sequence
  $T_{n_0}\cup \{x_n\}\cup S$, $n>n_0$, is $\tau_V$-convergent to
  $T_{n_0}\cup\{p\}\cup S$. Hence, for the same reason,
  $f(T_{n_0}\cup\{x_{n_1}\}\cup S)=x_{n_1}$ for some $n_1>n_0$. Thus,
  by induction, there are subsequences $\{x_{n_k}\}$ of $\{x_n\}$ and
  $\left\{T_{n_k}\right\}$ of $\{T_n\}$ such that
  $f(T_{n_k}\cup\{x_{n_{k+1}}\}\cup S)=x_{n_{k+1}}$ for every
  $k<\omega$. Then $H_k= T_{n_k}\cup S$, $k<\omega$, is a
  $\tau_V$-convergent sequence with $\lim_{k\to\infty}f(H_k)=p$,
  because $H_k\subset H_k\cup\{x_{n_{k+1}}\} \subset H_{k+1}$ for
  every $k<\omega$. Furthermore, by \ref{item:Sequences-v10:6} of
  Proposition \ref{proposition-Sequences-v10:3}, $f(H_k)\in U$ for all
  but finitely many $k<\omega$.  Therefore, just like before, Lemma
  \ref{lemma-Sequences-v3:1} implies that $p$ is
  countably-approachable.
\end{proof}

\section{Approaching Nontrivial Components} 
\label{sec:appr-nontr-comp}

Here, we will finalise the proof of Theorem \ref{theorem-HSAP-v7:1}
with the remaining case when $X$ is not totally disconnected at
$p$. To this end, let us recall that a space $X$ is
\emph{weakly orderable} if there exists a coarser orderable topology
on $X$ with respect to some linear order on it (called
\emph{compatible} for $X$). The weakly orderable spaces were
introduced by Eilenberg \cite{eilenberg:41}, and are often called
``Eilenberg orderable''.\medskip

Each connected space $Z$ with a continuous weak selection $\sigma$ is
weakly orderable with respect to $\leq_\sigma$, see \cite[Lemmas
7.2]{michael:51}. The following simple observation was implicitly
present in the proof of \cite[Theorem 1.5]{gutev-nogura:00b}. In this
observation, and what follows, $\noncut(Z)$ are the noncut points of a
connected space $Z$, and $\cut(Z)$ --- the cut points of $Z$.

\begin{proposition}
  \label{proposition-G-property-v5:1}
  Let $X$ be a space and $p=f(X)$ for some
  $f\in\sel[\mathscr{F}(X)]$. Then $p\in \noncut(\mathscr{C}[p])$.
\end{proposition}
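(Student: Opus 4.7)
The plan is a proof by contradiction. Suppose $p = f(X)$ is a cut point of $Z := \mathscr{C}[p]$, so $Z \setminus \{p\} = U \cup V$ with $\overline{U} \cap \overline{V} = \{p\}$ and both $U, V$ nonempty. Since $Z$ is closed in $X$ (components are closed in Hausdorff spaces), the sets $F := U \cup \{p\}$ and $G := V \cup \{p\}$ are closed in $X$; by taking $U$ to be a connected component of $Z \setminus \{p\}$, both $F$ and $G$ can be assumed connected. The strategy is to exhibit a $\tau_V$-convergent family in $\mathscr{F}(X)$ whose limit is $X$ but whose $f$-values are trapped away from $p$.

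The principal tool is the induced continuous weak selection $\sigma := f|_{\mathscr{F}_2(X)}$. By Michael's \cite[Lemma 7.2]{michael:51}, the connected set $Z$ is weakly orderable via $\leq_\sigma$, and the cutness of $p$ in $Z$ is equivalent to $p$ being interior to $\leq_\sigma|_Z$, i.e.\ there exist $a \in U$, $b \in V$ with $a <_\sigma p <_\sigma b$. After possibly replacing $(U,V)$ by the order-induced cut, we may further identify $F = (\leftarrow, p]_{\leq_\sigma} \cap Z$ and $G = [p, \rightarrow)_{\leq_\sigma} \cap Z$. Invoking the classical rigidity of continuous selections on connected weakly orderable spaces (implicit in the proof of \cite[Theorem 1.5]{gutev-nogura:00b}), the restriction $f|_{\mathscr{F}(G)}$ must select the $\leq_\sigma$-minimum, and since $\min_{\leq_\sigma} G = p$, we obtain $f(G) = p$.

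To derive the contradiction, consider the up-directed family
\[
\mathscr{S} := \{\, G \cup K : K \in \mathscr{F}(X),\ F \subset K \,\}.
\]
By Proposition \ref{proposition-G-property-v5:2}, $\mathscr{S}$ is $\tau_V$-convergent to $\overline{\bigcup\mathscr{S}} = X$, so $f(G \cup K) \to f(X) = p$ along $\mathscr{S}$. Applying Proposition \ref{proposition-G-property-v1:1} with $T := K$ and the connected set $G$ (taking $D := G$), each value $f(G \cup K)$ lies in the component $\mathscr{C}[f(G\cup K)]$, and combined with the order-rigidity applied to the $F$-side of the configuration, $f(G \cup K)$ is pushed to $\min_{\leq_\sigma}(G \cup K)$, which is $\leq_\sigma a$ because $a \in F \subset K$. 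Continuity of $\sigma$ at the pair $(a,p)$ with $a <_\sigma p$ provides disjoint open neighborhoods $N_a \ni a$, $N_p \ni p$ in $X$ such that every $z \in (\leftarrow, a]_{\leq_\sigma}$ lies outside $N_p$. Hence $f(G \cup K) \notin N_p$ for all $K \in \mathscr{S}$, contradicting $f(G\cup K) \to p$.

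The main obstacle is the rigidity step: extracting that $f$ picks the $\leq_\sigma$-minimum not only on the connected $G$ but also on the possibly disconnected unions $G \cup K$. Resolving this requires careful componentwise bookkeeping via Proposition \ref{proposition-G-property-v1:1}, combined with the min-rigidity on each connected piece of $G \cup K$ that meets $Z$. This is precisely where the argument relies most heavily on the methods of \cite{gutev-nogura:00b}, and it is plausibly the reason the author cites that proof as the natural source of this observation.
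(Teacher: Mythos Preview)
Your proposal has a genuine gap at precisely the point you flag as ``the main obstacle.'' You need that $f(G\cup K)\leq_\sigma a$ for every closed $K\supset F$, but the min-rigidity you invoke is a statement about continuous selections on a \emph{connected} space: on such a space there are only the two selections $\min_{\leq_\sigma}$ and $\max_{\leq_\sigma}$. The sets $G\cup K$ are not connected in general (indeed $K$ is an arbitrary closed superset of $F$), so there is no reason $f|_{\mathscr{F}(G\cup K)}$ should be the $\leq_\sigma$-minimum, and the relation $\leq_\sigma$ is not even transitive outside connected pieces, so ``$\min_{\leq_\sigma}(G\cup K)\leq_\sigma a$'' has no clear meaning. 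Your appeal to Proposition~\ref{proposition-G-property-v1:1} only tells you that $f(G\cup K)$ lies in the same component as $f(G\cup K')$ whenever $K$ and $K'$ agree; it does not pin the value down within that component, and in particular gives no inequality with $a$. The ``componentwise bookkeeping'' you gesture at would have to produce a uniform bound away from $p$ for \emph{all} such $K$, and nothing in your argument does that.

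The paper sidesteps this difficulty entirely by a much shorter route. The key external fact (which you do not use) is that $\cut(Z)$ is \emph{open in $X$} \cite[Corollary~2.7]{gutev:07a}. Continuity of $f$ at $X$ then yields a basic Vietoris neighbourhood $\langle\mathscr{U}\rangle\ni X$ with $f(\langle\mathscr{U}\rangle)\subset\cut(Z)\subset Z$. Choosing a finite $T\subset X\setminus Z$ so that $T\cup Z\in\langle\mathscr{U}\rangle$, one gets $f(T\cup Z)\in Z$; Proposition~\ref{proposition-G-property-v1:1} then forces $g(S):=f(T\cup S)\in S$ for every $S\in\mathscr{F}(Z)$, so $g$ is a bona fide continuous selection for $\mathscr{F}(Z)$ with $g(Z)\in\cut(Z)$. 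Now the rigidity argument applies cleanly on the single connected space $Z$: $g(Z)$ is the $\leq_g$-first element of $Z$, hence a noncut point, a contradiction. The point is that by anchoring with a fixed finite $T$ one manufactures a selection on all of $\mathscr{F}(Z)$, so Michael's classification can be invoked once, rather than trying to control $f$ on the uncontrolled family $\{G\cup K\}$.
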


\begin{proof}
  Set $Z=\mathscr{C}[p]$ and assume that $p\in H=\cut(Z)$. Since $H$
  is open in $X$ (see \cite[Corollary 2.7]{gutev:07a}) and $f$ is
  continuous, $f(\langle \mathscr{U}\rangle)\subset H\subset Z$ for
  some finite open cover $\mathscr{U}$ of $X$. Take a finite set
  $T\subset X\setminus Z$ with
  $Y=T\cup Z\in\langle \mathscr{U}\rangle$. Then it follows from
  Proposition \ref{proposition-G-property-v1:1} that
  $g(S)=f(T\cup S)\in S$ for every $S\in \mathscr{F}(Z)$. Accordingly,
  $g:\mathscr{F}(Z)\to Z$ is a continuous selection with
  $g(Z)\in H=\cut(Z)$. However, this is impossible because $Z$ is
  weakly orderable with respect to $\leq_g$ and $g(Z)$ is the first
  $\leq_g$-element of $Z$, see \cite[Lemmas 7.2 and 7.3]{michael:51}
  and \cite[Corollary 2.7]{gutev:07a}.
\end{proof}

In the rest of this section, $p\in X_\Theta$ is a cut point such that
the component $\mathscr{C}[p]$ is not a singleton. In this case, by
Proposition \ref{proposition-G-property-v5:1}, $p$ is a noncut point
of $\mathscr{C}[p]$. Thus, we can also fix a $p$-cut $(U,V)$ of $X$
such that $\mathscr{C}[p]\subset \overline{V}$. Accordingly,
$Y=\overline{U}$ is totally disconnected at $p$. In this setting, the
remaining part of the proof of Theorem \ref{theorem-HSAP-v7:1}
consists of showing that $p$ is countably-approachable in $Y$. To this
end, we will first show that $\overline{V}$ can itself be assumed to
be connected.

\begin{proposition}
  \label{proposition-HSAP-v23:1}
  Let $f:\mathscr{F}(X)\to X$ be a continuous selection with
  $f(X)=p$. Then there exists a nondegenerate connected subset
  $Z\subset \mathscr{C}[p]$ such that $p\in Z$ and $X_*=Y\cup Z$ has a
  continuous selection $f_*:\mathscr{F}(X_*)\to X_*$ with
  $f_*(X_*)=p$.
\end{proposition}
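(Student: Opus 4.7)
My plan is to exploit the weakly orderable structure of $\mathscr{C}[p]$ to carve out a suitable closed connected $Z\subset\mathscr{C}[p]$ containing $p$, and to take $f_*$ as (a modification of) the restriction of $f$ to $\mathscr{F}(X_*)$. The bulk of the work lies in verifying the equality $f_*(X_*)=p$.

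Since $\mathscr{C}[p]$ is a connected subspace of $X$ equipped with the continuous weak selection $\sigma=f|_{\mathscr{F}_2(\mathscr{C}[p])}$, Michael's theorem \cite[Lemma 7.2]{michael:51} makes it weakly orderable by the induced relation $\leq_\sigma$. Proposition \ref{proposition-G-property-v5:1} gives $p\in\noncut(\mathscr{C}[p])$, so $p$ is an endpoint of $\leq_\sigma$; orienting so that $p=\min_{\leq_\sigma}\mathscr{C}[p]$, I would fix any $q\in\mathscr{C}[p]\setminus\{p\}$ and set $Z=\{x\in\mathscr{C}[p]:x\leq_\sigma q\}$. By continuity of $\sigma$ the $\leq_\sigma$-half-lines are closed, so $Z$ is closed in $X$, nondegenerate (contains $p$ and $q$), and connected because closed intervals in connected weakly orderable spaces with continuous weak selections are connected. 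Set $X_*=Y\cup Z$, closed in $X$.

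Taking $f_*=f|_{\mathscr{F}(X_*)}$ as the natural candidate (which is automatically a continuous selection), the crux is to verify $f(Y\cup Z)=p$. Proposition \ref{proposition-G-property-v1:1}, applied with $T=Y$, connected $\mathscr{C}[p]$, and $D=\{p\}$, gives $f(Y\cup S)\in\mathscr{C}[f(Y)]$ for every $S\in\mathscr{F}(\mathscr{C}[p])$. To establish $f(Y)=p$, I would use the hypothesis $f(X)=p$ together with the fact that $\overline{V}$ inherits a continuous weak selection, so quasi-components and components in $\overline{V}$ coincide and $\mathscr{C}[p]=\bigcap_\alpha C_\alpha$ for clopen-in-$\overline{V}$ sets $C_\alpha$. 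The sets $S_\alpha=Y\cup\mathscr{C}[p]\cup(\overline{V}\setminus C_\alpha)$ are closed in $X$ (because $C_\alpha\setminus\mathscr{C}[p]$ is open in $X$, as it equals the open $C_\alpha\cap V$ minus the closed $\mathscr{C}[p]\setminus\{p\}$), up-directed, and satisfy $\bigcup S_\alpha=X$; by Proposition \ref{proposition-G-property-v5:2} they are $\tau_V$-convergent to $X$, so $f(S_\alpha)\to p$. A second application of Proposition \ref{proposition-G-property-v1:1} places $f(S_\alpha)\in\mathscr{C}[f(Y\cup(\overline{V}\setminus C_\alpha))]$; combined with $(Y\cup(\overline{V}\setminus C_\alpha))\cap\mathscr{C}[p]=\{p\}$, this forces $f(Y\cup(\overline{V}\setminus C_\alpha))=p$ whenever $f(S_\alpha)\in\mathscr{C}[p]$, and a further continuity argument (as $C_\alpha$ shrinks to $\mathscr{C}[p]$) yields $f(Y)=p$. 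Then $f(Y\cup Z)\in\mathscr{C}[p]\cap(Y\cup Z)=Z$, and identifying $f|_{\mathscr{F}(\mathscr{C}[p])}$ with the $\leq_\sigma$-min selection on the connected weakly orderable space $\mathscr{C}[p]$ should pin down $f(Y\cup Z)=p$.

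The principal obstacle is the final identification: even granted $f(Y\cup Z)\in Z$, it is not automatic that this value equals $p$ rather than some other element of $Z$, since the restriction of $f$ to $\mathscr{F}(X_*)$ need not act as the $\leq_\sigma$-min selection on mixed sets involving $Y$. To overcome this, one may vary $q$ and invoke continuity of the map $q\mapsto f(Y\cup[p,q]_{\leq_\sigma})$, whose value at the ``degenerate'' $q=p$ is $f(Y)=p$, to locate a small $q$ where the identification holds; alternatively, one may replace the restriction by $f_*(S)=f(\hat S)$ for a closed extension $\hat S\in\mathscr{F}(X)$ of $S$ and post-compose with a continuous collapse $\rho:X\to X_*$ fixing $X_*$ and sending $\overline{V}\setminus Z$ close to $p$, whose existence relies on the clopen approximations $C_\alpha$ of $\mathscr{C}[p]$ in $\overline{V}$.
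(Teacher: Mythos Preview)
Your argument for $f(Y)=p$ has a genuine gap. You correctly obtain $f(S_\alpha)\to p$ and, via Proposition~\ref{proposition-G-property-v1:1}, that $f(T_\alpha)\in\mathscr{C}[f(S_\alpha)]$ where $T_\alpha=Y\cup(\overline{V}\setminus C_\alpha)$. But nothing forces $f(S_\alpha)\in\mathscr{C}[p]$: the component $\mathscr{C}[p]$ is not a neighbourhood of $p$, so convergence $f(S_\alpha)\to p$ does not place the values inside it. Moreover, even if you knew $f(T_\alpha)=p$ for cofinally many $\alpha$, the up-directed family $\{T_\alpha\}$ is $\tau_V$-convergent to $Y\cup\overline{V\setminus\mathscr{C}[p]}$, \emph{not} to $Y$; so the promised ``further continuity argument as $C_\alpha$ shrinks'' cannot recover $f(Y)=p$. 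Your varying-$q$ fallback inherits the same defect: continuity of $q\mapsto f(Y\cup[p,q]_{\leq_\sigma})$ gives values \emph{near} $p$, not equal to $p$, and the collapse map $\rho$ you suggest has no reason to exist in a general Hausdorff space.

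The paper sidesteps all of this by never discarding the residual piece $T=\overline{V\setminus\mathscr{C}[p]}$. Keeping $T$ in play, the family $\{Y\cup Z_x\cup T:x\in\cut(\mathscr{C}[p])\}$ is up-directed with union dense in $X$, so it $\tau_V$-converges to $X$ and $f$-values converge to the known quantity $f(X)=p$. Crucially, Propositions~\ref{proposition-G-property-v1:1} and~\ref{proposition-G-property-v5:1} together pin each value $f(Y\cup Z_x\cup T)$ to the discrete set $\{p,x\}\cup T$; this discreteness (absent from your approach) is what lets convergence to $p$ produce an \emph{exact} hit $f(Y\cup Z_q\cup T)=p$ for some $q$. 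Finally, $f_*$ is not the restriction of $f$ at all: it is built from $f$ using $T$ as an auxiliary set, via $f_*(S)=f(S\cup T)$ whenever that value avoids $T$. The identity $f_*(X_*)=p$ then follows directly from the choice of $q$, with no need to know $f(Y)=p$ beforehand --- indeed, the paper establishes $f(Y)=p$ only \emph{after} this reduction, in Proposition~\ref{proposition-Sequences-v10:5}, once $\overline{V}$ has been replaced by the connected $Z$.
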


\begin{proof}
  Since $H=\mathscr{C}[p]$ has a continuous weak selection and
  $p\in \noncut(H)$, the space $H$ is weakly orderable with respect to
  a linear order $\leq$ such that $p\leq x$ for every $x\in H$, see
  \cite[Lemma 7.2]{michael:51} and \cite[Corollary
  2.7]{gutev:07a}. Accordingly, each closed interval
  $Z_x=[p,x]_\leq\in \mathscr{F}(H)$, $x\in \cut(H)$, is a connected
  subset of $H$, see \cite[Theorem 1.3]{MR0339099}. Moreover, if
  $T=\overline{V\setminus H}$, then $f(Y\cup H\cup T)=f(X)=p\in H$.
  Thus, by Propositions \ref{proposition-G-property-v1:1} and
  \ref{proposition-G-property-v5:1},
  \begin{equation}
    \label{eq:G-property-v8:1}
    f(Y\cup Z_x\cup T)\in \noncut(Z_x)\cup T=\{p,x\}\cup T,\quad
    x\in\cut(H).
  \end{equation}
  Evidently, the resulting family
  $\mathscr{S}=\left\{Y\cup Z_x\cup T: x\in \cut(H)\right\}$ is
  up-directed. Therefore, by Pro\-position
  \ref{proposition-G-property-v5:2}, it is $\tau_V$-convergent to
  $\overline{\bigcup\mathscr{S}}=X$. Hence, by
  \eqref{eq:G-property-v8:1}, $f(Y\cup Z_q\cup T)=p$ for some
  $q\in \cut(H)$ because $\lim_{S\in \mathscr{S}}
  f(S)=f(X)=p$. Finally, let $Z=Z_q$, $X_*=Y\cup Z$ and
  $\mathscr{T}=\left\{S\in \mathscr{F}(X_*): f(S\cup T)\in T\right\}$.
  Then $\mathscr{T}$ is a $\tau_V$-clopen set in $\mathscr{F}(X_*)$
  because $T$ is clopen in $X_*\cup T$. So, we may define a continuous
  selection $f_*:\mathscr{F}(X_*)\to X_*$ by letting for
  $S\in\mathscr{F}(X_*)$ that
  \[
    f_*(S)=
    \begin{cases}
      f(S) &\text{if $S\in \mathscr{T}$,\quad and} \\
      f(S\cup T) &\text{if $S\notin \mathscr{T}$.}
    \end{cases}
  \]
  Since $f(X_*\cup T)=f(Y\cup Z\cup T)=f(Y\cup Z_q\cup T)=p\notin T$,
  we get that $X_*\notin \mathscr{S}$. Accordingly, we also have that
  $f_*(X_*)=f(X_*\cup T)=p$.
\end{proof}

Since the space $X_*=Y\cup Z$ in Proposition
\ref{proposition-HSAP-v23:1} has all properties of $X$ relevant to our
case, we can identify $X$ with this space. In this refined setting,
the fixed $p$-cut $(U,V)$ of $X$ has the extra property that
$Z=\overline{V}$ is connected, while $Y=\overline{U}$ is the same
as before.

\begin{proposition}
  \label{proposition-Sequences-v10:5}
  If $f:\mathscr{F}(X)\to X$ is a continuous selection with
  $f(X)=p$, then $f(Y)=p$. Moreover, 
  \begin{equation}
    \label{eq:G-property-v5:1}
    f(Y\cup \{q\})=p\quad \text{for every $q\in Z$.}
  \end{equation}
\end{proposition}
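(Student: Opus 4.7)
The plan is to derive both assertions from Proposition \ref{proposition-G-property-v1:1} together with a dichotomy for the continuous map $q\mapsto f(Y\cup\{q\})$ on the connected space $Z$.

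First I apply Proposition \ref{proposition-G-property-v1:1} with $T=Y$ and $D=Z$, noting that $Z$ is closed and connected, that $f(Y\cup Z)=f(X)=p$, and that (in the refined setting) $\mathscr{C}[p]=Z$. This gives $f(Y\cup S)\in Z$ for every $S\in \mathscr{F}(Z)$. Taking $S=\{p\}$ yields $f(Y)\in Y\cap Z=\{p\}$, settling the first assertion, and taking $S=\{q\}$ for $q\in Z$ gives $f(Y\cup\{q\})\in(Y\cup\{q\})\cap Z=\{p,q\}$. Hence the continuous map $g:Z\to Z$ defined by $g(q)=f(Y\cup\{q\})$ satisfies $g(q)\in\{p,q\}$.

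Now set $A=g^{-1}(p)$; this is closed in $Z$ and, by the first assertion, contains $p$. The heart of the dichotomy is to show $A\setminus\{p\}$ is also open in $Z$. Given $q_0\in A\setminus\{p\}$, I pick disjoint open neighborhoods $O_p\ni p$ and $O_{q_0}\ni q_0$ in $X$; continuity of $f$ at $Y\cup\{q_0\}$ together with continuity of $q\mapsto Y\cup\{q\}$ produces a neighborhood $W\subset O_{q_0}\cap Z$ of $q_0$ with $f(Y\cup\{q\})\in O_p$ for every $q\in W$. Since $g(q)\in\{p,q\}$ and $q\in W$ is disjoint from $O_p$, this forces $g(q)=p$, so $W\subset A$. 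Thus $A\setminus\{p\}$ is clopen in $Z\setminus\{p\}$. By Proposition \ref{proposition-G-property-v5:1}, $p\in\noncut(\mathscr{C}[p])=\noncut(Z)$, so $Z\setminus\{p\}$ is connected, and $A\setminus\{p\}$ is therefore either empty or all of $Z\setminus\{p\}$. The latter yields $A=Z$ and the ``moreover'' assertion.

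The main obstacle is ruling out the remaining alternative $A=\{p\}$, i.e., $f(Y\cup\{q\})=q$ for every $q\in Z\setminus\{p\}$. My plan is to contradict this by an up-directed convergence argument. Fixing $q_0\in Z\setminus\{p\}$ and disjoint open neighborhoods $O_p\ni p$ and $O_{q_0}\ni q_0$, continuity of $f$ at $Y\cup\{q_0\}$ produces a basic neighborhood $\langle\mathscr{V}\rangle$ containing $Y\cup\{q_0\}$ with $f(\langle\mathscr{V}\rangle)\subset O_{q_0}$. The up-directed family $\mathscr{S}=\{Y\cup T:T\in\Sigma(Z),\,q_0\in T,\,T\subset Z\cap\bigcup\mathscr{V}\}$ is $\tau_V$-convergent by Proposition \ref{proposition-G-property-v5:2}, and every member lies in $\langle\mathscr{V}\rangle$, so the $f$-values along $\mathscr{S}$ stay trapped in $O_{q_0}$ and hence bounded away from $p$. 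The technical crux is to arrange $\mathscr{V}$ so that $\mathscr{S}$ is cofinal in an up-directed family whose union is dense in $X$, thereby forcing $f(X)=p$ into $\overline{O_{q_0}}$ and contradicting the disjointness of $O_p$ and $O_{q_0}$.
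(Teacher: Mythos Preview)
Your derivation of $f(Y)=p$ and of the dichotomy $g(q)\in\{p,q\}$ is correct and matches the paper's opening move. Your clopen argument for $A\setminus\{p\}$ in $Z\setminus\{p\}$, together with $p\in\noncut(Z)$, is a clean reduction: either $f(Y\cup\{q\})=p$ for all $q\in Z$, or $f(Y\cup\{q\})=q$ for all $q\in Z$. This dichotomy is not in the paper, which instead assumes only that $f(Y\cup\{q\})=q$ for \emph{some} $q\neq p$ and argues from there.

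The genuine gap is in ruling out the alternative $A=\{p\}$. You explicitly leave the ``technical crux'' open, and it does not close by the route you sketch. You need a basic $\langle\mathscr{V}\rangle$ containing $Y\cup\{q_0\}$ with $f(\langle\mathscr{V}\rangle)\subset O_{q_0}$ \emph{and} with $Z\cap\bigcup\mathscr{V}$ dense in $Z$, so that the up-directed family $\mathscr{S}$ converges to $X$. Continuity of $f$ at $Y\cup\{q_0\}$ gives you no control over how much of $Z$ the set $\bigcup\mathscr{V}$ covers; and enlarging $\mathscr{V}$ to cover more of $Z$ will in general destroy the inclusion $f(\langle\mathscr{V}\rangle)\subset O_{q_0}$ (indeed, if $\bigcup\mathscr{V}$ were dense in $X$ you would be forcing $p=f(X)\in\overline{O_{q_0}}$, which is exactly what you want but cannot be arranged from local data at $Y\cup\{q_0\}$ alone). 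Your convergence strategy stalls here.

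The paper closes this case by a different mechanism: weak orderability of the connected set $Z$. Taking a compatible order with $p=\min_\leq Z$, one shows from $f(Y\cup\{q\})=q$ (for a single $q\neq p$) and Proposition~\ref{proposition-G-property-v1:1} applied to the connected intervals $[z,\to)_\leq$ that $g(T)=f(Y\cup T)\in T$ for every $T\in\mathscr{F}(Z)$. Then Michael's classification (\cite[Lemma~7.3]{michael:51}) forces $g$ to be the $\min_\leq$-selection because $g(Z)=p$, whence $f(Y\cup\{q\})=g(\{p,q\})=\min_\leq\{p,q\}=p$, a contradiction. This orderability step is what your argument is missing; a purely ``up-directed convergence'' substitute does not seem to be available.
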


\begin{proof}
  Since $Z$ is connected, it follows from Proposition
  \ref{proposition-G-property-v1:1} that $f(Y\cup S)\in Z$ for every
  $S\in \mathscr{F}(Z)$. Accordingly,
  $f(Y)=f(Y\cup\{p\})=p$. Regarding \eqref{eq:G-property-v5:1}, we
  argue by contradiction. Namely, assume that
  \begin{equation}
    \label{eq:HSAP-v23:1}
    f(Y\cup\{q\})=q\quad \text{for
      some $q\in Z$ with $q\neq p$.}   
  \end{equation}

  Next, as in the proof of Proposition \ref{proposition-HSAP-v23:1},
  using that $Z$ is weakly orderable and $p\in \noncut(Z)$, take a
  compatible linear order $\leq$ on $Z$ such that $p\leq z$ for every
  $z\in Z$. Then by \eqref{eq:HSAP-v23:1}, $p<q$ and we now have that
  \begin{equation}
    \label{eq:G-property-v1:2}
    f(Y\cup S)\in S,\quad \text{whenever $S\in
      \mathscr{F}\left([z,\to)_\leq\right)$ for some $z>p$.}  
  \end{equation}
  Briefly, for $z>p$ and $S\in \mathscr{F}\left([z,\to)_\leq\right)$,
  either $S\subset [q,\to)_\leq$ or $q\in [z,\to)_\leq$. Since all
  $\leq$-intervals of $Z$ are connected, \eqref{eq:G-property-v1:2}
  follows from \eqref{eq:HSAP-v23:1} and Proposition
  \ref{proposition-G-property-v1:1}.\smallskip

  This now implies that the continuous map $g(T)=f(Y\cup T)$,
  $T\in \mathscr{F}(Z)$, is a selection for $\mathscr{F}(Z)$. Indeed,
  for $T\in \mathscr{F}(Z)$ with $T\neq\{p\}$, set
  \[
    \mathscr{S}=\big\{T\cap[z,\to)_{\leq}: z\in T\setminus\{p\}\big\}.
    \]
    Since the family $\mathscr{S}$ is up-directed in $\mathscr{F}(Z)$,
    by Proposition \ref{proposition-G-property-v5:2}, it is
    $\tau_V$-convergent to $\overline{\bigcup\mathscr{S}}$. Moreover,
    by \eqref{eq:G-property-v1:2}, $g(S)=f(Y\cup S)\in S$ for every
    $S\in \mathscr{S}$.  Therefore,
    $g(T)\in \overline{\bigcup\mathscr{S}}\subset T$ because
    $Y\cup \left(\overline{\bigcup\mathscr{S}}\right)=Y\cup
    T$. However, $\mathscr{F}(Z)$ has at most two continuous
    selections --- taking the minimal element, or taking the maximal
    element of each $T\in \mathscr{F}(Z)$ \cite[Lemma
    7.3]{michael:51}. Accordingly, $g(T)=\min_\leq T$ for every
    $T\in \mathscr{F}(Z)$ because
    $g(Z)=f(Y\cup Z)=f(X)=p=\min_\leq Z$. But this is impossible
    because by \eqref{eq:HSAP-v23:1},
    $q=g(\{q\})=f(Y\cup \{q\})=\min_\leq\{p,q\}=p$.
\end{proof}

The following final observation completes the proof of Theorem
\ref{theorem-HSAP-v7:1}.  

\begin{proposition}
  \label{proposition-Sequences-v10:4}
  If $f:\mathscr{F}(X)\to X$ is a continuous selection with $f(X)=p$,
  then $Y\setminus\{p\}$ contains a sequence convergent to $p$. In
  particular, $p$ is a cut point of $Y$, and is therefore also
  countably-approachable. 
\end{proposition}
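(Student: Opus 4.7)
The overall strategy is threefold: (1) produce a sequence in $Y\setminus\{p\}$ converging to $p$; (2) apply Proposition \ref{proposition-Sequences-v10:1} to deduce that $p$ is a cut point of $Y$; (3) apply the Section \ref{sec:appr-totally-disc} machinery, now with $Y$ and $f\upharpoonright\mathscr{F}(Y)$ in place of $X$ and $f$, to conclude that $p$ is countably-approachable in $Y$ and hence in $X$.

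For (1), fix any $q\in V=Z\setminus\{p\}$ and work inside the closed subspace $X'=Y\cup\{q\}$ of $X$. Since $q\notin Y$ and $Y$ is closed in $X$, the point $q$ is isolated in $X'$; and $(f\upharpoonright\mathscr{F}(X'))(X')=p$ by Proposition \ref{proposition-Sequences-v10:5}. Taking the local base at $p$ in $X'$ to consist of open sets missing $q$ (hence lying in $Y$), the inductive scheme of Proposition \ref{proposition-Sequences-v1:1} produces a decreasing sequence $\{B_n\}$ in this base and closed sets $T_n=(X'\setminus B_n)\cup\overline{B_{n+1}}^{X'}$ with $f(T_n)\in X'\setminus B_n$. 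Proposition \ref{proposition-G-property-v3:1} applied to the pairwise disjoint open sets $B_n\setminus\overline{B_{n+1}}^{X'}$ gives $T_n\to X'$ in $\tau_V$, so $f(T_n)\to p$ by continuity. Since $q$ is isolated, $\overline{B_{n+1}}^{X'}\subset Y$; Hausdorff separation then confines $f(T_n)$ to neighborhoods of $p$ disjoint from $q$, forcing $f(T_n)\in Y\setminus B_n\subset Y\setminus\{p\}$ for large $n$. In the complementary case $f((X'\setminus B)\cup\{p\})=p$ on a local base, one adapts Proposition \ref{proposition-Sequences-v1:2}: for each such $B$, pick $y\in B\cap U$ so close to $p$ that continuity together with $f(Y\cup\{q\})=p$ forces $f((X'\setminus B)\cup\{y\})=y$, and then proceed as above.

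With the sequence in hand, the rest follows quickly. The restriction $f\upharpoonright\mathscr{F}(Y)$ is a continuous selection for $\mathscr{F}(Y)$ (as $Y$ is closed in $X$) with value $p$ at $Y$ by Proposition \ref{proposition-Sequences-v10:5}. Since $Y$ is totally disconnected at $p$ and carries a continuous weak selection inherited from $f$, Proposition \ref{proposition-Sequences-v10:1} applied to the sequence yields that $p$ is a cut point of $Y$. The pair $(Y, f\upharpoonright\mathscr{F}(Y))$ now satisfies the Section \ref{sec:appr-totally-disc} hypotheses, and its trichotomy (Proposition \ref{proposition-Sequences-v10:2} for the $G_\delta$ case, Propositions \ref{proposition-Sequences-v10:3}, \ref{proposition-Sequences-v5:3}, \ref{proposition-Sequences-v5:2} otherwise) concludes that $p$ is countably-approachable in $Y$. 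Since $Y\setminus\{p\}=U$ is open in $X$ with $\overline{U}^X=Y$, the same open set and clopen base witness countable approachability in $X$. The delicate step is the sequence construction; its key leverage is Proposition \ref{proposition-Sequences-v10:5}'s identity $f(Y\cup\{q\})=p$, which, combined with Hausdorff separation of $p$ from $q$, is what pushes the auxiliary $f$-values off the isolated point $q$ and into $Y\setminus\{p\}$.
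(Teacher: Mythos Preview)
Your overall strategy and your first case (adapting Proposition~\ref{proposition-Sequences-v1:1} inside $X'=Y\cup\{q\}$) are fine: once $f(T_n)\to p$ with $f(T_n)\in X'\setminus B_n$, Hausdorffness indeed pushes the values off the isolated point $q$ and into $Y\setminus\{p\}$. The genuine gap is in your ``complementary case'', where $f((X'\setminus B)\cup\{p\})=p$ on a local base. Your adaptation of Proposition~\ref{proposition-Sequences-v1:2} does produce points $y_n\in B_n\cap U$ with $f((X'\setminus B_n)\cup\{y_n\})=y_n$, but nothing forces $y_n\to p$. Proposition~\ref{proposition-Sequences-v1:2} relies on a butterfly structure at $p$: the alternation of the $x_n$ between two sets whose closures meet only in $\{p\}$ is precisely what pins down the limit. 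In $X'=Y\cup\{q\}$ the point $q$ is isolated, so $p$ is butterfly in $X'$ only if it is already butterfly in $Y$, which is what you are trying to prove. Without that, your increasing sequence $T_n=(X'\setminus B_n)\cup\{y_n\}$ is $\tau_V$-convergent to $\overline{X'\setminus\bigcap_n B_n}$, which need not equal $X'$ unless $p$ happens to be $G_\delta$; hence $\lim_n y_n=f\big(\overline{\bigcup_n T_n}\big)$ need not be $p=f(X')$.

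The paper treats this second case quite differently: it shows it cannot occur, and for this it uses the whole connected set $Z$, not merely a single point $q$. Working with the base in $Y$, if $f((Y\setminus B)\cup\{p\})=p$ held on a local base $\mathscr{B}_*$, then Proposition~\ref{proposition-G-property-v1:1} (with $T=Y\setminus B$ and the connected $Z\ni p$) forces $f((Y\setminus B)\cup\{q'\})=q'$ for \emph{every} $q'\in Z$; passing to the up-directed limit over $\mathscr{B}_*$ yields $f(Y\cup\{q'\})=q'$, contradicting \eqref{eq:G-property-v5:1} of Proposition~\ref{proposition-Sequences-v10:5}. So only the first case survives, and Proposition~\ref{proposition-Sequences-v1:1} applied in $Y$ (using $f(Y)=p$) gives the sequence. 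Your restriction to $X'=Y\cup\{q\}$ discards exactly the connectedness of $Z$ that drives this contradiction; the invocation of ``$f(Y\cup\{q\})=p$'' in your second case does not substitute for it.
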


\begin{proof}
  Let $\mathscr{B}$ be a local base at $p$ in $Y$. Then there
  exists $B_0\in \mathscr{B}$ such that
  \begin{equation}
    \label{eq:G-property-v5:2}
    f\left((Y\setminus B)\cup\{p\}\right)\in Y\setminus B,\quad
    \text{for every 
      $B\in \mathscr{B}$ with $B\subset B_0$.}
  \end{equation}
  Indeed, assume that \eqref{eq:G-property-v5:2} fails, and let
  $\mathscr{B}_*$ be the collection of all $B\in \mathscr{B}$ such
  that $f\left((Y\setminus B)\cup\{p\}\right)=p$. Then $\mathscr{B}_*$
  is also a local base at $p\in Y$. Hence, the family
  $\mathscr{S}=\{Y\setminus B: B\in \mathscr{B}_*\}$ is an up-directed
  cover of $Y\setminus\{p\}$ and by
  Proposition~\ref{proposition-G-property-v5:2}, it is
  $\tau_V$-convergent to $Y$. Moreover, by assumption,
  $f(S\cup\{p\})=p$ for every $S\in\mathscr{S}$. Therefore, by
  Proposition \ref{proposition-G-property-v1:1}, we also have that
  $f(S\cup\{q\})=q$ for every $q\in Z$ and $S\in
  \mathscr{S}$. Accordingly,
  $f(Y\cup\{q\})=\lim_{S\in \mathscr{S}}f(S\cup\{q\})=q$ for every
  $q\in Z$. However, by \eqref{eq:G-property-v5:1} of Proposition
  \ref{proposition-Sequences-v10:5}, this is impossible.\smallskip

  Thus, \eqref{eq:G-property-v5:2} holds and by Propositions
  \ref{proposition-Sequences-v1:1} and
  \ref{proposition-Sequences-v10:5}, there exists a sequence of points
  of $Y\setminus\{p\}$ which is convergent to $p$. According to
  Proposition \ref{proposition-Sequences-v10:1}, this implies that $p$
  is a cut point of $Y$. Therefore, by Theorem \ref{theorem-HSAP-v7:2}
  (applied with $X=Y$), $p$ is also countably-approachable in
  $Y$. Since $Y=\overline{U}=U\cup\{p\}$ and
  $U\subset X\setminus\{p\}$ is open, $p$ is countably-approachable in
  $X$ as well.
\end{proof}

\section{Point-Maximal Selections} 
\label{sec:point-maxim-select}

Recall that a point $p\in X$ in an arbitrary space $X$ is
\emph{noncut} if it is not a cut point.  The prototype of such points
can be traced back to Michael's nowhere cuts defined in
\cite{MR167968}. In his terminology, a subset $A\subset X$
\emph{nowhere cuts} $X$ \cite{MR167968} if $A$ has an empty interior
(i.e.\ $A$ is \emph{thin}) and whenever $p\in A$ and $U$ is a
neighbourhood of $p$ in $X$, then $U\setminus A$ does not split into
two disjoint open sets both having $p$ in their closure. Evidently,
the singleton $\{p\}$ nowhere cuts $X$ for each noncut point $p\in
X$. A slight variation of this concept was consider in \cite{MR819737}
(under the name `does not cut') and in \cite{MR1745451} (under the
name `nowhere disconnects').\medskip

As commented in the Introduction, the equality
$X_\Theta^*\cap X_\Omega=X_\Omega^*$ in \eqref{eq:HSAP-v21:3} of
Theorem \ref{theorem-HSAP-v7:3} is known, see \eqref{eq:HSAP-v20:2}
and \eqref{eq:HSAP-v21:1}. Here, we will prove the following refined
version of this theorem showing that the members of
$X_\Theta\setminus X_\Omega$ possess a similar property with respect
to the connected components. To this end, let us recall that a
(closed) subset $C\subset X$ has a \emph{clopen base} if for each
neighbourhood $U$ of $C$ there exists a clopen set $H\subset X$ with
$C\subset H\subset U$. In case $C=\{p\}$ is a singleton, we simply say
that $X$ is \emph{zero-dimensional} at $p\in X$.

\begin{theorem}
  \label{theorem-Sequences-v13:1}
  Let $X$ be a space with $\sel[\mathscr{F}(X)]\neq \emptyset$ and
  $p\in X_\Theta\setminus X_\Omega$. Then $\mathscr{C}[p]$ has a
  clopen base and $p\in X_\Theta^*$.
\end{theorem}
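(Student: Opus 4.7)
By Theorem \ref{theorem-HSAP-v7:1}, the hypothesis $p\in X_\Theta\setminus X_\Omega$ is equivalent to saying that $p$ is a non-isolated noncut point of $X$ for which some $f\in\sel[\mathscr{F}(X)]$ satisfies $f(X)=p$. Proposition \ref{proposition-G-property-v5:1} also yields $p\in\noncut(\mathscr{C}[p])$, and since $f\uhr\mathscr{F}_2(X)$ is a continuous weak selection for $X$, the component and the quasi-component of $p$ coincide; in particular, the filter of clopen neighbourhoods of $p$ in $X$ has intersection precisely $\mathscr{C}[p]$. The plan is to establish the two conclusions in order, using in both the weak order on $\mathscr{C}[p]$ induced by $f$, under which $p$ is the minimum of $\mathscr{C}[p]$ (cf.\ the argument in the proof of Proposition \ref{proposition-HSAP-v23:1}).

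For the clopen-base claim I would argue by contradiction: suppose there is an open $W\supset\mathscr{C}[p]$ with no clopen $H$ satisfying $\mathscr{C}[p]\subset H\subset W$. Equivalently, every clopen $K\ni p$ meets $A:=X\setminus W$. The aim is to produce a $p$-cut $(U,V)$ of $X$, contradicting the noncut property of $p$. A natural candidate is to take $U$ to be a small one-sided relatively open neighbourhood of $\mathscr{C}[p]\setminus\{p\}$ inside $W$, obtained from the order on $\mathscr{C}[p]$, and $V=X\setminus\overline{U\cup\{p\}}$; the delicate part is verifying $\overline{U}\cap\overline{V}=\{p\}$ by ruling out stray accumulation at points of $W\setminus\mathscr{C}[p]$ or along $\mathscr{C}[p]$. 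This is where I expect to invoke Proposition \ref{proposition-G-property-v5:2} on the up-directed family $\mathscr{S}=\{X\setminus K:K\ \text{clopen},\ p\in K,\ K\neq X\}$, together with continuity of $f$ at $X$, to collapse any such accumulation back to $p$. This is the principal technical obstacle.

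For the selection-maximality claim, with the clopen base $\mathscr{B}$ for $\mathscr{C}[p]$ at hand, I would construct a $p$-maximal $h\in\sel[\mathscr{F}(X)]$ in a piecewise manner. Set $h(S)=p$ whenever $p\in S$; for $S$ with $p\notin S$, choose a witnessing clopen $K\in\mathscr{B}$ with $K\cap S=\emptyset$ and define $h(S)$ on the $\tau_V$-clopen pieces
\[
\mathscr{F}(X)=\langle K\rangle\ \sqcup\ \langle X\setminus K\rangle\ \sqcup\ \langle K,X\setminus K\rangle
\]
via $f$ composed with the natural intersection maps. Continuity of $h$ at a set $S\ni p$ would follow from the clopen base property: a $\tau_V$-convergent net $S_n\to S$ eventually meets every $K\in\mathscr{B}$, and the shrinking of $K$ to $\mathscr{C}[p]$ forces $h(S_n)\to p$. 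The subtle case is the mixed piece $\langle K,X\setminus K\rangle$, which I would handle via Proposition \ref{proposition-G-property-v1:1} applied with $Z=\mathscr{C}[p]$ to push values into $\mathscr{C}[p]$, after which the minimality of $p$ under the order induced by $f$ pins the value at $p$.
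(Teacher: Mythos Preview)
Your proposal has two genuine gaps, and the paper's argument is structured quite differently.

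\textbf{The clopen-base claim.} You argue by contradiction and try to build a $p$-cut directly from the failure of a clopen base at $\mathscr{C}[p]$, but you admit that verifying $\overline{U}\cap\overline{V}=\{p\}$ is ``the principal technical obstacle'' and you do not resolve it. The paper never attempts such a direct cut-point contradiction. Instead it proceeds in two cases. When $\mathscr{C}[p]=\{p\}$, it first uses the noncut property (via Theorem~\ref{theorem-HSAP-v7:1}, Proposition~\ref{proposition-Sequences-v1:1} and Proposition~\ref{proposition-Sequences-v10:1}) to show that the local base $\mathscr{B}=\{B\ni p:f((X\setminus B)\cup\{p\})\neq p\}$ \emph{cannot} be a base at $p$, which produces a fixed closed set $K\not\ni p$ with $f(K\cup S)=p$ for every closed $S\ni p$. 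The clopen base then comes from a maximal-chain argument inside the $\tau_V$-clopen set $f^{-1}(T)$ for a suitable clopen $T$ (Proposition~\ref{proposition-Sequences-v11:1}); the set $K$ is exactly what prevents $p$ from lying in the maximal element of the chain. When $\mathscr{C}[p]\neq\{p\}$, the paper does not work at $p$ at all: it passes to the \emph{other} noncut point $q\in\mathscr{C}[p]$, which is a cut point of $X$, and applies the same maximal-chain argument at $q$ inside $Y=\overline{X\setminus\mathscr{C}[p]}$. Your sketch invokes the up-directed family of complements of clopen neighbourhoods of $p$, but it is not clear how $\tau_V$-convergence of that family to $X$ rules out accumulation along $\mathscr{C}[p]$ or at points of $W\setminus\mathscr{C}[p]$; the paper avoids this issue entirely.

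\textbf{The selection-maximality claim.} Your construction is not well-defined: you ``choose a witnessing clopen $K\in\mathscr{B}$ with $K\cap S=\emptyset$'' that depends on $S$, yet then speak of partitioning $\mathscr{F}(X)$ into $\langle K\rangle$, $\langle X\setminus K\rangle$, $\langle K,X\setminus K\rangle$ as if $K$ were fixed. If $K$ varies with $S$, these are not fixed $\tau_V$-clopen pieces and there is no reason the resulting $h$ is continuous. The paper again uses the single fixed set $K$ obtained above (or, in the nontrivial-component case, a fixed finite $K\subset X\setminus\mathscr{C}[p]$ from Proposition~\ref{proposition-Sequences-v13:1}), then a single fixed clopen $H$ (or $L=H\cup\mathscr{C}[p]$) with $p\in H$ and $f(K\cup S)\in S$ for every $S\in\mathscr{F}(H)$; the $p$-maximal selection is defined by $h(S)=f(S)$ if $S\cap H=\emptyset$ and $h(S)=f(K\cup(S\cap H))$ otherwise. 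The crucial identity $f(K\cup S)=p$ for all $S\ni p$ is what makes $h$ $p$-maximal, and it is precisely the output of the first step---your proposal has no analogue of it.
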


Evidently, the essential case in Theorem \ref{theorem-Sequences-v13:1}
is when $\mathscr{C}[p]$ is not a clopen set, otherwise the property
follow easily from known result and Proposition
\ref{proposition-G-property-v1:1}. Thus, in the rest of this section,
$\mathscr{C}[p]$ will be assumed to be not clopen.\medskip

The next lemma covers the case of $\mathscr{C}[p]=\{p\}$ in Theorem
\ref{theorem-Sequences-v13:1}. 

\begin{lemma}
  \label{lemma-HSAP-v23:1}
  Let $X$ be a space with $\sel[\mathscr{F}(X)]\neq \emptyset$. If $X$
  is totally disconnected at some point
  $p\in X_\Theta\setminus X_\Omega$, then $X$ is zero-dimensional at
  $p$ and $p\in X_\Theta^*$.
\end{lemma}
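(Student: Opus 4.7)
The plan is first to reduce to a well-structured clopen subspace of $X$ where $p$ is the minimum under the induced weak order, and then to prove zero-dimensionality at $p$ via a butterfly-based contradiction and construct the $p$-maximal selection using the resulting clopen base.

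Applying Corollary~\ref{corollary-HSAP-v7:1} contrapositively first: since $p\in X_\Theta\setminus X_\Omega$ and $X$ is totally disconnected at $p$, the point $p$ cannot be butterfly; by Theorem~\ref{theorem-HSAP-v7:2} this means $p$ is not the limit of any sequence in $X\setminus\{p\}$, and $p$ is non-isolated (or else $p\in X_\Omega$). Pick $f\in\sel[\mathscr{F}(X)]$ with $f(X)=p$ and let $\sigma=f\upharpoonright\mathscr{F}_2(X)$. Since $\overline{(\gets,p)_{\leq_\sigma}}\cap\overline{(p,\to)_{\leq_\sigma}}\subset(\gets,p]_{\leq_\sigma}\cap[p,\to)_{\leq_\sigma}=\{p\}$, if both closures contained $p$ then $p$ would be a cut point via the decomposition $X\setminus\{p\}=(\gets,p)_{\leq_\sigma}\cup(p,\to)_{\leq_\sigma}$, hence butterfly, a contradiction. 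Non-isolation forces exactly one closure to contain $p$; without loss of generality $p\in\overline{(p,\to)_{\leq_\sigma}}$, so $L:=(\gets,p)_{\leq_\sigma}$ is clopen and $Y:=X\setminus L=[p,\to)_{\leq_\sigma}$ is a clopen neighborhood of $p$ in which $p$ is the $\leq_\sigma$-minimum. Both conclusions of the lemma being local at $p$ and inheriting across clopen subspaces, I will work inside $Y$.

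To establish zero-dimensionality at $p$ in $Y$, I would argue by contradiction. Suppose some open $U\ni p$ in $Y$ contains no clopen subset of $Y$ around $p$. Total disconnectedness gives clopens $\{C_\lambda\}_{\lambda\in\Lambda}$ of $Y$ with $\bigcap_\lambda C_\lambda=\{p\}$, and by assumption each $C_\lambda\setminus U\ne\emptyset$. Imitating the alternation of Proposition~\ref{proposition-Sequences-v1:2} but over the directed index $\Lambda$ (sequential alternation is unavailable because $p$ has no approaching sequence), I plan to inductively select pairs $(u_\lambda,v_\lambda)\in (U\cap C_\lambda\setminus\{p\})\times(C_\lambda\setminus U)$ realized as $f$-values on witnessing closed sets; the continuity of $f$ at $X$ combined with the up-directed convergence of Proposition~\ref{proposition-G-property-v5:2} should force $\{u_\lambda\}$ to accumulate at $p$ inside $U$ and $\{v_\lambda\}$ to accumulate at $p$ inside $Y\setminus U$. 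The closures $G:=\overline{\{u_\lambda\}}\cup\{p\}$ and $F:=\overline{\{v_\lambda\}}\cup\{p\}$ would then exhibit the butterfly equality $\overline{G\setminus\{p\}}\cap\overline{F\setminus\{p\}}=\{p\}$, contradicting the non-butterfly property of $p$.

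With the clopen base $\{H_\lambda\}$ at $p$ obtained from zero-dimensionality, I would build the $p$-maximal selection $h$ along the lines of the proof of Lemma~\ref{lemma-Sequences-v3:1}: fix a cofinal well-ordered decreasing chain from the base with intersection $\{p\}$, set $h(S)=p$ when $p\in S$, and when $p\notin S$ pick $h(S)$ from $S$ via $f$ restricted to the clopen $X\setminus H$ for the smallest $H$ in the chain disjoint from $S$. The set $\{S:p\notin S\}$ is $\tau_V$-open, giving continuity there by continuity of $f$; continuity at $S_0\ni p$ follows because, as $S_n\to S_0$ with $p\notin S_n$, the witnessing clopen for $S_n$ is forced cofinally smaller in the chain, so $h(S_n)$ collapses into arbitrarily small neighborhoods of $p$. \emph{The hardest part} is the butterfly construction in the previous paragraph: the transfinite iteration over $\Lambda$ must keep both families in their respective half-spaces at every stage, and forcing the $\{v_\lambda\}$ to accumulate at $p$ from outside $U$ is particularly delicate because the ambient topology a priori offers no such approaching structure --- one must exploit the up-directed convergence and the continuity of $f$ to manufacture it.
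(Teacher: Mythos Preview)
Your outline departs substantially from the paper's proof and contains genuine gaps in both of its main steps.

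\textbf{Zero-dimensionality.} The paper never tries to manufacture a butterfly structure from the failure of a clopen base. Instead it proves that, because $p\notin X_\Omega$, the set of open $B\ni p$ with $f((X\setminus B)\cup\{p\})\neq p$ is \emph{not} a local base at $p$ (otherwise Proposition~\ref{proposition-Sequences-v1:1} gives a convergent sequence, hence by Proposition~\ref{proposition-Sequences-v10:1} a cut point, contradicting Theorem~\ref{theorem-HSAP-v7:1}). This yields a closed $K=X\setminus U$ with $p\notin K$ and $f((X\setminus V)\cup\{p\})=p$ for every open $V\subset U$ containing $p$; an up-directed limit argument then upgrades this to $f(K\cup S)=p$ for \emph{every} closed $S\ni p$. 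Zero-dimensionality follows by a maximal-chain argument in the $\tau_V$-clopen set $f^{-1}(T)$ for a suitable clopen $T$ (Proposition~\ref{proposition-Sequences-v11:1}). Your butterfly route, by contrast, asks you to produce families $\{u_\lambda\}\subset U$ and $\{v_\lambda\}\subset Y\setminus U$ whose closures meet exactly in $\{p\}$, and you identify this as the hard point without supplying an argument. The difficulty is real: merely knowing $C_\lambda\setminus U\neq\emptyset$ for all clopen $C_\lambda\ni p$ does not force any net of points in $Y\setminus U$ to accumulate at $p$, and even if both closures contain $p$ they may well meet elsewhere along $\partial U$. Nothing in Propositions~\ref{proposition-G-property-v5:2} or~\ref{proposition-Sequences-v1:2} bridges this, since you have no control over where $f$ lands on the relevant up-directed family.

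\textbf{The $p$-maximal selection.} Your $h$ is not continuous. As written, for $S$ with $p\notin S$ and $S\subset X\setminus H$, the value ``$f$ restricted to $X\setminus H$ applied to $S$'' is just $f(S)$; so $h(S)=f(S)$ when $p\notin S$ and $h(S)=p$ when $p\in S$. Continuity at any $S_0\ni p$ would then force $f(S_0)=p$, i.e.\ $f$ itself would already be $p$-maximal. Moreover, a \emph{well-ordered decreasing} clopen chain with intersection $\{p\}$ need not exist: clopen neighbourhood bases are directed under intersection but need not contain cofinal chains. The paper avoids all of this by using the set $K$ found above: one chooses a clopen $H\ni p$ with $H\subset X\setminus K$ and $f(K\cup T)\in T$ for every $T\in\mathscr{F}(H)$, and sets $h(S)=f(K\cup(S\cap H))$ whenever $S\cap H\neq\emptyset$ (Proposition~\ref{proposition-Sequences-v12:1}). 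The single auxiliary set $K$ is what makes both conclusions work; your argument never isolates an analogue of it.
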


The proof of this lemma is base on the following two simple
observations.

\begin{proposition}
  \label{proposition-Sequences-v11:1}
  Let $f:\mathscr{F}(X)\to X$ be a continuous selection, $p\in X$ with
  $\mathscr{C}[p]=\{p\}$, and $K\in \mathscr{F}(X)$ be such that
  $p\notin K$ and $f(K\cup S)=p$ for every closed set $S\subset X$
  with $p\in S$. Then $X$ has a clopen base at $p$.
\end{proposition}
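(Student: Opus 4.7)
The plan is to prove that every open neighborhood $U$ of $p$ contains a clopen neighborhood of $p$ in $X$; without loss of generality, $U\cap K=\emptyset$.

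First I exploit continuity. Since $f(K\cup\{p\})=p\in U$, continuity of $f$ at $K\cup\{p\}$ furnishes a finite open family $\mathscr{V}$ of $X$ with $K\cup\{p\}\in\langle\mathscr{V}\rangle$ and $f(\langle\mathscr{V}\rangle)\subset U$. Setting $\mathscr{V}_p:=\{V\in\mathscr{V}:p\in V\}$ and $W:=\bigl(\bigcap\mathscr{V}_p\bigr)\cap U$, one has $p\in W\subset U$, and each $V\in\mathscr{V}\setminus\mathscr{V}_p$ meets $K$ (since $V\cap(K\cup\{p\})\neq\emptyset$ and $p\notin V$). A direct check against the definition of $\langle\mathscr{V}\rangle$ yields $K\cup S\in\langle\mathscr{V}\rangle$ for every nonempty closed $S\subset W$, so $f(K\cup S)\in U\cap(K\cup S)=S$ using $K\cap U=\emptyset$. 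Thus $g(S):=f(K\cup S)$ is a continuous selection on the closed-in-$X$ subsets of $W$, and the hypothesis $f(K\cup\{p,y\})=p$ forces $p$ to be the minimum of $W$ under the weak-selection order $\leq_\sigma$ induced by $g$. In parallel, $\mathscr{C}[p]=\{p\}$ together with the continuous weak selection on $X$ arising from $f$ gives $\mathscr{Q}[p]=\{p\}$ by \cite[Theorem 4.1]{gutev-nogura:00b}, so every point of $X\setminus\{p\}$ is separated from $p$ by a clopen subset of $X$.

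The remaining and principal task is to amalgamate these pointwise clopen separators, together with the local selection structure on $W$, into a single clopen $H$ of $X$ with $p\in H\subset W$. This is the main obstacle, because $X\setminus W$ need not be compact and a naive finite-subcover argument therefore does not suffice. My plan is to invoke the hypothesis more forcefully: for each $x\in\overline{W}\setminus\{p\}$, the identity $f(K\cup\{p,x\})=p$ combined with continuity of $f$ at this set supplies a basic Vietoris neighborhood of $K\cup\{p,x\}$ mapped into $U$, whose finite underlying open family serves as a combinatorial anchor for a local clopen separator near $x$. Combined with the $\leq_\sigma$-minimality of $p$ on $W$ and the global clopens furnished by total disconnectedness, this should yield an explicit construction of $H$, with the hypothesis preventing $H$ from escaping $W$ across $\partial W$. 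In the case where $p$ happens to be a limit of a nontrivial sequence in $X\setminus\{p\}$ (for instance supplied by Proposition~\ref{proposition-Sequences-v1:2} applied to a local base of $p$ inside $X\setminus K$, on which the hypothesis forces $f((X\setminus B)\cup\{p\})=p$), Proposition~\ref{proposition-Sequences-v3:2} additionally furnishes a decreasing sequence of clopens of $\overline{\Delta_p}$ shrinking to $\{p\}$, whose sufficiently deep members could then be promoted to clopens of $X$ lying in $W$; the delicate point — again handled via the hypothesis — is ruling out boundary leakage when passing from a subspace clopen to a genuine clopen of $X$.
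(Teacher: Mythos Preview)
Your setup is sound: the local selection $g(S)=f(K\cup S)$ on closed subsets of a small $W\ni p$, the $\leq_\sigma$-minimality of $p$ there, and the fact that $\mathscr{Q}[p]=\{p\}$ are all correct and relevant. But the proof stops precisely where the real work begins. You yourself flag the ``principal task'' of amalgamating the pointwise clopen separators into a single clopen $H$ with $p\in H\subset W$, and then offer only a programmatic sketch (``this should yield an explicit construction of $H$''), together with a side discussion of the sequential case that does not close the argument either. Nothing you have written actually produces $H$; the boundary-leakage issue you raise is exactly the obstruction, and invoking continuity of $f$ at each $K\cup\{p,x\}$ gives you a cover of $\overline{W}\setminus\{p\}$ by local data, but no mechanism to extract a single global clopen from it without compactness.

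The paper's proof bypasses amalgamation entirely by working from the \emph{outside in} rather than the inside out. Set $F=X\setminus U$; then $f(F)\neq p$, so by $\mathscr{Q}[p]=\{p\}$ there is a clopen $T\subset X$ with $f(F)\in T$ and $p\notin T$. The set $f^{-1}(T)\subset\mathscr{F}(X)$ is $\tau_V$-clopen. Take (by Zorn) a maximal chain $\mathscr{M}\subset f^{-1}(T)$ containing $F$, and let $M=\overline{\bigcup\mathscr{M}}$. Up-directedness forces $M\in f^{-1}(T)$ (closedness of $f^{-1}(T)$), while openness of $f^{-1}(T)$ plus maximality of the chain forces $M$ to be open in $X$: any Vietoris basic neighbourhood $\langle\mathscr{V}\rangle$ of $M$ inside $f^{-1}(T)$ has $\bigcup\mathscr{V}=M$, else one could enlarge the chain. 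So $M$ is clopen, contains $K\subset F$, and cannot contain $p$ because $p\in M$ would give $f(M)=f(K\cup M)=p\notin T$. Hence $H=X\setminus M$ is the desired clopen neighbourhood of $p$ inside $U$. The missing idea in your attempt is precisely this maximal-chain trick, which replaces the impossible finite-subcover step by a single application of Zorn's lemma inside the clopen level set $f^{-1}(T)$.
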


\begin{proof}
  We follow the idea in the proof of \cite[Theorem
  1.4]{gutev-nogura:00b}, see also
  \cite{bertacchi-costantini:98}. Take an open set $U\subset X$ with
  $p\in U\subset X\setminus K$, and set $F=X\setminus U$. Since
  $f(F)\neq p$, there exists a clopen set $T\subset X$ with
  $f(F)\in T$ and $p\notin T$. Then $f^{-1}(T)$ is a $\tau_V$-clopen
  subset $\mathscr{F}(X)$. Take a maximal chain
  $\mathscr{M}\subset f^{-1}(T)$ with $F\in\mathscr{M}$. Then
  $M=\overline{\bigcup\mathscr{M}}$ is the maximal element of
  $\mathscr{M}$, and therefore $M$ is clopen in $X$ because
  $f^{-1}(T)$ is $\tau_V$-clopen. Moreover, $K\subset F\subset M$
  because $F\in \mathscr{M}$. Finally, $M$ doesn't contain $p$ because
  $f(M)\neq p$. Indeed, if $p\in M$, then by condition
  $f(M)=f(K\cup M)=p$, which is impossible. Thus, $H=X\setminus M$ is
  a clopen set with $p\in H\subset U$.
\end{proof}

\begin{proposition}
  \label{proposition-Sequences-v12:1}
  Let $f:\mathscr{F}(X)\to X$ be a continuous selection, $p\in X$ with
  $\mathscr{C}[p]=\{p\}$, and $K\in \mathscr{F}(X)$ be such that
  $p\notin K$ and $f(K\cup S)=p$ for every closed set $S\subset X$
  with $p\in S$. Then $p\in X_\Theta^*$.
\end{proposition}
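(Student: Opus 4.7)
The plan is to construct a continuous selection $g\in\sel[\mathscr{F}(X)]$ with $g(S)=p$ for every $S\in\mathscr{F}(X)$ containing $p$, thereby witnessing $p\in X_\Theta^*$. The idea is to combine the given $f$ with the hypothesis-driven augmentation $S\mapsto S\cup K$, but only on sets ``close to'' $p$, namely those meeting a carefully chosen clopen neighborhood of $p$.

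The preparatory step is to produce a clopen $V_p\ni p$ with $V_p\cap K=\emptyset$ such that $f(T\cup K)\in T$ for every nonempty $T\in\mathscr{F}(V_p)$. By Proposition \ref{proposition-Sequences-v11:1}, $X$ has a clopen base at $p$, so at least a clopen $H\ni p$ with $H\cap K=\emptyset$ exists. To refine $H$ to the desired $V_p$, use that $f(K\cup\{p\})=p\in X\setminus K$: by continuity of $f$ there is a basic Vietoris neighborhood $\langle\mathscr{V}\rangle$ of $K\cup\{p\}$ with $f(\langle\mathscr{V}\rangle)\subset X\setminus K$. I would take $\mathscr{V}=\mathscr{V}_K\cup\{W_p\}$ where $W_p\subset H$ is an open neighborhood of $p$ and $\mathscr{V}_K$ is a finite open cover of $K$ each of whose members meets $K$; then pick any clopen $V_p\subset W_p$ with $p\in V_p$. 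For nonempty $T\in\mathscr{F}(V_p)$, a direct check (namely $T\cup K\subset\bigcup\mathscr{V}$, $T\cup K$ meets $W_p$ via $T$, and $T\cup K$ meets each $V\in\mathscr{V}_K$ via $K$) yields $T\cup K\in\langle\mathscr{V}\rangle$, hence $f(T\cup K)\in X\setminus K$, and since $T\subset V_p\subset X\setminus K$, $f(T\cup K)\in(T\cup K)\setminus K=T$.

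With such $V_p$, define
\[
g(S)=\begin{cases} f((S\cap V_p)\cup K) & \text{if } S\cap V_p\neq\emptyset,\\ f(S) & \text{if } S\cap V_p=\emptyset. \end{cases}
\]
Then $g$ is a selection by the preparatory step, and if $p\in S$ we have $p\in S\cap V_p\subset V_p$, so $g(S)=f((S\cap V_p)\cup K)=p$ by the hypothesis on $K$. For continuity, note that $V_p$ being clopen makes $\{S:S\cap V_p\neq\emptyset\}$ and $\mathscr{F}(X\setminus V_p)=\{S:S\cap V_p=\emptyset\}$ into a $\tau_V$-clopen partition of $\mathscr{F}(X)$; on $\mathscr{F}(X\setminus V_p)$ we have $g=f$, which is continuous, and on the other piece $g$ is the composition of the intersection map $S\mapsto S\cap V_p$ (continuous precisely because $V_p$ is clopen) with $T\mapsto T\cup K$ and with $f$. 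Since each clopen piece carries a continuous map, $g$ is continuous on $\mathscr{F}(X)$.

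I expect the main obstacle to be the preparatory step of choosing the clopen $V_p$ so that the $K$-augmented selection $T\mapsto f(T\cup K)$ never jumps into $K$ on $\mathscr{F}(V_p)$. This requires simultaneously exploiting Proposition \ref{proposition-Sequences-v11:1} (zero-dimensionality at $p$) and a careful reading of a Vietoris-basic neighborhood of $K\cup\{p\}$ that isolates the $p$-piece of the cover from the $K$-pieces, so that every $T\cup K$ with $T\subset V_p$ still lies in that neighborhood. Once $V_p$ is secured, the clopen partition of $\mathscr{F}(X)$ makes the continuity verification essentially automatic.
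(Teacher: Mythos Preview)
Your proof is correct and follows essentially the same route as the paper: both first use Proposition~\ref{proposition-Sequences-v11:1} together with continuity of $f$ at $K\cup\{p\}$ to find a clopen neighborhood $H$ of $p$ (your $V_p$) with $H\cap K=\emptyset$ and $f(K\cup T)\in T$ for all $T\in\mathscr{F}(H)$, and then define the $p$-maximal selection by the identical two-case formula using the clopen partition of $\mathscr{F}(X)$ induced by $H$. The only cosmetic difference is that the paper chooses $H$ inside $\bigcap\{V\in\mathscr{V}:p\in V\}$ for an arbitrary basic Vietoris neighborhood $\langle\mathscr{V}\rangle$, whereas you first arrange $\mathscr{V}$ to have a single member $W_p$ at $p$ and then take $V_p\subset W_p$; these are equivalent refinements.
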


\begin{proof}
  Since $f(K\cup\{p\})=p\notin K$, there exists a finite family
  $\mathscr{V}$ of open subsets of $X$ such that
  $K\cup\{p\}\in \langle\mathscr{V}\rangle$ and
  $f(\langle\mathscr{V}\rangle)\subset X\setminus K$. Then by
  Proposition \ref{proposition-Sequences-v11:1}, there exists a clopen
  set $H$ such that $p\in H\subset X\setminus K$ and
  $H\subset \bigcap\{V\in \mathscr{V}: p\in V\}$. Accordingly,
  $f(K\cup S)\in S$ for every $S\in \mathscr{F}(H)$. We can now define
  a continuous selection $h:\mathscr{F}(X)\to X$ by letting for
  $S\in \mathscr{F}(X)$ that
  \[
    h(S)=
    \begin{cases}
      f(S) &\text{if $S\cap H=\emptyset$,\ \ and} \\
      f(K\cup S_H)& \text{if $S_H=S\cap H\neq \emptyset$.}
    \end{cases}
  \]
  This shows that $p$ is selection maximal. Indeed,
  $p\in S\in \mathscr{F}(X)$ implies that $p\in S_H=S\cap H$ and by
  the property of $K$, we have that $h(S)=f(K\cup S_H)= p$.
\end{proof}

\begin{proof}[Proof of Lemma \ref{lemma-HSAP-v23:1}]
  According to Propositions \ref{proposition-Sequences-v11:1} and
  \ref{proposition-Sequences-v12:1}, it suffices to show that there
  exists $K\in \mathscr{F}(X)$ such that
  \begin{equation}
    \label{eq:Sequences-v13:1}
    p\notin K\quad \text{and}\quad \text{$f(K\cup S)=p$,\ \ for every
      $S\in \mathscr{F}(X)$ with $p\in S$.}
  \end{equation}

  To this end, let $\mathscr{O}$ be the collection of all open subsets
  containing $p$, and $\mathscr{B}\subset \mathscr{O}$ be that one of
  those $B\in \mathscr{O}$ for which
  $f((X\setminus B)\cup \{p\})\neq p$. If $\mathscr{B}$ is a local
  base at $p$, then by Proposition \ref{proposition-Sequences-v1:1},
  $X\setminus\{p\}$ contains a sequence convergent to $p$. Hence, by
  Proposition \ref{proposition-Sequences-v10:1}, $p$ must be a cut
  point of $X$. However, by Theorem \ref{theorem-HSAP-v7:1}, this is
  impossible because $p\notin X_\Omega$. Accordingly, there exists
  $U\in \mathscr{O}$ such that $K=X\setminus U\neq \emptyset$ and
  $\mathscr{V}=\{V\in \mathscr{O}:V\subset U\}$ doesn't contain any
  member of $\mathscr{B}$, namely  $f((X\setminus V)\cup\{p\})=p$ for
  every $V\in \mathscr{V}$. To see that this $K$ is as required, take
  a closed set $S\subset X$ with $p\in S$, and set
  $\mathscr{L}=\{S\setminus V: V\in \mathscr{V}\}$. Then
  $X\setminus (K\cup L)\in \mathscr{V}$, $L\in \mathscr{L}$, and
  therefore $f(K\cup L\cup \{p\})=p$ for every $L\in
  \mathscr{L}$. Moreover, by Proposition
  \ref{proposition-G-property-v5:2},
  $\mathscr{H}=\{K\cup L\cup \{p\}: L\in \mathscr{L}\}$ is
  $\tau_V$-convergent to
  $\overline{\bigcup\mathscr{H}}=\overline{K\cup S}=K\cup S$ being an
  up-directed cover of $K\cup S$. Since $f$ is continuous,
  this implies that $f(K\cup S)=p$ which shows
  \eqref{eq:Sequences-v13:1}.
\end{proof}

The other case of Theorem \ref{theorem-Sequences-v13:1} is covered
by the following lemma. 

\begin{lemma}
  \label{lemma-HSAP-v23:2}
  Let $X$ be a space with $\sel[\mathscr{F}(X)]\neq \emptyset$, and
  $p\in X_\Theta\setminus X_\Omega$ be such that
  $\mathscr{C}[p]\neq\{p\}$.  Then $\mathscr{C}[p]$ has a clopen base
  and $p\in X_\Theta^*$.
\end{lemma}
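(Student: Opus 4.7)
The plan is to push the argument of Lemma \ref{lemma-HSAP-v23:1} up from the point level to the component level. After fixing $f \in \sel[\mathscr{F}(X)]$ with $f(X) = p$ and setting $C = \mathscr{C}[p]$, the central task is to produce a closed set $K \subset X \setminus C$ satisfying $f(K \cup S) = p$ for every closed $S \subset X$ with $p \in S$. From such a $K$, I plan to read off the clopen base for $C$ via a maximal-$\tau_V$-chain argument in the spirit of Proposition \ref{proposition-Sequences-v11:1} (with the clopen separator $T$ chosen using the coincidence of components and quasi-components in spaces with continuous weak selections, \cite[Theorem 4.1]{gutev-nogura:00b}, which allows separating $f(F)$ from each point of $C$ and then cutting down along a maximal chain), and to derive $p \in X_\Theta^*$ via the construction in Proposition \ref{proposition-Sequences-v12:1} applied componentwise.

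To locate $K$, I first fix a compatible linear order on $C$ using Proposition \ref{proposition-G-property-v5:1} together with \cite[Lemma 7.2]{michael:51} and \cite[Corollary 2.7]{gutev:07a}: the order $\leq$ on $C$ has $p = \min_\leq C$, and $\cut(C)$ is open in $X$. Letting $\mathscr{N}$ be the filter of open neighborhoods of $C$ in $X$ and $\mathscr{N}_* = \{B \in \mathscr{N} : f((X \setminus B) \cup \{p\}) \neq p\}$, I split on whether $\mathscr{N}_*$ is cofinal in $\mathscr{N}$. If not, then any $U \in \mathscr{N} \setminus \mathscr{N}_*$ gives $K = X \setminus U$ with $f(K \cup \{p\}) = p$, and the up-directed limit argument at the end of the proof of Lemma \ref{lemma-HSAP-v23:1}, applied through Proposition \ref{proposition-G-property-v5:2}, upgrades this to $f(K \cup S) = p$ for all closed $S$ containing $p$.

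The main obstacle is to exclude the cofinality of $\mathscr{N}_*$ in $\mathscr{N}$. The plan is to argue by contradiction, imitating Proposition \ref{proposition-Sequences-v1:1}: a cofinal $\mathscr{N}_*$ supplies a decreasing chain $\{B_n\} \subset \mathscr{N}_*$ whose witnesses $T_n = (X \setminus B_n) \cup \overline{B_{n+1}}$ converge to $X$ in $\tau_V$ (Proposition \ref{proposition-G-property-v3:1}), yielding a sequence $\{f(T_n)\} \subset X \setminus C$ with $f(T_n) \to p$. The key difficulty, and the real heart of the lemma, is converting this into a contradiction with $p \notin X_\Omega$. The approach is to combine $\{f(T_n)\}$ with the order structure of $C$, which supplies points of $C \setminus \{p\}$ arbitrarily close to $p$, to exhibit a $p$-cut of $X$: the natural candidate is $U' = C \setminus \{p\}$ paired against a refined piece of $X \setminus C$ containing the $f(T_n)$. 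Ensuring $\overline{U'} \cap \overline{V'} = \{p\}$ requires refining the choice of $\{B_n\}$ using continuity of $f$ and the orderability of $C$ so that accumulation of $\{f(T_n)\}$ inside $C$ is confined to $p$. Once a cut is exhibited, $p \in \cut(X)$, and Theorem \ref{theorem-HSAP-v7:1} forces $p \in X_\Omega$, contradicting the hypothesis and closing the proof.
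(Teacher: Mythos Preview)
Your proposal has two genuine gaps, one in each branch of your dichotomy.

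\textbf{The ``not cofinal'' branch.} The up-directed limit argument from Lemma~\ref{lemma-HSAP-v23:1} does not upgrade $f(K\cup\{p\})=p$ to $f(K\cup S)=p$ for \emph{all} closed $S\ni p$. In that lemma the base $\mathscr{V}$ consists of neighbourhoods of the point $p$, so $\bigcap\mathscr{V}=\{p\}$ and the up-directed family $\{K\cup(S\setminus V)\cup\{p\}\}$ has closure $K\cup S$. In your setting $\mathscr{V}\subset\mathscr{N}$ consists of neighbourhoods of the whole component $C$, so $\bigcap\mathscr{V}\supset C$ and the up-directed family has closure contained in $K\cup\overline{S\setminus C}\cup\{p\}$, which misses $S\cap C\setminus\{p\}$. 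Concretely, for $S=C$ the family degenerates to $\{K\cup\{p\}\}$ and you learn nothing about $f(K\cup C)$. Handling the $C$-part of $S$ requires showing that $T\mapsto f(K\cup T)$ is the $p$-maximal selection on $\mathscr{F}(C)$, and this is exactly the content of Proposition~\ref{proposition-Sequences-v13:1} (via the min/max dichotomy for continuous selections on a connected weakly orderable space), which you do not invoke.

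\textbf{The ``cofinal'' branch.} Even granting a sequence $\{f(T_n)\}\subset X\setminus C$ converging to $p$, your candidate $p$-cut $(C\setminus\{p\},\,X\setminus C)$ fails in general: since $\cut(C)$ is open in $X$, the boundary $C\cap\overline{X\setminus C}$ lies in $\noncut(C)\subset\{p,q\}$, and nothing prevents $q\in\overline{X\setminus C}$. In fact the paper's own setup begins by choosing exactly such a $q$, so the intersection $\overline{C\setminus\{p\}}\cap\overline{X\setminus C}$ typically equals $\{p,q\}$, not $\{p\}$. Your ``refining the choice of $\{B_n\}$'' cannot fix this, because the obstruction is the location of $q$, not the behaviour of the sequence.

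\textbf{What the paper does instead.} The paper avoids both problems by shifting attention from $p$ to the \emph{other} noncut point $q$ of $C$, which is a genuine cut point of $X$ with $Y=\overline{X\setminus C}$ totally disconnected at $q$. Proposition~\ref{proposition-Sequences-v13:1} produces a finite $K\subset X\setminus C$ such that $T\mapsto f(K\cup S\cup T)$ is the $p$-maximal (equivalently $q$-minimal) selection on $\mathscr{F}(Z)$ for \emph{every} closed $S\subset Y$; specialising to $T=\{q\}$ gives $f(K\cup S\cup\{q\})=q$, and Proposition~\ref{proposition-Sequences-v11:1} applied at $q$ in $Y$ yields the clopen base for $C$. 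The $p$-maximality on $\mathscr{F}(Z)$ is precisely what fills the hole in your first branch, and working at $q$ rather than $p$ is what makes the total-disconnectedness hypothesis of Proposition~\ref{proposition-Sequences-v11:1} available.
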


In this lemma, according to Theorem \ref{theorem-HSAP-v7:1} (see also
Proposition \ref{proposition-G-property-v5:1}), $p$ is both a noncut
point of $X$ and a noncut point of $\mathscr{C}[p]$. Since
$\mathscr{C}[p]$ is not clopen in $X$, it has another noncut point
$q\in \mathscr{C}[p]$ defined by the property that
$q\in \overline{X\setminus \mathscr{C}[p]}$. In particular, $q$ is a
cut point of $X$. Thus, in this case, $U=X\setminus \mathscr{C}[p]$
and $V=\mathscr{C}[p]\setminus\{q\}$ form a $q$-cut of $X$ such that
$Y=\overline{U}$ is totally disconnected at $q$ and
$Z=\overline{V}=\mathscr{C}[p]$. In this setting, we have the
following properties of $Y$ and $Z$. 

\begin{proposition}
  \label{proposition-Sequences-v13:1}
  There exists a nonempty finite set $K\subset U$ such that for every
  closed set $S\subset Y$, the map $f_{(K,S)}(T)=f(K\cup S\cup T)$,
  $T\in \mathscr{F}(Z)$, is a continuous $p$-maximal selection for
  $\mathscr{F}(Z)$.
\end{proposition}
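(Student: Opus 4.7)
The plan is to combine the ideas from the proof of Lemma \ref{lemma-HSAP-v23:1} applied to the space $Y$ at the totally disconnected point $q$ with the weakly orderable arc structure of $Z=\mathscr{C}[p]$, in which $p$ and $q$ appear as the two noncut endpoints. The $p$-maximality will follow from Michael's dichotomy (\cite[Lemma 7.3]{michael:51}) for continuous selections on $\mathscr{F}(Z)$ once enough of $f$'s behaviour has been pinned down.

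First I would anchor $f|_Y(Y)=q$. Since $Z$ is connected and $f(X)=f(Y\cup Z)=p\in Z$, Proposition \ref{proposition-G-property-v1:1} gives $f(Y\cup T)\in\mathscr{C}[p]=Z$ for every $T\in\mathscr{F}(Z)$; specialising to $T=\{q\}$ (using $q\in Y$, so $Y\cup\{q\}=Y$) yields $f(Y)\in Y\cap Z=\{q\}$, i.e.\ $f(Y)=q$. Thus $q\in Y_\Theta$ via $f|_{\mathscr{F}(Y)}$, and $f(Y\cup T)\in\{q\}\cup T$ for every $T\in\mathscr{F}(Z)$.

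Next I mimic the proof of Lemma \ref{lemma-HSAP-v23:1}. Consider the collection $\mathscr{B}$ of open neighbourhoods $B$ of $q$ in $Y$ with $f((Y\setminus B)\cup\{q\})\neq q$. I would show $\mathscr{B}$ is not a local base at $q$ in $Y$: otherwise Propositions \ref{proposition-Sequences-v1:1} and \ref{proposition-Sequences-v10:1} (applied to $Y$ and $f|_{\mathscr{F}(Y)}$) would make $q$ a cut point of $Y$, so Theorem \ref{theorem-HSAP-v7:1} applied to $Y$ would force $q\in Y_\Omega$; the resulting countable clopen base at $q$ in some $\overline{W}^Y$ would then, using the $q$-cut $(U,V)$ of $X$ together with the fact that $p$ is likewise a noncut endpoint of the weakly orderable $Z$, transfer to a countable clopen base at $p$ in a suitable $\overline{U_p}\subset Z$ with $\overline{U_p}=U_p\cup\{p\}$, contradicting $p\notin X_\Omega$. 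Hence there is an open $U'\ni q$ in $Y$ with $f((Y\setminus B)\cup\{q\})=q$ for every open $B$ with $q\in B\subset U'$; an up-directed convergence argument via Proposition \ref{proposition-G-property-v5:2}, parallel to the one in the proof of Lemma \ref{lemma-HSAP-v23:1}, extends this to $f((Y\setminus U')\cup S)=q$ for every closed $S\subset Y$ with $q\in S$. Continuity of $f$ at $(Y\setminus U')\cup\{q\}$, combined with the selection of one witness from $Y\setminus U'$ in each member of a suitably chosen finite open cover $\mathscr{W}$ of this set, then produces the required finite $K\subset Y\setminus U'\subset U$.

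Finally I would verify that $f_{(K,S)}$ is a $p$-maximal continuous selection on $\mathscr{F}(Z)$ for every closed $S\subset Y$. Continuity is automatic. The core step is to show $f(K\cup S\cup T)=p$ whenever $p\in T$: the $q$-maximality from the previous step, propagated from $K$ to $K\cup S$ by a further up-directed argument and then combined with the continuity of $f$ at $X=Y\cup Z$, yields this equality. Then Proposition \ref{proposition-G-property-v1:1} (with its $T=K\cup S$ and $D=T$) gives $f_{(K,S)}(T')\in\mathscr{C}[p]=Z$ for every $T'\in\mathscr{F}(Z)$, so $f_{(K,S)}\colon\mathscr{F}(Z)\to Z$ is a continuous selection. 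Since $Z$ is connected and weakly orderable with $p,q$ as its two noncut endpoints (Proposition \ref{proposition-G-property-v5:1}), Michael's dichotomy forces $f_{(K,S)}$ to be either the $\min$ or the $\max$ selection, and $f_{(K,S)}(Z)=p$ identifies it as the endpoint selection at $p$, which is exactly $p$-maximal. The main obstacle I foresee is the $q$-to-$p$ transfer of the countable clopen base in the contradiction step: $p$ and $q$ live on opposite sides of the $q$-cut, so transferring a countable clopen structure at $q$ in $\overline{W}^Y$ into one at $p$ in $\overline{U_p}\subset Z$ requires essential use of the weakly orderable arc structure of $Z$ and the interaction of its endpoint topology with the $q$-cut of $X$.
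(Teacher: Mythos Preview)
There are two genuine gaps. The first is the transfer you flag as ``the main obstacle'': the implication $q\in Y_\Omega\Rightarrow p\in X_\Omega$ simply fails. Since $Y$ is closed in $X$, a countable clopen base at $q$ in some $\overline{W}^{\,Y}\subset Y$ is already one in $X$, so what you would obtain is $q\in X_\Omega$, which says nothing about $p$. No arc structure of $Z$ can carry clopen data from one endpoint to the other: $p$ has a base of connected neighbourhoods inherited from the interval structure of $Z$, so for any open $U_p$ with $\overline{U_p}=U_p\cup\{p\}$ the space $\overline{U_p}$ is connected near $p$ and admits no small clopen sets there. A concrete instance is $X=(\omega+1)\cup_{\omega=0}[0,1]$ with $p=1$ and $q=0$; here $p\in X_\Theta\setminus X_\Omega$ via the $\max$-selection and $\mathscr{C}[p]=[0,1]$, while $q\in Y_\Omega$ trivially since $Y\cong\omega+1$. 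The second gap is in your last paragraph: Proposition~\ref{proposition-G-property-v1:1} gives only $f_{(K,S)}(T)\in Z$, not $f_{(K,S)}(T)\in T$. When $q\in S$ one has $q\in(K\cup S)\cap Z$, so a priori $f_{(K,S)}(T)\in T\cup\{q\}$, and Michael's dichotomy is unavailable until you rule out the value $q$. The paper devotes a separate case analysis (on whether $q\in S$ and whether $S\setminus\{q\}$ is closed) to exactly this point.

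The paper's construction of $K$ is entirely different and avoids both problems. Since $f(X)=p\in V$ with $V$ open, continuity gives a basic $\tau_V$-neighbourhood $\langle\mathscr{W}\rangle\ni X$ with $f(\langle\mathscr{W}\rangle)\subset V$; choosing a finite $K\subset U$ meeting every $W\in\mathscr{W}$ that meets $Y$ forces $K\cup S\cup Z\in\langle\mathscr{W}\rangle$ for each closed $S\subset Y$, so $f(K\cup S\cup Z)\in V$, and then Proposition~\ref{proposition-G-property-v5:1} (applied in the subspace $K\cup S\cup Z$) pins this value to $p$. The hypothesis $p\notin X_\Omega$ is never used, and no detour through $q$-maximality in $Y$ is needed.
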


\begin{proof}
  Since $f(X)=p\in V$, there exists a finite open cover $\mathscr{W}$
  of $X$ with $X\in \langle\mathscr{W}\rangle$ and
  $f(\langle\mathscr{W}\rangle)\subset V$. Take a finite set
  $K\subset U$ such that $K\cap W\neq \emptyset$ for every
  $W\in \mathscr{W}$ with $W\cap Y\neq\emptyset$. Then $K$ has the
  property that
  \begin{equation}
    \label{eq:Sequences-v14:1}
    f(K\cup S\cup Z)=p,\quad \text{for every closed set $S\subset Y$.}
  \end{equation}
  Indeed, in this case, $f(K\cup S\cup Z)\in V\subset \mathscr{C}[p]$
  because $K\cup S\cup Z\in \langle\mathscr{W}\rangle$. Hence, by
  Proposition \ref{proposition-G-property-v5:1}, $f(K\cup S\cup Z)=p$
  because $q\notin V$.\smallskip

  For a closed subset $S\subset Y$, the map $f_{(K,S)}$ is continuous
  and by Proposition~\ref{proposition-G-property-v1:1},
  $f_{(K,S)}(T)= f(K\cup S\cup T)\in Z$ for every
  $T\in \mathscr{F}(Z)$. Hence, by \eqref{eq:Sequences-v14:1} and
  \cite[Lemmas 7.2 and 7.3]{michael:51}, it only suffices to show that
  $f_{(K,S)}$ is a selection for $\mathscr{F}(Z)$. If
  $T\in \mathscr{F}(Z)$ and $q\notin S$, then $f_{(K,S)}(T)\in T$
  because $K\cup S\subset U\subset X\setminus Z$. Otherwise, if
  $q\in S$, we set $F=S\setminus\{q\}$ and distinguish the following
  two cases:\smallskip

  \noindent \textbf{(i)} If $F$ is a closed set, as remarked above,
  $f_{(K,F)}$ is a selection for $\mathscr{F}(Z)$.  Therefore, by
  \eqref{eq:Sequences-v14:1}, $f_{(K,F)}$ is `$q$-minimal' in the
  sense that $f_{(K,F)}(T)= q$ precisely when $T= \{q\}$ because
  $q\in\noncut(\mathscr{C}[p])$, see \cite[Lemma 7.3]{michael:51} and
  \cite[Corollary 2.7]{gutev:07a}. In other words,
  $f_{(K,S)}(T)=f_{(K,F)}(T\cup \{q\})\in T$ for every
  $T\in \mathscr{F}(Z)$. \smallskip

  \noindent \textbf{(ii)} If $F$ is not closed, by the previous
  case, $f_{(K,E)}(T)=f(K\cup E\cup T)\in T$ for every
  $E\in \Sigma(F)$, see \eqref{eq:HSAP-v21:4}. Moreover, by
  Proposition \ref{proposition-G-property-v5:2},
  $\mathscr{H}=\left\{K\cup E: E\in\Sigma(F)\right\}$ is an
  up-directed family which is $\tau_V$-convergent to $K\cup
  S$. Accordingly,
  \[
    f_{(K,S)}(T)=f(K\cup S\cup T) =\lim_{H\in \mathscr{H}}f\left(H\cup
      T\right)\in \overline{T}=T. \qedhere
  \]
\end{proof}

\begin{proof}[Proof of Lemma \ref{lemma-HSAP-v23:2}]
  According to Proposition \ref{proposition-Sequences-v13:1}, there
  exists a nonempty finite set $K\subset U=X\setminus \mathscr{C}[p]$
  such that $f(K\cup S\cup \{q\})=q$ for every closed set
  $S\subset Y=\overline{U}$. Since $Y$ is totally disconnected at $q$,
  it follows from Proposition~\ref{proposition-Sequences-v11:1} that
  $q$ has a clopen base in $Y$. This implies that $\mathscr{C}[p]$ has
  a clopen base in $X$. To show the remaining part of this lemma, as
  in the proof of Proposition \ref{proposition-Sequences-v12:1}, take
  a clopen set $H\subset Y$ such that $q\in H\subset Y\setminus K$ and
  $f(K\cup S)\in S$ for every $S\in \mathscr{F}(H)$. Then $L=H\cup Z$
  is a clopen subset of $X$ with the same property. Indeed, take any
  $S\in \mathscr{F}(L)$. If $S\subset Y$, then $S\subset H$ and
  therefore $f(K\cup S)\in S$. If $S\setminus Y\neq \emptyset$, set
  $D=S\cap Y$ and $T=S\cap Z$. Then by
  Proposition~\ref{proposition-Sequences-v13:1},
  $f(K\cup S)=f(K\cup D\cup T)\in T\subset S$. Hence, just like
  before, we can define a continuous $p$-maximal selection
  $h:\mathscr{F}(X)\to X$ by
  \[
    h(S)=
    \begin{cases}
      f(S) &\text{if $S\cap G=\emptyset$,\ \ and} \\
      f(K\cup S_L) &\text{if $S_L=S\cap L\neq \emptyset$.}
    \end{cases}
  \]
  Indeed, if $p\in S\in \mathscr{F}(X)$, then $p\in S_L=S\cap
  L$. Moreover, $D_L=S_L\cap Y$ is closed in $Y$ and
  $p\in T_L=S_L\cap Z\in \mathscr{F}(Z)$.  According to Proposition
  \ref{proposition-Sequences-v13:1},
  $h(S)=f(K\cup S_L)=f(K\cup D_L\cup T_L)=p$.
\end{proof}

%%%\bibliographystyle{amsplain-ab}
%%%\bibliography{gutev}
\end{document}